\title{Quasi-Monte Carlo and Discontinuous Galerkin\thanks{%
Received... Accepted... Published online on... Recommended by....
}}
\definecolor{darkred}{RGB}{139,0,0}
\definecolor{darkgreen}{RGB}{0,100,0}
\definecolor{darkmagenta}{RGB}{139,0,139}
\definecolor{orange}{RGB}{207,83,0}
\definecolor{brown}{RGB}{139,69,19}
\newcommand{\bsgamma}{{\boldsymbol{\gamma}}}
\newcommand{\bsnu}{{\boldsymbol{\nu}}}
\newcommand{\bsb}{{\boldsymbol{b}}}
\newcommand{\bsm}{{\boldsymbol{m}}}
\newcommand{\bst}{{\boldsymbol{t}}}
\newcommand{\bsx}{{\boldsymbol{x}}}
\newcommand{\bsy}{{\boldsymbol{y}}}
\newcommand{\setu}{\mathrm{\mathfrak{u}}}
\renewcommand*{\vec}{\boldsymbol}
\newcommand*{\elem}{\ensuremath{T}}
\newcommand*{\face}{\ensuremath{F}}
\newcommand*{\setElem}{\ensuremath{\mathcal T_h}}
\newcommand*{\setFace}{\ensuremath{\mathcal F}}
\newcommand*{\dgBil}{\ensuremath{B_h}}
\newcommand*{\poly}{\ensuremath{\mathcal P}}
\newcommand*{\avg}[1]{\ensuremath{\{\![ #1 ]\!\}}}
\newcommand*{\jump}[1]{\ensuremath{[\![ #1 ]\!]}}
\newcommand*{\normal}{\ensuremath{\vec n}}
\newcommand*{\IR}{\ensuremath{\mathbb R}}
\newcommand*{\dx}{\ensuremath{\textup d\vec x}}
\newcommand*{\ds}{\ensuremath{\textup d\sigma}}
\newcommand*{\vecm}{\ensuremath{\vec m}}
\newcommand*{\corr}[1]{{#1}}
\newcommand\circlearound[1]{\scalebox{0.8}{\tikz[baseline]\node[circle,inner sep=1pt,draw,anchor=base] {\textup{#1}};}}
\pgfplotsset{compat=1.15} 
\author{Vesa Kaarnioja\footnotemark[2]
        \and Andreas Rupp\footnotemark[3]}
\shorttitle{QUASI-MONTE CARLO AND DISCONTINUOUS GALERKIN} 
\begin{document}

\maketitle

\renewcommand{\thefootnote}{\fnsymbol{footnote}}

\footnotetext[2]{Department of Mathematics and Computer Science, Free University of Berlin, Arnimallee~6, DE-14195 Berlin ({\tt vesa.kaarnioja@fu-berlin.de}).}
\footnotetext[3]{Department of Mathematics, Faculty of Mathematics and Computer Science, Saarland University, DE-66123 Saarbr\"{u}cken ({\tt andreas.rupp@uni-saarland.de}).}

\begin{abstract}
In this study, we consider the development of tailored quasi-Monte Carlo (QMC) cubatures for non-conforming discontinuous Galerkin (DG) approximations of elliptic partial differential equations (PDEs) with random coefficients. We consider both the affine and uniform and the lognormal models for the input random field, and investigate the use of QMC cubatures to approximate the expected value of the PDE response subject to input uncertainty. In particular, we prove that the resulting QMC convergence rate for DG approximations behaves in the same way as if continuous finite elements were chosen. Notably, the parametric regularity bounds for DG, which are developed in this work, are also useful for other methods such as sparse grids. Numerical results underline our analytical findings.
\end{abstract}

\begin{keywords}
diffusion equation, discontinuous Galerkin, quasi-Monte Carlo, random coefficient
\end{keywords}

\begin{AMS}
65C05, 65N30
\end{AMS}

\section{Introduction}

In this paper, we consider the design of tailored quasi-Monte Carlo (QMC) rules for a class of non-conforming discontinuous Galerkin (DG) methods used to discretize an elliptic partial differential equation (PDE) with a random diffusion coefficient. More precisely, we investigate the class of interior penalty DG (IPDG) methods and we show that there exist constructible rank-1 lattice rules satisfying rigorous error bounds independently of the stochastic dimension of the problem.

Let $D\subset\mathbb R^d$, $d\in\{1,2,3\}$, be a physical domain with Lipschitz boundary and let $(\Omega,\Gamma,\mathbb P)$ be a probability space. Let us consider the problem of finding $u\!:D\times \Omega\to\mathbb R$ that satisfies 
\begin{align}
\begin{cases}
-\nabla\cdot(a(\bsx,\omega)\nabla u(\bsx,\omega))=f(\bsx),&\bsx\in D,\\
u(\bsx,\omega)=0,&\bsx\in\partial D,
\end{cases}\label{eq:uncertainpde}
\end{align}
for almost all $\omega\in\Omega$. Many studies in uncertainty quantification typically consider one of the following two models for the input random field:

\begin{enumerate}
\item the affine and uniform model (cf., e.g.,~\cite{cds10,spodpaper14,dicklegiaschwab,ghs18,schwabgittelson,kss12,kssmultilevel,schwab13})
$$
a(\bsx,\omega)=a_0(\bsx)+\sum_{j=1}^\infty y_j(\omega)\psi_j(\bsx),\quad \bsx\in D,~\omega\in\Omega,
$$
where $y_1,y_2,\ldots$ are independently and identically distributed random variables with uniform distribution on $[-\tfrac12,\tfrac12]$;
\item the lognormal model (cf., e.g.,~\cite{gittelson,log,log2,log3,log4,log5,schwabtodor})
$$
a(\bsx,\omega)=a_0(\bsx)\exp\bigg(\sum_{j=1}^\infty y_j(\omega)\psi_j(\bsx)\bigg),\quad \bsx\in D,~\omega\in\Omega,
$$
where $y_1,y_2,\ldots$ are independently and identically distributed \corr{standard normal} random variables and $a_0(\bsx)>0$.
\end{enumerate}
Here, $(\psi_j)_{j\geq 1}$ are assumed to be real-valued $L^\infty$ functions on $D$ such that
$$
\sum_{j=1}^\infty \|\psi_j\|_{L^\infty}<\infty.
$$

In the study of uncertainty quantification for PDEs, a natural quantity to investigate is the expected value
$$
\mathbb E[u]:=\int_{\Omega}u(\cdot,\omega)\,{\rm d}\mathbb P(\omega).
$$
In practice, the problem needs to be discretized in several ways before it is possible to approximate this quantity numerically. The infinite-dimensional input random field is first replaced by a finite-dimensional one, meaning that we end up analyzing a dimensionally-truncated PDE solution $u_s$. The dimensionally-truncated solution of the PDE is replaced by a finite element solution $u_{s,h}$, and the high-dimensional integral needs to be approximated by an $s$-dimensional cubature rule with $n$ cubature nodes $Q_{s,n}$. The overall error is then comprised of \emph{dimension truncation error}, \emph{finite element error}, and \emph{cubature error} as
$$
\|\mathbb E[u]-Q_{s,n}(u_{s,h})\|\leq \|\mathbb E[u-u_s]\|+\|\mathbb E[u_s-u_{s,h}]\|+\|\mathbb E[u_{s,h}]-Q_{s,n}(u_{s,h})\|,
$$
for some appropriately chosen norm $\|\cdot\|$. We focus on the cubature error, and discuss the spatial discretization error briefly in Section~\ref{sec:brief}. \corr{We remark that the order of the last two error contributions---the finite element error and cubature error, respectively---can be flipped when the diffusion coefficient is represented using a sequence of bounded random variables (cf., e.g., \cite{kuonuyenssurvey,qmc4pde2,kss12}), but applying this tactic for the lognormal model would require making additional assumptions to ensure that the QMC approximation of $\mathbb E[\|u_{s,h}\|_{H^2}]$ is convergent. Hence, in this paper, we perform the QMC analysis for the dimensionally-truncated, finite element solution of the parametric PDE.}

In recent years, QMC methods have been demonstrated to be very effective at approximating the response statistics of PDE problems such as~\eqref{eq:uncertainpde}. The success of modern QMC theory for uncertainty quantification can largely be attributed to the introduction of weighted spaces (in the sense of Sloan and Wo\'zniakowski~\cite{sloanwoz} and Hickernell~\cite{hick96}) used to analyze the cubature error. While traditional QMC methods can also exhibit faster-than-Monte Carlo convergence rates, as a general rule the convergence rates of classical QMC methods have exponential dependence on the dimension of the integration problem in the unweighted setting~\cite{sloanjoe94}. However, QMC error bounds developed in weighted spaces for integrands satisfying certain moderate smoothness and anisotropy conditions can be shown to be independent of the stochastic dimension while retaining their faster-than-Monte Carlo convergence rates. For a detailed survey about the development of QMC methods over the past few decades, see~\cite{dksacta}.

QMC methods are well-suited for large scale computations since they are trivial to parallelize. Other advantages include the possibility to generate the cubature point sets for certain classes of QMC methods, such as randomly shifted rank-1 lattice rules or digital nets, on the fly. Randomization of QMC rules also enables the computation of practical error estimates. These features make QMC methods ideal for heavy-duty uncertainty quantification compared to regular Monte Carlo methods (slow convergence rate) or sparse grids (not easily parallelizable). QMC methods have been applied successfully to many problems such as Bayesian inverse problems~\cite{bayes2,bayes3,bayes4,bayes}, spectral eigenvalue problems~\cite{spectral}, optimization under uncertainty~\cite{ocpde,guth22,mlqmc}, the Schr\"odinger equation~\cite{schrodinger}, the wave equation~\cite{wave}, problems arising in quantum physics~\cite{quantum}, and various others.

QMC error bounds in the weighted space setting have the following generic form
$$
\text{root mean squared error}\leq C_{\boldsymbol\gamma,n,s}\|G\|_{s,\boldsymbol\gamma},
$$
where $G\colon [0,1]^s\to\mathbb R$ denotes the integrand, $\boldsymbol\gamma=(\gamma_{\setu})_{\setu\subseteq\{1,\ldots,s\}}$ denotes a collection of positive weights, $C_{\boldsymbol\gamma,n,s}>0$ denotes the ({\em shift-averaged}) {\em worst case error} depending on the weights $\boldsymbol\gamma$, the number of QMC nodes $n$, and the dimension $s$, while $\|\cdot\|_{s,\boldsymbol\gamma}$ denotes the norm of an appropriately chosen weighted Sobolev space. The analysis of the cubature error for PDE uncertainty quantification typically consists of the following steps: first, the parametric regularity of the integrand needs to be estimated by deriving {\em a priori} upper bounds on the higher-order partial derivatives $\partial^{\bsnu}G$ with respect to the (uncertain) variables. In consequence, these {\em a priori} upper bounds can then be used to estimate the weighted Sobolev norm $\|G\|_{s,\boldsymbol\gamma}$ of the integrand. Finally, the weights $\boldsymbol\gamma$ in the QMC error bound serve essentially as free parameters and can be chosen freely to optimize the cubature error bound. Typically, one chooses the weights in such a way that $\corr{C_{\boldsymbol\gamma,n,s}\|G\|_{s,\boldsymbol\gamma}}\leq C_{\boldsymbol\gamma,n}<\infty$ for some constant $C_{\boldsymbol\gamma,n}>0$ not dependent on $s$, ensuring that the error rate of the resulting cubature rule can be bounded independently of the dimension $s$. From a practical point of view, the weights that minimize the error bound can be used as inputs to the so-called \emph{component-by-component (CBC) algorithm}~\cite{cbc1,cbc2} which produces a QMC rule satisfying the theoretically derived error bound.  For a detailed description of this procedure, the reader is referred to the surveys~\cite{kuonuyenssurvey,qmc4pde2}. There also exist freely available software~\cite{qmcpy,qmc4pde} for the construction of QMC point sets.

A common feature of virtually all of the aforementioned QMC literature related to PDE uncertainty quantification is that the QMC rule is designed for the \emph{non-discretized} PDE problem~\eqref{eq:uncertainpde} whereas, in practical computations, one only has access to a discrete approximation of the PDE system. Of course, as long as one uses a conforming finite element (FE) method to perform the discretization of the PDE, \corr{the parametric regularity of the non-discretized PDE naturally transfers} to the discretized PDE problem and the theory remains sound even for the discretized PDE system. However, in many cases it is preferable to consider \emph{non-conforming FE methods} such as DG methods (e.g., see the books \cite{PietroE12,DolejsiF15,HesthavenW07,KnabnerA21,Riviere08} for a comprehensive overview and \cite{ArnoldBCM02} for a unified analysis framework for elliptic PDEs using DG) to solve PDEs with uncertain coefficients. Using DG methods, the inter-element continuity constraint of conforming FE methods is dropped and concatenations of arbitrary local polynomials with support in only one element can be used as test and trial functions. This property resembles the finite volume (FV) approach, in which the solution per element is approximated by a local constant. As a consequence, DG methods have similar parallelization properties as FV (but do not need any reconstructions to achieve higher order). Moreover, the problem of hanging nodes is intrinsically bypassed in the DG framework allowing for more general meshes than conforming FE, see \cite{CangianiGH14}. Beyond that, DG directly supports $hp$ refinement, where both the mesh and the local degree of approximating polynomials can be adapted locally, see \cite{CangianiGH14,CastilloCSS02,PerugiaS02}. Last, DG comes with an intuitive notion of local mass conservation, since it is based on inter-element fluxes (and mass conservative in an element itself). These advantages have made DG methods a popular tool for many applications. In particular, they have been used in the field of computational fluid dynamics (see \cite{Shu13} for a survey---notable mentions are DG methods for incompressible Navier--Stokes \cite{CockburnKS05} and shallow-water equations \cite{AizingerD07}), multicomponent reactive transport \cite{SunW06}, and many more.
However, since discontinuous Galerkin methods are non-conforming, we cannot directly apply the existing QMC theory. 

This paper tries to bridge this theoretical gap. It is structured as follows. Notations and preliminaries are introduced in Section~\ref{sec:notations}. Section~\ref{SEC:qmcc} describes randomly shifted lattice rules, and Section~\ref{SEC:conforming_fem} gives a brief overview over the analysis of conforming FE methods. DG in the QMC framework is presented in Section~\ref{SEC:dg} and the corresponding parametric regularity analysis is considered in Section~\ref{sec:parametric}. Numerical results, which confirm our theoretical findings, are given in Section~\ref{SEC:numerics}, while a short conclusion wraps up our exposition.

\section{Notations, preliminaries, and assumptions}\label{sec:notations}
Let expression $\mathscr F:=\{\bsnu\in\mathbb N_0^{\mathbb N}\mid |{\rm supp}(\bsnu)|<\infty\}$ denote the set of all multi-indices with finite support ${\rm supp}(\bsnu):=\{j\in\mathbb N\mid \nu_j\neq 0\}$. Moreover, we denote the order of a multi-index $\bsnu=(\nu_1,\nu_2,\ldots)\in\mathscr F$ by
$$
|\bsnu|:=\sum_{j\in{\rm supp}(\bsnu)}\nu_j.
$$
We will also use the shorthand notation $\{1:s\}:=\{1,\ldots,s\}$.

The relevant function space for our PDE problem will be $V:=H_0^1(D)$. Its dual space $V'=H^{-1}(D)$ is understood with respect to the pivot space $H:=L^2(D)$, which we identify with its own dual. We define
$$
\|v\|_V:=\|\nabla v\|_{H}.
$$

Let $U$ be a space of parameters, and otherwise for the moment not specified. We consider the variational formulation of the PDE problem~\eqref{eq:uncertainpde}: for all $\bsy\in U$, find $u(\cdot,\bsy)\in V$ such that
\begin{align}
\int_D a(\bsx,\bsy)\nabla u(\bsx,\bsy)\cdot \nabla v(\bsx)\,{\rm d}\bsx=\langle f, v\rangle\quad\text{for all}~v\in V,\label{eq:weak}
\end{align}
where $f\in V'$ and $\langle\cdot,\cdot\rangle\corr{\,:=\langle \cdot,\cdot\rangle_{V',V}}$ denotes the duality pairing \corr{of $V$ and $V'$}.
\paragraph{Uniform and affine setting} In the uniform and affine setting, we fix the set of parameters to be $U:=[-\frac12,\frac12]^{\mathbb N}$, define
$$
a(\bsx,\bsy):=a_0(\bsx)+\sum_{j=1}^\infty y_j \psi_j(\bsx),\quad \bsx\in D,~\bsy\in U,
$$
and assume the following:
\begin{itemize}
\item[(U1)] $a_0\in L^\infty(D)$ and $\sum_{j=1}^\infty \|\psi_j\|_{L^\infty}<\infty$;
\item[(U2)] there exist $a_{\min},a_{\max}>0$ such that $a_{\min}\leq a(\bsx,\bsy)\leq a_{\max}$ for all $\bsx\in D$ and $\bsy\in U$;
\item[(U3)] $\sum_{j=1}^\infty \|\psi_j\|_{L^\infty}^p<\infty$ for some $0<p<1$;
\item[(U4)] $a_0\in W^{1,\infty}(D)$ and $\sum_{j=1}^\infty \|\psi_j\|_{W^{1,\infty}}<\infty$, where
$$
\|v\|_{W^{1,\infty}}:=\max\{\|v\|_{L^\infty},\|\nabla v\|_{L^\infty}\};
$$
\item[(U5)] the spatial domain $D\subset\mathbb R^d$, $d\in\{1,2,3\}$, is a convex and bounded polyhedron.
\end{itemize}
We consider the expected value
$$
\mathbb E[u]=\int_U u(\cdot,\bsy)\,{\rm d}\bsy.
$$

\paragraph{Lognormal setting} In the lognormal setting, we fix the set of (admissible) parameters to be $U:=U_{\boldsymbol\beta}:=\{\bsy\in\mathbb R^{\mathbb N}:\sum_{j\geq 1}\beta_j|y_j|<\infty\}$, where $\boldsymbol\beta:=(\beta_j)_{j\geq 1}$ and $\beta_j:=\|\psi_j\|_{L^\infty}$. \corr{We model the input coefficient as}
\begin{align}
a(\bsx,\bsy):=a_0(\bsx)\exp\bigg(\sum_{j=1}^\infty y_j\psi_j(\bsx)\bigg),\quad \bsx\in D,~\bsy\in U,\label{eq:lognormalmodel}
\end{align}
and assume the following:
\begin{itemize}
\item[(L1)] $a_0\in L^\infty(D)$ and $\sum_{j=1}^\infty \|\psi_j\|_{L^\infty}<\infty$;
\item[\corr{(L2)}]\corr{there hold $\min_{\bsx\in\overline{D}}a_0(\bsx)>0$ and $\max_{\bsx\in\overline{D}}a_0(\bsx)<\infty$, and we define
\begin{align*}
&a_{\min}(\bsy):=\big(\min_{\bsx\in \overline{D}}a_0(\bsx)\big)\exp\bigg(-\sum_{j=1}^\infty |y_j|b_j\bigg),\\
&a_{\max}(\bsy):=\big(\max_{\bsx\in \overline{D}}a_0(\bsx)\big)\exp\bigg(\sum_{j=1}^\infty |y_j|b_j\bigg),
\end{align*}
for all $\bsy\in U_{\boldsymbol\beta}$.}
\item[(L3)] $\sum_{j=1}^\infty \|\psi_j\|_{L^\infty}^p<\infty$ for some $0<p<1$;
\item[(L4)] $a_0\in W^{1,\infty}(D)$ and $\sum_{j=1}^\infty \|\psi_j\|_{W^{1,\infty}}<\infty$;
\item[(L5)] the spatial domain $D\subset\mathbb R^d$, $d\in\{1,2,3\}$, is a convex and bounded polyhedron.
\end{itemize}
\corr{Since the set $U$ has full measure with respect to the infinite-dimensional product Gaussian measure $\mu_G=\bigotimes_{j=1}^\infty\mathcal N(0,1)$, we can identify~\cite[Lem.~2.28]{schwabgittelson}}%
$$
\mathbb E[u]=\int_{\mathbb R^{\mathbb N}} u(\cdot,\bsy)\,\mu_G({\rm d}\bsy)=\int_{U} u(\cdot,\bsy)\,\mu_G({\rm d}\bsy).%
$$
\corr{Thus it is sufficient to constrain our parametric regularity analysis of $u(\cdot,\bsy)$ to the parameters $\bsy\in U$ for which the parametric PDE problem is well-defined.}

{\em Remark.} (i) Let $(\Omega,\Gamma,\mathbb P)$ be a probability space. Every square-integrable Gaussian random field $Z\!:D\times\Omega\to\mathbb R$ with a continuous, symmetric, and positive definite covariance function admits a Karhunen--Lo\`eve expansion
$$
Z(\bsx,\omega)=\sum_{j=1}^\infty \sqrt{\lambda_j}y_j(\omega)\psi_j(\bsx),\quad y_j\overset{\rm i.i.d.}{\sim}\mathcal N(0,1),
$$
where $\{\psi_j\}_{j=1}^\infty$ constitutes an orthonormal basis for $L^2(D)$ and $\lambda_1\geq\lambda_2\geq\cdots\geq 0$ with $\lim_{j\to\infty}\lambda_j=0$. However, not all random fields with such expansions are Gaussian. Hence we concentrate on a broader class of random fields characterized by expansions of the form~\eqref{eq:lognormalmodel} which we refer to as the ``lognormal model'' for brevity.

\corr{(ii) Assumption (L2) together with~\cite[Lem.~3.10]{gittelson} ensure that both $a_{\max}(\cdot)$ and $1/a_{\min}(\cdot)$ are integrable with respect to the Gaussian product measure $\bigotimes_{j\geq 1}\mathcal N(0,1)$.}%
\section{Quasi-Monte Carlo cubature}\label{SEC:qmcc}
Since QMC methods can only be applied to finite-dimensional integrals and the analysis of the dimension truncation error is independent of the chosen spatial discretization scheme (cf., e.g.,~\cite{Gantner,kuonuyenssurvey}), we restrict our analysis to the finite-dimensional setting in what follows.
\subsection{Uniform and affine model}
We are interested in solving an $s$-dimensional integration problem
\begin{align*}
I_s(G):=\int_{[-1/2,1/2]^s}G(\bsy)\,{\rm d}\bsy.
\end{align*}
As our QMC estimator of $I_s(G)$, we take
$$
Q_{\rm ran}(G):=\frac{1}{nR}\sum_{i=1}^n \sum_{r=1}^R G(\{\bst_i+\boldsymbol\Delta^{(r)}\}-\tfrac{\boldsymbol 1}{\boldsymbol 2}),
$$
where \corr{$\boldsymbol\Delta^{(1)},\ldots,\boldsymbol\Delta^{(R)}$ are i.i.d.~realizations of a random variable $\boldsymbol\Delta\sim U([0,1]^s)$}, $\{\cdot\}$ denotes the componentwise fractional part, $\frac{\mathbf 1}{\mathbf 2}:=[\tfrac12,\ldots,\tfrac12]^{\rm T}\in\mathbb R^s$, and
\begin{align}
\boldsymbol{t}_i:=\bigg\{\frac{i\boldsymbol{z}}{n}\bigg\}\quad\text{for}~i\in\{1,\ldots,n\},\label{eq:qmcnode}
\end{align}
where $\boldsymbol{z}\in\{0,\ldots,n-1\}^s$ is called the \emph{generating vector}.%

Let us assume that the integrand $G$ belongs to a weighted, unanchored Sobolev space with bounded first order mixed partial derivatives, the norm of which is given by
$$
\|G\|_{s,\boldsymbol \gamma}^2:=\sum_{\setu\subseteq\{1:s\}}\frac{1}{\gamma_{\setu}}\int_{[-1/2,1/2]^{|\setu|}}\bigg(\int_{[-1/2,1/2]^{s-|\setu|}}\frac{\partial^{|\setu|}}{\partial \bsy_{\setu}}G(\bsy)\,{\rm d}\bsy_{-\setu}\bigg)^2\,{\rm d}\bsy_{\setu}.
$$
Here $\bsgamma:=(\gamma_{\setu})_{\setu\subseteq\{1:s\}}$ is a collection of positive weights, ${\rm d}\bsy_{\mathfrak u}:=\prod_{j\in\mathfrak u}{\rm d}y_j$, and ${\rm d}\bsy_{-\mathfrak u}:=\prod_{j\in\{1:s\}\setminus\mathfrak u}{\rm d}y_j$ for $\mathfrak u\subseteq\{1:s\}.$

The following well-known result shows that it is possible to construct a generating vector satisfying a rigorous error bound using a CBC algorithm~\cite{cbc2} (see also~\cite{dksacta}).

\begin{lemma}[cf.~{\cite[Thm.~5.1]{kuonuyenssurvey}}]\label{lemma:affineqmc}
Let $G$ belong to the weighted unanchored Sobolev space over $[0,1]^s$ with weights $\boldsymbol{\gamma}=(\gamma_{\setu})_{\setu\subseteq\{1:s\}}$. A randomly shifted lattice rule with $n=2^m$ points in $s$ dimensions can be constructed by  a CBC algorithm such that for $R$ independent random shifts and for all $\lambda\in (1/2,1]$, it holds that
$$
\sqrt{\mathbb{E}_{\boldsymbol{\Delta}}[|I_{s}(G)-Q_{\rm ran}(G)|^2]}\leq\frac{1}{\sqrt{R}}\bigg(\frac{2}{n}\sum_{\emptyset\neq \setu\subseteq\{1:s\}}\gamma_{\setu}^\lambda\varrho(\lambda)^{|\setu|}\bigg)^{1/(2\lambda)}\|G\|_{s,\bsgamma},
$$
where
$$
\varrho(\lambda):=\frac{2\zeta(2\lambda)}{(2\pi^2)^\lambda}.
$$
Here, $\zeta(x):=\sum_{k=1}^\infty k^{-x}$ is the \emph{Riemann zeta function} for $x>1$ and $\mathbb E_{\boldsymbol\Delta}[\cdot]$ denotes the expected value with respect to uniformly distributed random shift over $[0,1]^s$.
\end{lemma}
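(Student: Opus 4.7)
The plan is to invoke the Kuo--Nuyens analysis of randomly shifted rank-1 lattice rules in the weighted unanchored Sobolev reproducing-kernel Hilbert space on $[0,1]^s$, after absorbing the shift $-\frac{\mathbf{1}}{\mathbf{2}}$ into the integrand. Setting $\widetilde G(\bsy):=G(\bsy-\frac{\mathbf{1}}{\mathbf{2}})$ for $\bsy\in[0,1]^s$, a change of variables gives $\|\widetilde G\|_{s,\bsgamma}^{[0,1]^s}=\|G\|_{s,\bsgamma}$, $I_s(G)=\int_{[0,1]^s}\widetilde G(\bsy)\,\mathrm{d}\bsy$, and $Q_{\rm ran}(G)=\frac{1}{nR}\sum_{i,r}\widetilde G(\{\bst_i+\boldsymbol\Delta^{(r)}\})$; hence the stated RMS error coincides with that of the standard randomly shifted lattice rule applied to $\widetilde G$ on $[0,1]^s$, and it suffices to establish the bound there.

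In the $[0,1]^s$ setting, independence of the $R$ shifts together with the fact that each single-shift estimator is unbiased reduces the RMS error to $R^{-1/2}$ times the shift-averaged worst-case error $e_{n,s,\bsgamma}^{\rm sh}(\bsz)$ of the underlying rank-1 lattice rule multiplied by $\|\widetilde G\|_{s,\bsgamma}$. A Parseval identity on the reproducing kernel yields the closed form
$$[e_{n,s,\bsgamma}^{\rm sh}(\bsz)]^2=\sum_{\emptyset\neq\setu\subseteq\{1:s\}}\gamma_{\setu}\,\frac{1}{n}\sum_{i=1}^n\prod_{j\in\setu}B_2(\{iz_j/n\}),$$
where $B_2$ is the second Bernoulli polynomial, whose Fourier expansion $B_2(\{x\})=(2\pi^2)^{-1}\sum_{h\neq 0}e^{2\pi i h x}/h^2$ is responsible for the factors $(2\pi^2)^{-\lambda}$ and $\zeta(2\lambda)$ once $[e_{n,s,\bsgamma}^{\rm sh}(\bsz)]^2$ is raised to a power $\lambda\in(1/2,1]$ via the subadditivity inequality $(\sum_k a_k)^\lambda\leq\sum_k a_k^\lambda$. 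Feeding this into a CBC induction on the dimension, where at step $d$ the component $z_d$ is chosen to minimize $[e_{n,d,\bsgamma}^{\rm sh}(z_1,\dots,z_d)]^{2\lambda}$ over $z_d\in\{1,\dots,n-1\}$ coprime to $n$, and repeatedly using \emph{minimum $\leq$ average}, unrolls to
$$[e_{n,s,\bsgamma}^{\rm sh}(\bsz)]^{2\lambda}\leq\frac{1}{\varphi(n)}\sum_{\emptyset\neq\setu\subseteq\{1:s\}}\gamma_{\setu}^\lambda\,\varrho(\lambda)^{|\setu|}.$$
For $n=2^m$ one has $\varphi(n)=n/2$, producing the $2/n$ prefactor; raising to $1/(2\lambda)$ and combining with the first paragraph then gives the lemma.

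The hard part is the CBC existence step itself: bounding the minimum over admissible $z_d$ by the average via orthogonality of characters on $(\mathbb Z/n\mathbb Z)^\times$, while preserving the multiplicative factorization across dimensions needed to unroll the induction cleanly. The window $\lambda\in(1/2,1]$ is pinned precisely by convergence of $\zeta(2\lambda)$ at the lower end and validity of the subadditivity inequality at the upper end. Because the entire argument on $[0,1]^s$ is standard and recorded verbatim in \cite[Thm.~5.1]{kuonuyenssurvey}, the proof ultimately reduces to citing that reference and justifying the translation of the domain; the genuine contribution of this paper will instead lie in verifying the hypotheses of this lemma for the DG-based integrand through the parametric regularity analysis of Section~\ref{sec:parametric}.
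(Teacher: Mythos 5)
Your proposal is correct and matches the paper's treatment: the paper gives no proof of this lemma, citing \cite[Thm.~5.1]{kuonuyenssurvey} for the standard CBC/shift-averaged worst-case error analysis, which is exactly the argument you outline. Your handling of the only paper-specific wrinkle --- translating the $[-\tfrac12,\tfrac12]^s$ formulation to the standard $[0,1]^s$ setting so that the norm, the integral, and the randomly shifted lattice estimator correspond --- is also sound.
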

\subsection{Lognormal model}
We are interested in solving an $s$-dimensional integration problem
\begin{align*}
I_s^{\varphi}(G):=\int_{\mathbb{R}^s}G(\bsy)\prod_{j=1}^s\varphi(y_j)\,{\rm d}\bsy=\int_{[0,1]^s}G(\Phi^{-1}(\boldsymbol{w}))\,{\rm d}\boldsymbol{w},%
\end{align*}
where $\varphi\!:\mathbb{R}\to\mathbb{R}_+$ is the probability density function of the standard normal distribution, i.e.,
\begin{align}
\varphi(x):=\frac{1}{\sqrt{2\pi}}\exp\bigg(-\frac12 x^2\bigg),\quad x\in\mathbb{R},\label{eq:normalpdf}
\end{align}
and $\Phi^{-1}$ denotes the inverse cumulative distribution function of $\prod_{j=1}^s\varphi(y_j)$.%

As our QMC estimator of $I_s(G)$, we take
$$
Q_{\rm ran}^{\varphi}(G):=\frac{1}{nR}\sum_{i=1}^n\sum_{r=1}^R G(\Phi^{-1}(\{\bst_i+\boldsymbol\Delta^{(r)}\})),
$$
where \corr{$\boldsymbol\Delta^{(1)},\ldots,\boldsymbol\Delta^{(R)}$ are i.i.d.~realizations of a random variable $\boldsymbol\Delta\sim U([0,1]^s)$} and the nodes $\boldsymbol{t}_1,\ldots,\boldsymbol{t}_n\in[0,1]^s$ are defined as in~\eqref{eq:qmcnode}.

Let us assume that the integrand $G$ belongs to a special weighted Sobolev space in $\mathbb{R}^s$ with bounded first order mixed partial derivatives, the norm of which is given by
\begin{multline*}
\|G\|_{s,\bsgamma}^2:= \\ 
\sum_{\setu\subseteq\{1:s\}}\!\frac{1}{\gamma_{\setu}}\!\int_{\mathbb{R}^{|\setu|}}\!\bigg(\int_{\mathbb{R}^{s-|\setu|}}\frac{\partial^{|\setu|}}{\partial \bsy_{\setu}}G(\bsy)\bigg(\prod_{j\in\{1:s\}\setminus\setu}\!\varphi(y_j)\bigg)\,{\rm d}\bsy_{-\setu}\bigg)^2\bigg(\prod_{j\in \setu}\varpi_j^2(y_j)\bigg)\,{\rm d}\bsy_{\setu},
\end{multline*}
where $\bsgamma:=(\gamma_{\setu})_{\setu\subseteq\{1:s\}}$ is a collection of positive weights and we define the weight functions
\begin{align}
\varpi_j(x):=\exp(-\alpha_j\,|x|),\quad \alpha_j>0,\quad j\in\mathbb{Z}_+.\label{eq:varpi}
\end{align}
\corr{The parameters $\alpha_j>0$ in~\eqref{eq:varpi} are for the moment arbitrary, but we will end up fixing their values later on in order to obtain dimension-independent QMC convergence for the lognormal model problem.}

In analogy to the affine and uniform setting, the following well-known result gives an error bound for a QMC rule based on a generating vector constructed using the CBC algorithm.
\begin{lemma}[cf.~{\cite{nicholskuo} and \cite[Thm.~15]{log}}]
Let $G$ belong to the weighted function space over $\mathbb{R}^s$ with weights $\boldsymbol{\gamma}=(\gamma_{\setu})_{\setu\subseteq\{1:s\}}$, let $\varphi\!:\mathbb{R}\to\mathbb{R}_+$ be the standard normal density defined by~\eqref{eq:normalpdf}, and let the weight functions $\varpi_j$ be defined by~\eqref{eq:varpi}. A randomly shifted lattice rule with $n=2^m$ points in $s$ dimensions can be constructed by  a CBC algorithm such that for $R$ independent random shifts and for all $\lambda\in (1/2,1]$, it holds that
$$
\sqrt{\mathbb{E}_{\boldsymbol{\Delta}}[|I_{s}^{\varphi}(G)-Q_{\rm ran}^{\varphi}(G)|^2]}\leq\frac{1}{\sqrt{R}}\bigg(\frac{2}{n}\sum_{\emptyset\neq \setu\subseteq\{1:s\}}\gamma_{\setu}^\lambda\prod_{j\in\setu}\varrho_j(\lambda)\bigg)^{1/(2\lambda)}\|G\|_{s,\bsgamma},
$$
where
$$
\varrho_j(\lambda):=2\bigg(\frac{\sqrt{2\pi}\exp(\alpha_j^2/\eta_{\ast})}{\pi^{2-2\eta_{\ast}}(1-\eta_{\ast})\eta_{\ast}}\bigg)^{\lambda}\zeta(\lambda+\tfrac12),\quad \eta_{\ast}:=\frac{2\lambda-1}{4\lambda}.
$$
Here, $\zeta(x):=\sum_{k=1}^\infty k^{-x}$ is the \emph{Riemann zeta function} for $x>1$ and $\mathbb E_{\boldsymbol\Delta}[\cdot]$ denotes the expected value with respect to uniformly distributed random shift over $[0,1]^s$.
\end{lemma}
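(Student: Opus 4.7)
The plan is to follow the standard RKHS framework for randomly shifted lattice rules adapted to the weighted Gaussian setting of Nichols--Kuo. The weighted function space $\|\cdot\|_{s,\bsgamma}$ is a reproducing kernel Hilbert space (RKHS) whose kernel splits as a product over the coordinates, with one-dimensional factors built from the standard Gaussian density $\varphi$ and the weight functions $\varpi_j$. First I would express the randomly shifted integration error via Parseval's identity in this RKHS: for a single random shift $\boldsymbol\Delta\sim U([0,1]^s)$, the mean-square error equals $e_{n,s,\bsgamma}^2(\bsz)\,\|G\|_{s,\bsgamma}^2$, where $e_{n,s,\bsgamma}(\bsz)$ is the \emph{shift-averaged worst-case error} of the lattice rule with generating vector $\bsz$. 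Using $R$ i.i.d.\ shifts and averaging reduces the variance by a factor of $R$, producing the $1/\sqrt{R}$ prefactor.

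Next I would derive a closed-form expression for $e_{n,s,\bsgamma}^2(\bsz)$ in terms of the product kernel. After integrating out the random shift, one obtains a sum over $\emptyset\neq\setu\subseteq\{1:s\}$ whose $\setu$-th term is $\gamma_\setu \prod_{j\in\setu}\theta_j(\{t_{i,j}-t_{\ell,j}\})$ for a one-dimensional ``shift-averaged kernel'' $\theta_j$. The component-by-component construction then chooses the coordinates of $\bsz$ greedily to minimize, at each step, the shift-averaged squared worst-case error. Applying the Jensen/power-mean inequality with exponent $\lambda\in(1/2,1]$ (the classical Kuo trick), one bounds $e_{n,s,\bsgamma}^{2\lambda}(\bsz)$ by the average over admissible generators, which is dominated by
\[
\frac{2}{n}\sum_{\emptyset\neq\setu\subseteq\{1:s\}}\gamma_\setu^{\lambda}\prod_{j\in\setu}\varrho_j(\lambda),
\]
provided that each one-dimensional constant $\varrho_j(\lambda)$ majorizes the $\lambda$-th moment of $\theta_j$ restricted to nonzero Fourier modes.

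The most delicate step, and the main obstacle, is the explicit estimation of $\varrho_j(\lambda)$. Here one decomposes the relevant one-dimensional integral into a Fourier series: writing $\theta_j$ in terms of $\varphi$ and $\varpi_j(x)=\exp(-\alpha_j|x|)$, one obtains a sum of the form $\sum_{k\neq 0}|k|^{-(2\lambda+1)}\int_{\mathbb R}\varphi(y)^{2}\varpi_j(y)^{-2}\,\mathrm d y$ times a bounded coefficient, where the exponent $2\lambda+1$ gives rise to the $\zeta(\lambda+\tfrac12)$ factor once a Hölder inequality with parameters $(\eta_\ast,1-\eta_\ast)$ is applied; the specific choice $\eta_\ast=(2\lambda-1)/(4\lambda)$ is the one that balances both exponents and minimizes the resulting bound. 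Evaluating the Gaussian integral by completing the square produces a factor $\sqrt{2\pi}\exp(\alpha_j^2/\eta_\ast)$, while the Hölder split generates the factors $\pi^{2\eta_\ast-2}(1-\eta_\ast)^{-1}\eta_\ast^{-1}$; combining these yields exactly the stated form of $\varrho_j(\lambda)$.

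Finally, I would assemble these pieces: the CBC-constructed generator satisfies the stated $\lambda$-th-power bound, raising both sides to the power $1/(2\lambda)$ gives the root-mean-square error estimate, and multiplying by $\|G\|_{s,\bsgamma}$ and $1/\sqrt{R}$ produces the claimed inequality. Since this essentially reproduces the arguments of~\cite{nicholskuo} and~\cite[Thm.~15]{log}, the novelty lies only in verifying that the weight functions $\varpi_j$ with arbitrary $\alpha_j>0$ are admissible; the $\alpha_j$ remain free parameters that will be tuned later to achieve dimension-independent convergence for the lognormal PDE problem.
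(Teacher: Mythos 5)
The paper itself contains no proof of this lemma: it is quoted, up to notation, from the cited references (Nichols--Kuo and \cite[Thm.~15]{log}), so there is no internal argument to compare against. Your sketch reconstructs exactly the argument of those references, and its architecture is the standard and correct one: the shift-averaged worst-case error in the product RKHS, the factor $1/\sqrt{R}$ from averaging over $R$ i.i.d.\ shifts, the CBC construction combined with Jensen's inequality with exponent $\lambda\in(1/2,1]$, and the reduction to a one-dimensional estimate that must be majorized by $\varrho_j(\lambda)$.

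However, your account of that one delicate step is garbled. A sum of the form $\sum_{k\neq 0}|k|^{-(2\lambda+1)}$ would produce $\zeta(2\lambda+1)$, and no H\"older inequality applied \emph{after} summing over $k$ can turn it into $\zeta(\lambda+\tfrac12)$. The actual mechanism is the reverse order: one first bounds the Fourier coefficients of the one-dimensional shift-averaged kernel by $C_{j,\eta}\,|k|^{-(2-2\eta)}$ for an arbitrary $\eta\in(0,\tfrac12)$, where the H\"older-type split and the Gaussian integral (completing the square) produce the constant $C_{j,\eta}\sim \sqrt{2\pi}\exp(\alpha_j^2/\eta)/\bigl(\pi^{2-2\eta}(1-\eta)\eta\bigr)$; Jensen's inequality then raises this to the power $\lambda$, giving $\sum_{k\neq0}\lvert\widehat\theta_j(k)\rvert^{\lambda}\leq 2\,C_{j,\eta}^{\lambda}\,\zeta\bigl(\lambda(2-2\eta)\bigr)$, and the choice $\eta=\eta_{\ast}=(2\lambda-1)/(4\lambda)$ makes $\lambda(2-2\eta_{\ast})=\lambda+\tfrac12>1$, which is precisely where $\zeta(\lambda+\tfrac12)$ comes from; $\eta_{\ast}$ is picked to guarantee convergence of the zeta series for every $\lambda\in(1/2,1]$ rather than as an exact minimizer. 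With this correction (and noting that the single-shift mean-square error is \emph{bounded by}, not equal to, $e_{n,s,\bsgamma}^2\|G\|_{s,\bsgamma}^2$ for a fixed $G$), your outline coincides with the proof in the cited sources.
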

\section{Conforming FE methods}\label{SEC:conforming_fem}
In the case of conforming finite element discretizations for the elliptic PDE problem, it is enough to analyze the parametric regularity of the continuous problem. The parametric regularity results are inherited by the conforming FE solution. Below, we briefly recap the main parametric regularity results for the affine and uniform as well as the lognormal model.

Let us consider the weak formulation~\eqref{eq:weak}: for all $\bsy\in U$, find $u(\cdot,\bsy)\in V$ such that
\begin{align}
\int_D a(\bsx,\bsy)\nabla u(\bsx,\bsy)\cdot \nabla v(\bsx)\,{\rm d}\bsx=\langle f, v\rangle\quad\text{for all}~v\in V.\label{eq:bilinform}
\end{align}
If we use a conforming FE method to solve the equation above, the variational formulation stays the same
$$
\int_D a(\bsx,\bsy)\nabla u_h(\bsx,\bsy)\cdot \nabla v_h(\bsx)\,{\rm d}\bsx=\langle f, v_h \rangle\quad\text{for all}~v_h\in V_h,
$$
where the test and trial space $V_h$ is a finite dimensional subspace of $V$.

\paragraph{Uniform and affine setting}
\corr{It was shown in~\cite{cds10}} that the solution to~\eqref{eq:bilinform} satisfies the regularity bounds
\begin{equation}\label{EQ:reg_bound_fe_aff}
\|\partial_{\bsy}^{\bsnu}u(\cdot,\bsy)\|_V\leq |\bsnu|!\,\bsb^{\bsnu}\,\frac{\|f\|_{V'}}{a_{\rm min}},\quad \bsb:=(b_j)_{j\geq 1},~b_j:=\frac{\|\psi_j\|_{L^\infty}}{a_{\min}},
\end{equation}
for all finitely supported multi-indices $\bsnu\in\mathcal F$ and $\bsy\in U$. \corr{Since $V_h$ is now a subspace of $V$, it follows}  that the same regularity bound holds for the dimensionally-truncated FE approximation:
\begin{equation}\label{EQ:reg_bound_fe_log}
\|\partial_{\bsy}^{\bsnu}u_{s,h}(\cdot,\bsy)\|_V\leq |\bsnu|!\,\bsb^{\bsnu}\,\frac{\|f\|_{V'}}{a_{\rm min}}.
\end{equation}

\paragraph{Lognormal setting}
The corresponding regularity bound for the lognormal case is~\corr{\cite{log}}
$$
\|\partial_{\bsy}^{\bsnu}u(\cdot,\bsy)\|_V\leq \frac{|\bsnu|!}{(\log 2)^{|\bsnu|}}\,{\boldsymbol\beta}^{\bsnu}\,\frac{\|f\|_{V'}}{a_{\rm min}(\bsy)},\quad \boldsymbol\beta:=(\beta_j)_{j\geq 1},~\beta_j:=\|\psi_j\|_{L^\infty},
$$
for all finitely supported multi-indices $\bsnu\in\mathcal F$ and $\bsy\in U_{\boldsymbol\beta}$. Note that $u$ can be again replaced by a dimensionally-truncated, conforming FE approximation $u_{s,h}$ in the above inequality.

\section{Discontinuous Galerkin FE methods}\label{SEC:dg}
In this section, we analyze the DG method provided that $D \subset \mathbb R^d$ is a convex polyhedron, $\emptyset \neq U \subset \mathbb R^\mathbb N$, $f \in L^2(D)$, and that there are
\begin{equation*}
 0 < a_{\min}(\vec y) = \min_{x \in \bar D} a(\vec x, \vec y) \quad \text{ and } \quad a_{\max}(\vec y) = \max_{x \in \bar D} a(\vec x, \vec y) < \infty.
\end{equation*}
This setup holds for both the uniform and affine and the lognormal cases.

In DG, the idea is to modify the variational formulation, and we can no longer exploit conformity to obtain regularity bounds for the DG solutions to~\eqref{eq:uncertainpde}. This means that the regularity analysis must be rewritten for the DG system, which will also affect the choice of the optimal QMC rule for the computation of \corr{the expectation of the stochastic response.}

The most prominent DG method for model problem \eqref{eq:uncertainpde} is given by the family of interior penalty (IPDG) methods. For these methods, we define
\begin{equation*}
 V_h = \{ v_h \in L^2(D) \colon v_h|_\elem \in \poly_k(\elem) \; \forall \elem \in \setElem \},
\end{equation*}
where $\poly_k(\elem)$ denotes the space of (multivariate) polynomials of (total) degree at most $k$ on $\elem$, and $\setElem$ is supposed to be a member of a shape and contact regular mesh sequence in the sense of \cite[Def.\ 1.38]{PietroE12}. Basically, this says that the mesh elements are neither distorted nor have arbitrary small angles. Purely for the sake of simplicity in the notation, we assume the mesh to be geometrically conforming (have no hanging nodes). Importantly, we do not require our mesh to be simplicial or quadrilateral, etc. In fact, DG methods can easily be used on significantly more general meshes. Note that $V_h$ does not impose any inter-element continuity constraints. Thus, we also define the mean value $\avg{\cdot}$ and the jump $\jump{\cdot}$ on a common face $\face$ of elements $\elem^+ \neq \elem^- \in \setElem$ by
\begin{equation*}
 \avg{ v_h } = \frac{1}{2} \left( v_h|_{\elem^+} + v_h|_{\elem^-} \right) \qquad \text{ and } \qquad \jump{v_h} = v_h|_{\elem^+} \normal_{\elem^+} + v_h|_{\elem^-} \normal_{\elem^-},
\end{equation*}
where $\normal_\elem$ denotes the outward pointing unit normal with respect to $\elem$. Importantly, the jump turns a scalar function to a vector function. Likewise, the average of a vector quantity is defined component wise, and if $\face \subset \partial D$, we set
\begin{equation*}
 \avg{ v_h } = v_h|_{\elem} \qquad \text{ and } \qquad \jump{v_h} = v_h|_{\elem} \normal,
\end{equation*}
where $\elem$ refers to the unique $\elem \in \setElem$ with $\face \subset \partial \elem$ and $\normal$ is the outward pointing unit normal of $D$. The set of all faces is denoted by $\setFace$.

Using these definitions, we can define the DG bilinear form $\dgBil\colon V_h \times V_h \to \IR$ as
\begin{align}
 \dgBil&(\vec y; u_h(\cdot, \vec y), v_h)\notag \\
 = & \!\sum_{\elem \in \setElem} \!\int_\elem \!a(\vec x, \vec y) \nabla u_h(\vec x, \vec y) \!\cdot\! \nabla v_h(\vec x) \dx \!+\! \!\sum_{\face \in \setFace} \!\int_\face \!\bigg[ \theta \avg{a(\vec x, \vec y) \nabla v_h(\vec x)} \! \cdot\!\! \jump{u_h(\vec x, \vec y)}\notag \\
 & \qquad - \avg{a(\vec x, \vec y) \nabla u_h(\vec x, \vec y)} \cdot \jump{v_h(\vec x)} + \frac{\eta(\vec y)}{h_\face} \jump{ u_h(\vec x, \vec y) } \cdot \jump{v_h(x)} \bigg] \ds\notag \\
 = & \sum_{\elem \in \setElem} \int_\elem a(\vec x, \vec y) \nabla u_h(\vec x, \vec y) \cdot \nabla v_h(\vec x) \dx\label{eq:refertome} \\
 & \quad + \sum_{\elem \in \setElem} \int_{\partial \elem} \bigg[ \theta \avg{a(\vec x, \vec y) \nabla v_h(\vec x)}  \cdot u_h(\vec x, \vec y) \normal_\elem(\vec x)\notag \\
 & \qquad - \frac{1}{2} a(\vec x, \vec y) \nabla u_h(\vec x, \vec y) \cdot \jump{v_h(\vec x)} + \frac{\eta(\vec y)}{h_\face} u_h(\vec x, \vec y) \normal_\elem(\vec x) \cdot \jump{v_h(x)} \bigg] \ds\notag
\end{align}
and the corresponding linear form $\mathcal L\colon V_h \to \IR$ as
\begin{equation}\label{EQ:rhs}
 \mathcal L(v_h) = \int_D f(\vec x) v_h(\vec x) \dx.
\end{equation}
Here, $h_\face$ denotes the diameter of the considered face. The parameter $\theta \in \{-1,0,+1\}$ is chosen to be equal to $1$ for the non-symmetric IP method (NIPDG), equal to $0$ for the incomplete IP method (IIPDG), and equal to $-1$ for the symmetric IP method (SIPDG). The role of parameter $\eta(\bsy)$ is to penalize jumps in $u_h$ across interfaces, and thereby stabilize the method. It is well known that the NIPDG method is stable for parameters $\eta(\vec y) \ge 0$, while IIPDG and SIPDG methods are stable \corr{if $\eta(\vec y) \ge \bar \eta(\vec y) > 0$. Here, $\bar \eta(\vec y)$ depends, among others, on the infimum and supremum of $a(\cdot, \vec y)$, see Lemma \ref{LEM:dg_stability}. Thus, we cannot expect to choose $\eta$ independent of $\vec y$ for the IIPDG and SIPDG methods in the lognormal case.} The NIPDG method for $\eta(\vec y) = 0$ is attributed to Oden, Babu\v ska, and Baumann \cite{OdenBB98}, but we exclude it from our considerations (since its analysis is more involved), and assume that $\eta(\vec y) > 0$ is used. For a detailed discussion of these methods, the reader is referred to the books \cite{PietroE12,DolejsiF15,HesthavenW07,KnabnerA21,Riviere08} and the references therein.
 
\subsection{Preliminaries}\label{sec:stability}
Notably, $V_h \not\subset H^1(D)$, and thus we have to define a norm that allows for a generalization of Poincar\'e's inequality. We choose
\begin{equation*}
 \| v_h \|^2_{V_h} = \sum_{\elem \in \setElem} \| \sqrt{ a(\cdot , \vec y) } \nabla v_h(\cdot) \|^2_{L^2(\elem)} + \sum_{\face \in \setFace} \frac{\eta(\vec y)}{h_\face} \| \jump{v_h} \|^2_{L^2(\face)}.
\end{equation*}
Norm $\| \cdot \|_{V_h}$ actually also depends on $\vec y$, which is suppressed in the notation. In order to deduce Poincar\'e's inequality, we cite \cite[Cor.\ 5.4]{PietroE12}:
\begin{lemma}
 \corr{Let $\mathcal T_h$ be a shape and contact regular mesh sequence member.} For the norm
 \begin{equation*}
  \| v_h \|_\textup{dG} = \left( \sum_{\elem \in \setElem} \| \nabla v_h \|^2_{L^2(\elem)} + \sum_{\face \in \setFace} \frac{1}{h_\face} \| \jump{v_h} \|^2_{L^2(\face)}  \right)^{1/2}
 \end{equation*}
 there is a constant $\sigma > 0$, \corr{independent of h,} such that
 \begin{equation*}
  \forall v_h \in V_h, \qquad \| v_h \|_{L^2(D)} \le \sigma \| v_h \|_\textup{dG}
 \end{equation*}
 holds.
\end{lemma}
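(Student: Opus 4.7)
The plan is to reduce the discrete inequality to the classical Poincar\'e inequality in $H_0^1(D)$ by introducing an Oswald-type averaging operator $\mathcal A_h \colon V_h \to V_h \cap H_0^1(D)$. To each Lagrange-style degree of freedom lying on an interior mesh entity the operator assigns the arithmetic mean of the values taken by $v_h$ in the adjacent elements, while every degree of freedom located on $\partial D$ is set to zero. The image $\mathcal A_h v_h$ is therefore continuous across interior faces and vanishes on $\partial D$, so it belongs to $H_0^1(D)$ and obeys the classical Poincar\'e bound $\|\mathcal A_h v_h\|_{L^2(D)} \le C_P \|\nabla \mathcal A_h v_h\|_{L^2(D)}$.

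Next, local approximation estimates for $\mathcal A_h$ on a generic element $\elem \in \setElem$ are established by mapping to a reference configuration and invoking polynomial trace and inverse inequalities, whose constants are $h$-independent thanks to shape and contact regularity. This yields
$$\|v_h - \mathcal A_h v_h\|_{L^2(\elem)}^2 \lesssim h_\elem \!\!\sum_{\face \subset \partial \elem}\!\|\jump{v_h}\|_{L^2(\face)}^2, \qquad \|\nabla(v_h - \mathcal A_h v_h)\|_{L^2(\elem)}^2 \lesssim h_\elem^{-1}\!\!\sum_{\face \subset \partial \elem}\!\|\jump{v_h}\|_{L^2(\face)}^2.$$
Summing over all $\elem \in \setElem$, and using that each face belongs to at most two elements, gives global counterparts whose right-hand sides are controlled by $\|v_h\|_{\textup{dG}}^2$; in the $L^2$ estimate the factor $h_\elem$ is absorbed by $(\operatorname{diam} D)^2$.

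The proof then concludes by a triangle inequality, $\|v_h\|_{L^2(D)} \le \|v_h - \mathcal A_h v_h\|_{L^2(D)} + \|\mathcal A_h v_h\|_{L^2(D)}$, where the first summand is dominated by the global $L^2$ approximation estimate and the second by the classical Poincar\'e bound combined with the element-wise decomposition $\nabla \mathcal A_h v_h = \nabla v_h + \nabla(\mathcal A_h v_h - v_h)$. Each resulting piece is bounded by $\|v_h\|_{\textup{dG}}$ with a constant depending only on the shape-regularity parameters, $C_P$, and $\operatorname{diam} D$, but not on $h$.

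The main obstacle is constructing $\mathcal A_h$ on the rather general mesh family allowed here: geometrically conforming, but not necessarily simplicial or quadrilateral, and equipped with polynomials of arbitrary total degree $k$. On such polytopal meshes no canonical Lagrange interpolation is available, so the averaging operator must be defined either on an auxiliary sub-triangulation or via a locally stable $L^2$-projection onto a fixed conforming subspace. Verifying the underlying trace and inverse inequalities at this level of generality with $h$-uniform constants is precisely what the shape and contact regularity framework of \cite{PietroE12} is designed to handle.
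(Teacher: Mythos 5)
Your proposal is correct in substance, but it is not the paper's route: the paper gives no proof of this lemma at all, it simply quotes \cite[Cor.~5.4]{PietroE12}, where the discrete Poincar\'e inequality is obtained as a special case of discrete Sobolev embeddings for broken polynomial spaces, proved by finite-volume/BV-type arguments that operate directly on general polytopal meshes and never construct a conforming companion. You instead follow the classical Oswald-averaging strategy (Karakashian--Pascal, Brenner): build $\mathcal A_h v_h\in H_0^1(D)$ by nodal averaging with zero boundary values, use the local estimates $\sum_{\elem}\|v_h-\mathcal A_h v_h\|_{L^2(\elem)}^2\lesssim\sum_{\face}h_\face\|\jump{v_h}\|_{L^2(\face)}^2$ together with the corresponding gradient bound, and finish with the triangle inequality and the continuous Poincar\'e inequality. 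This works and yields an $h$-independent constant, with two caveats you already partly flag: (i) on the general (possibly polytopal) meshes admitted here the averaging operator must be defined on the matching simplicial submesh provided by the shape- and contact-regularity definition; its image then lies in a conforming space on the submesh rather than in $V_h$, which is harmless since only membership in $H_0^1(D)$ is used, and jumps across submesh faces interior to an element vanish, so only the original faces of $\setFace$ contribute; (ii) the local Oswald estimate on an element involves all faces meeting that element (sharing a node with it), not only the faces of $\partial\elem$, so the overlap in the global summation is controlled by shape regularity rather than by ``each face has at most two neighbours''---the conclusion is unchanged. As for what each approach buys: your argument is elementary, reduces everything to the continuous Poincar\'e inequality plus standard trace/inverse estimates, and produces the enrichment bounds $\|v_h-\mathcal A_h v_h\|$ that are useful in their own right; the embedding result cited by the paper avoids any enrichment construction, covers $L^p$-type embeddings and very general meshes in one stroke, which is precisely why the authors can dispose of the lemma by citation.
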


\corr{Here, the notion ``shape and contact regular mesh sequence'' refers to \cite[Def.\ 1.38]{PietroE12} and requires that each mesh in the sequence admits a simplicial submesh without hanging nodes or deteriorating simplices.}

This can be further adapted to suit our purposes by observing that
\begin{equation}\label{EQ:discrete_poincare}
 \| v_h \|_{L^2(D)} \le \sigma \| v_h \|_\textup{dG} \le \underbrace{ \frac{\sigma}{\min\{\sqrt{a_{\min}(\bsy)},\sqrt{\eta(\vec y)}\}} }_{ = C^{V_h}_\text{Poin}(\vec y) } \| v_h \|_{V_h} \qquad \forall v_h \in V_h.
\end{equation}

The Poincar\'e constant $C^{V_h}_\text{Poin}(\vec y)$ can be chosen independently of $\vec y$ in the uniform and affine setting (there are uniform upper and lower bounds for the equivalence constants between norms), which is not possible (by our proof techniques---the equivalence constants between norms depend on $\vec y$) in the lognormal setting. \corr{Together with assumptions (U1)--(U2) and (L1)--(L2), the following Lemmas describe how $\eta(\bsy)$ should be chosen in order to ensure the integrability of $C_{{\rm Poin}}^{V_h}(\bsy)$.} 

\begin{lemma}[{\cite[Lem.\ 1.46]{PietroE12}}]\label{lemma:trace}
Let $\mathcal T_{h}$ be a member of a shape- and contact-regular mesh sequence. Then, for all $v_h\in V_h$, all $T\in\mathcal T_h$, all $F\subset\partial T$, \corr{and $h_T = \operatorname{diam}(T)$,}
$$
h_T^{1/2}\|v_h\|_{L^2(F)}\leq C_{\rm tr}\|v_h\|_{L^2(T)},
$$
where $C_{\rm tr}$ \corr{does not depend on $h$}.
\end{lemma}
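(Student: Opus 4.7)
The plan is to reduce the inequality to a reference element via a standard scaling argument. By the shape- and contact-regularity of $\mathcal T_h$, cited via \cite[Def.\ 1.38]{PietroE12}, each element $T$ admits a matching simplicial submesh with uniformly bounded regularity parameters, so it suffices to treat the case that $T$ is (or is covered by a uniformly bounded number of) simplices affinely equivalent to a fixed reference simplex $\hat T$ with face $\hat F\subset\partial\hat T$. Writing the affine map as $\bsx=B_T\hat\bsx+\bsb_T$ and setting $\hat v_h=v_h\circ(B_T\cdot+\bsb_T)\in\mathcal P_k(\hat T)$, the classical change-of-variables formulas give
$$
\|v_h\|_{L^2(T)}^2=|\det B_T|\,\|\hat v_h\|_{L^2(\hat T)}^2,\qquad \|v_h\|_{L^2(F)}^2=\frac{|\det B_T|}{h_T}\,c_F\,\|\hat v_h\|_{L^2(\hat F)}^2,
$$
where the factor $c_F$ is bounded above and below by constants depending only on the shape-regularity parameter (this uses $\|B_T\|\lesssim h_T$ and $\|B_T^{-1}\|\lesssim h_T^{-1}$, together with the comparability of $|F|/|T|$ with $1/h_T$).

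Next I would invoke the finite dimensionality of $\mathcal P_k(\hat T)$. The two seminorms $\hat v\mapsto\|\hat v\|_{L^2(\hat F)}$ and $\hat v\mapsto\|\hat v\|_{L^2(\hat T)}$ are continuous on $\mathcal P_k(\hat T)$, and the second is actually a norm (a polynomial in $\mathcal P_k(\hat T)$ vanishing on $\hat T$ is zero). By equivalence of norms on a finite-dimensional space there exists a constant $\hat C=\hat C(k,\hat T)$ with
$$
\|\hat v_h\|_{L^2(\hat F)}\le \hat C\,\|\hat v_h\|_{L^2(\hat T)}\qquad\forall \hat v_h\in\mathcal P_k(\hat T).
$$
Substituting the two scaling identities into this bound and rearranging produces $h_T^{1/2}\|v_h\|_{L^2(F)}\le C_{\rm tr}\|v_h\|_{L^2(T)}$ with $C_{\rm tr}=\hat C\sqrt{c_F}$, a quantity determined solely by $k$, $\hat T$, and the shape-regularity parameters, hence independent of $h$.

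The main obstacle is handling non-simplicial elements and faces that are not themselves images of a single reference face: a face $F$ of a polytopic $T$ may be covered by several reference-face images coming from the simplicial submesh. This is resolved by summing the reference-element bound over the covering sub-simplices and using contact-regularity, which guarantees a uniform upper bound on the number of sub-simplices meeting any given face. Since the claim is cited from \cite[Lem.\ 1.46]{PietroE12}, in an actual write-up I would simply invoke that result; the outline above is to make the structure of the argument transparent, with the scaling and the finite-dimensional norm equivalence carrying the essential content.
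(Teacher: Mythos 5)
Your proposal is correct: scaling to a reference (sub)simplex combined with equivalence of norms on the finite-dimensional space $\mathcal P_k(\hat T)$, with contact regularity controlling the number of sub-simplices covering a face, is the standard argument and is essentially the proof of the cited result \cite[Lem.~1.46]{PietroE12}. The paper itself gives no proof---it invokes that lemma directly---so your sketch matches the intended source and nothing further needs to be reconciled.
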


This allows for a stability estimate of the approximate $u_h$.

\begin{lemma}\label{LEM:dg_stability}
 Assume that there is $\tau > 1$ (independent of $\vec y$ and $h$) such that
 \begin{equation}\label{EQ:penalty_condition}
  \eta(\vec y) \ge \tau \frac{a^2_{\max}(\vec y) C^2_{\textup{tr}} N_\partial (\theta - 1)^2}{4 a_{\min}(\vec y)},
 \end{equation}
 where $C_{\textup{tr}}$ denotes the constant of Lemma~\ref{lemma:trace}, $N_\partial$ is the maximum amount of faces of a mesh element (for simplicial meshes, $N_\partial = d+1$).
 There is $\alpha > 0$ (independent of $h$) such that
 \begin{multline*}%
  \alpha \| u_h \|^2_{V_h} \overset{\circlearound{A}}{\le} \dgBil(\vec y; u_h(\cdot, \vec y), u_h(\cdot, \vec y)) = F(u_h(\cdot, \vec y)) \\
  \le \| f \|_{L^2(D)} \| u_h \|_{L^2(D)} \le C^{V_h}_\textup{Poin}(\vec y) \| f \|_{L^2(D)} \| u_h \|_{V_h} ,
 \end{multline*}
 which gives the stability estimate
 \begin{equation}\label{EQ:dg_stability}
  \| u_h \|_{V_h} \le \frac{C^{V_h}_\textup{Poin}(\vec y)}{\alpha} \| f \|_{L^2(D)} \overset{\circlearound{B}}= \frac{\sigma}{\alpha \sqrt{a_{\min}(\bsy)}} \| f \|_{L^2(D)},
 \end{equation}
 where \circlearound{B} holds if $\eta(\vec y) \ge a_{\min}(\bsy)$. Additionally, for the NIPDG method ($\theta = 1$), inequality \circlearound{A} holds with equality and $\alpha = 1$ independent of $\eta(\vec y) > 0$.
\end{lemma}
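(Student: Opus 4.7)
My plan is to establish inequality \circlearound{A}---coercivity of $\dgBil(\vec y;\cdot,\cdot)$ on $V_h$ in the norm $\|\cdot\|_{V_h}$---and then chain together Galerkin identity, Cauchy--Schwarz, and the discrete Poincaré inequality \eqref{EQ:discrete_poincare}. First, I would rewrite \eqref{eq:refertome} by combining the two consistency terms, so that testing against $v_h=u_h$ yields
\[
\dgBil(\vec y;u_h,u_h) = \sum_{\elem\in\setElem}\int_\elem a|\nabla u_h|^2\,\dx + \sum_{\face\in\setFace}\int_\face\!\Big[(\theta-1)\avg{a\nabla u_h}\cdot\jump{u_h} + \tfrac{\eta(\vec y)}{h_\face}|\jump{u_h}|^2\Big]\,\ds.
\]
For NIPDG ($\theta=1$) the cross term vanishes identically and one reads off $\dgBil(\vec y;u_h,u_h)=\|u_h\|_{V_h}^2$, giving \circlearound{A} with equality and $\alpha=1$, regardless of $\eta(\vec y)>0$.

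For IIPDG and SIPDG the cross term must be absorbed. I would apply Young's inequality in the form $|ab|\le\tfrac{a^2}{4\varepsilon}+\varepsilon b^2$ with $a=(\theta-1)\sqrt{h_\face/\eta(\vec y)}\,\avg{a\nabla u_h}$ and $b=\sqrt{\eta(\vec y)/h_\face}\,\jump{u_h}$, and choose $\varepsilon=\tfrac12$. This produces a term proportional to $\sum_\face h_\face\|\avg{a\nabla u_h}\|_{L^2(\face)}^2$ that must be bounded against the volume contribution. Using $|\avg{a\nabla u_h}|^2\le\tfrac{a_{\max}^2(\vec y)}{2}(|\nabla u_h|_{\elem^+}|^2+|\nabla u_h|_{\elem^-}|^2)$, then $h_\face\le h_\elem$, then the trace inequality from Lemma~\ref{lemma:trace} applied componentwise to the polynomial $\nabla u_h|_\elem$, and finally the face-counting bound $N_\partial$ per element, I obtain
\[
\sum_{\face\in\setFace}h_\face\|\avg{a\nabla u_h}\|_{L^2(\face)}^2 \le \frac{a_{\max}^2(\vec y)\,C_{\rm tr}^2\,N_\partial}{2\,a_{\min}(\vec y)}\sum_{\elem\in\setElem}\|\sqrt{a(\cdot,\vec y)}\nabla u_h\|_{L^2(\elem)}^2.
\]
Combining these estimates yields
\[
\dgBil(\vec y;u_h,u_h)\ge\Big(1-\frac{(\theta-1)^2 a_{\max}^2(\vec y) C_{\rm tr}^2 N_\partial}{4\,\eta(\vec y)\,a_{\min}(\vec y)}\Big)\sum_{\elem}\|\sqrt{a}\,\nabla u_h\|_{L^2(\elem)}^2 + \tfrac12\sum_\face\tfrac{\eta(\vec y)}{h_\face}\|\jump{u_h}\|_{L^2(\face)}^2.
\]
The penalty hypothesis \eqref{EQ:penalty_condition} with $\tau>1$ makes the first coefficient at least $1-1/\tau$, so that \circlearound{A} holds with $\alpha=\min\{1-1/\tau,\tfrac12\}>0$, which is $h$-independent since $C_{\rm tr}$, $N_\partial$, and $\sigma$ are.

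Given \circlearound{A}, the remaining inequalities are routine: the DG Galerkin equation applied to the test function $u_h$ gives $\dgBil(\vec y;u_h,u_h)=\mathcal L(u_h)=\int_D f\,u_h\,\dx$; Cauchy--Schwarz in $L^2(D)$ supplies $|\mathcal L(u_h)|\le\|f\|_{L^2(D)}\|u_h\|_{L^2(D)}$; and the discrete Poincaré inequality \eqref{EQ:discrete_poincare} yields the final factor $C^{V_h}_{\rm Poin}(\vec y)\|u_h\|_{V_h}$. Dividing through by $\alpha\|u_h\|_{V_h}$ gives \eqref{EQ:dg_stability}. Finally, when $\eta(\vec y)\ge a_{\min}(\vec y)$, the minimum in the definition of $C^{V_h}_{\rm Poin}(\vec y)$ is attained at $\sqrt{a_{\min}(\vec y)}$, proving the equality \circlearound{B}. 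The main obstacle I anticipate is the careful accounting in the trace/face-counting step---in particular verifying that $h_\face$ can be traded for $h_\elem$ using shape-regularity of the mesh sequence so that all constants appearing in $\alpha$ are indeed independent of $h$; apart from that the argument is a direct adaptation of the classical IPDG coercivity estimate to the $\vec y$-dependent coefficient.
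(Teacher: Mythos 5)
Your argument is correct, but it takes a more self-contained route than the paper: the paper disposes of coercivity \circlearound{A} by citing \cite[Lem.\ 7.30]{KnabnerA21} and only proves identity \circlearound{B} directly (from \eqref{EQ:discrete_poincare} and $\eta(\vec y)\ge a_{\min}(\vec y)$), whereas you reconstruct the classical IPDG coercivity estimate from scratch. Your derivation is essentially the standard proof underlying the cited lemma, and it checks out: testing with $v_h=u_h$ leaves the single cross term $(\theta-1)\int_\face\avg{a\nabla u_h}\cdot\jump{u_h}\,\ds$, Young's inequality with $\varepsilon=\tfrac12$ together with the componentwise trace inequality of Lemma~\ref{lemma:trace}, $h_\face\le h_\elem$, and the $N_\partial$ face-counting bound give exactly the coefficient $(\theta-1)^2a_{\max}^2(\vec y)C_{\rm tr}^2N_\partial/(4\eta(\vec y)a_{\min}(\vec y))$, so that \eqref{EQ:penalty_condition} yields $\alpha=\min\{1-1/\tau,\tfrac12\}$ independent of $h$ and $\vec y$; the NIPDG case $\theta=1$ gives equality with $\alpha=1$; and the remaining chain (Galerkin identity, Cauchy--Schwarz, discrete Poincar\'e, and the evaluation of $C^{V_h}_{\rm Poin}(\vec y)=\sigma/\sqrt{a_{\min}(\vec y)}$ when $\eta(\vec y)\ge a_{\min}(\vec y)$) coincides with the paper's. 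What your version buys is an explicit verification that the specific constants in \eqref{EQ:penalty_condition} (the factor $4$, $C_{\rm tr}^2$, $N_\partial$, $(\theta-1)^2$) are the right ones and that $\alpha$ depends only on $\tau$; what the paper's citation buys is brevity and delegation of the mesh-regularity bookkeeping (the $h_\face\le h_\elem$ and face-counting step you rightly flag) to the textbook reference.
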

\begin{proof}
 This is a direct consequence of \cite[Lem.\ 7.30]{KnabnerA21} except for identity \circlearound{B}. However, \circlearound{B} follows directly from combining \eqref{EQ:discrete_poincare} and $\eta (\vec y) \ge a_{\min}(\bsy)$.
\end{proof}

Additionally, we define the norm 
\begin{equation*}
 \| v_h \|^2_{V^\ast_h} = \| v_h \|^2_{V_h} + \sum_{\face \in \setFace} \frac{h_\face}{\eta(\vec y)} \| \avg{{a(\cdot, \vec y)} \nabla v_h} \|^2_{L^2(\face)}.
\end{equation*}

\begin{lemma}\label{LEM:norm_equiv}
 We have $\| v_h \|_{V_h} \le \| v_h \|_{V^\ast_h}$. Moreover, if there is $\tilde\tau > 0$ (independent of $\vec y$ and $h$) such that
 \begin{equation}\label{EQ:penalty_equiv}
  \eta(\vec y) \ge \tilde\tau \frac{a_{\max}(\vec y)^2}{a_{\min}(\vec y)} \qquad \text{ for all } \vec y,
 \end{equation}
 we have $C_\textup{equiv} > 0$ independent of $\vec y$ and $h$ such that
 \begin{equation*}%
  \| v_h \|_{V^\ast_h} \le C_\textup{equiv} \| v_h \|_{V_h} \qquad \text{ for all } v_h \in V_h.
 \end{equation*}
\end{lemma}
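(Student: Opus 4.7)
The first inequality $\|v_h\|_{V_h} \le \|v_h\|_{V_h^\ast}$ is immediate from the definition of $\|\cdot\|_{V_h^\ast}$, since the extra term is a sum of non-negative contributions.

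For the nontrivial direction, the plan is to control the additional face term
$$\sum_{F\in\setFace}\frac{h_F}{\eta(\vec y)}\|\avg{a(\cdot,\vec y)\nabla v_h}\|_{L^2(F)}^2$$
by $\|v_h\|_{V_h}^2$. First I would bound the average pointwise via the elementary inequality $(a+b)^2 \le 2(a^2+b^2)$, so that on an interior face shared by $\elem^\pm$,
$$\|\avg{a\nabla v_h}\|_{L^2(F)}^2 \le \tfrac{1}{2}\bigl(\|a\nabla v_h|_{\elem^+}\|_{L^2(F)}^2+\|a\nabla v_h|_{\elem^-}\|_{L^2(F)}^2\bigr),$$
and analogously (with only one term) on a boundary face. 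Using $a(\vec x,\vec y)\le a_{\max}(\vec y)$ pointwise pulls out a factor $a_{\max}(\vec y)^2$.

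Second, I would apply the discrete trace inequality of Lemma~\ref{lemma:trace} component-wise to $\nabla v_h|_\elem$, which is a polynomial of degree at most $k-1$ on each element. This gives $\|\nabla v_h\|_{L^2(F)}^2 \le C_{\rm tr}^2 h_\elem^{-1}\|\nabla v_h\|_{L^2(\elem)}^2$. Since $F\subset\partial \elem$ implies $h_F \le h_\elem$, the prefactor $h_F/h_\elem$ is bounded by $1$. Next, I would use $a(\vec x,\vec y)\ge a_{\min}(\vec y)$ to convert $\|\nabla v_h\|_{L^2(\elem)}^2$ into $a_{\min}(\vec y)^{-1}\|\sqrt{a(\cdot,\vec y)}\nabla v_h\|_{L^2(\elem)}^2$. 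Each element $\elem$ appears at most $N_\partial$ times in the face sum (since each element has at most $N_\partial$ faces), so after summing over $F\in\setFace$ the estimate reads
$$\sum_{F\in\setFace}\frac{h_F}{\eta(\vec y)}\|\avg{a\nabla v_h}\|_{L^2(F)}^2 \le \frac{C_{\rm tr}^2 N_\partial}{2}\cdot\frac{a_{\max}(\vec y)^2}{\eta(\vec y)\,a_{\min}(\vec y)}\sum_{\elem\in\setElem}\|\sqrt{a(\cdot,\vec y)}\nabla v_h\|_{L^2(\elem)}^2.$$

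Finally, invoking the hypothesis~\eqref{EQ:penalty_equiv} turns the $\vec y$-dependent fraction into $1/\tilde\tau$, which is $\vec y$- and $h$-independent. Bounding the element-gradient sum by $\|v_h\|_{V_h}^2$ and adding the $\|v_h\|_{V_h}^2$ term already present in $\|v_h\|_{V_h^\ast}^2$ yields
$$\|v_h\|_{V_h^\ast}^2 \le \Bigl(1+\frac{C_{\rm tr}^2 N_\partial}{2\tilde\tau}\Bigr)\|v_h\|_{V_h}^2,$$
so the claim holds with $C_{\rm equiv} := (1+C_{\rm tr}^2 N_\partial/(2\tilde\tau))^{1/2}$, which is indeed independent of $\vec y$ and $h$. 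No step is truly difficult; the only point requiring care is to ensure that the trace inequality of Lemma~\ref{lemma:trace} is applied to the gradient (a vector polynomial of one lower degree) rather than to $v_h$ itself, and that the combinatorial counting of faces per element yields the factor $N_\partial$ cleanly.
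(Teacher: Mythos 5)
Your proposal is correct and follows essentially the same route as the paper's proof: pull out $a_{\max}(\vec y)$, apply the discrete trace inequality of Lemma~\ref{lemma:trace} to $\nabla v_h$ using $h_\face \le h_\elem$, reinsert $a_{\min}(\vec y)$, count each element at most $N_\partial$ times, and invoke \eqref{EQ:penalty_equiv} to make the constant independent of $\vec y$ and $h$. The only (harmless) difference is that you keep the explicit factor $\tfrac12$ from the average and write out the resulting $C_{\rm equiv}$, which the paper leaves implicit.
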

\begin{proof}
 The first inequality $\| v_h \|_{V_h} \le \| v_h \|_{V^\ast_h}$ is obvious. For the second inequality, we observe that we need to bound the additional term of the definition of $\|\cdot\|_{V^\ast_h}$ using
 \begin{align*}
  \sum_{\face \in \setFace} \frac{h_\face}{\eta(\vec y)} \| \avg{{a(\cdot, \vec y)} \nabla v_h} \|^2_{L^2(\face)} & \le \sum_{\face \in \setFace} \frac{a_{\max}(\vec y)^2 h_\face}{\eta(\vec y)} \| \avg{\nabla v_h} \|^2_{L^2(\face)} \\
  & \le \sum_{\face \in \setFace} \frac{a_{\max}(\vec y)^2 C^2_\textup{tr}}{\eta(\vec y)} \sum_{\elem \in \setElem\colon \partial \elem\supset\face} \| \nabla v_h \|^2_{L^2(\elem)} \\
  & \le \sum_{\elem \in \setElem} \frac{a_{\max}(\vec y)^2 C^2_\textup{tr} N_\partial}{\eta(\vec y)} \| \nabla v_h \|^2_{L^2(\elem)} \\
   & \le \sum_{\elem \in \setElem} \frac{a_{\max}(\vec y)^2 C^2_\textup{tr} N_\partial}{a_{\min}(\vec y) \eta(\vec y)} \| \sqrt{a(\cdot,\vec y)} \nabla v_h \|^2_{L^2(\elem)}.
 \end{align*}
 Here, the second inequality is the discrete trace inequality \corr{Lemma \ref{lemma:trace}, and uses that $h_F = \operatorname{diam}(F) \le \operatorname{diam}(T) = h_T$ for $F \subset T$}. The constant becomes independent of $\vec y$ if \eqref{EQ:penalty_equiv} holds.
\end{proof}

\emph{Remark.} In the lognormal setting, the condition of Lemma~\ref{LEM:dg_stability} can be satisfied by using the NIPG method corresponding to $\theta=1$. Lemma~\ref{LEM:norm_equiv} suggests that we can define
$$
\eta(\bsy):=\frac{a_{\max}(\bsy)^2}{a_{\min}(\bsy)}\quad\text{for all}~\bsy.
$$
\corr{Analogously, in the affine and uniform case, we can choose
$$
\eta(\bsy):= \eta = \frac{a_{\max}^2}{a_{\min}}\quad\text{for all}~\bsy.
$$
}Later on, we will choose $\eta(\bsy)$ precisely in this way.
\corr{Although the choice of $\eta(\vec y)$ is more subtle for the SIPG and IIPG methods, we experienced that too large $\eta(\vec y)$ do not have a visual effect on the results in Section \ref{SEC:numerics}, while too small $\eta(\vec y)$ can lead to wrong approximations. In principle, increasing $\eta$ adds interelement diffusion to the method. Thus, enlarging $\eta$ stabilizes the method, while its scaling ensures that convergence rates in $h$ do not deteriorate. Too large $\eta$ values have an adversarial effect on the constants of DG methods, but as long as integrability (concerning $\boldsymbol y$) is ensured, the QMC methods will work.}

\subsection{Bounds for derivatives with respect to the random variable}
\corr{The ultimate goal of our analysis is to derive regularity bounds in the flavor of \eqref{EQ:reg_bound_fe_aff} and \eqref{EQ:reg_bound_fe_log} for the diffusion problem that is discretized with an IPDG method instead of continuous finite elements. To this end, we start with some preliminary considerations before we prove Theorem \ref{TH:stability_derivative} stating that
\begin{multline*}
  \| \partial_{\bsy}^{\bsnu} u_h(\cdot, \vec y) \|_{V_h} \le \underbrace{ C^2_\textup{equiv} C'_\textup{DG} }_{ = C_\textup{DG} } \sum_{\mathbf 0\neq \vecm \le \bsnu} \begin{pmatrix} \bsnu \\ \vecm \end{pmatrix} \\
  \times\left[ \left\| \frac{\partial_{\bsy}^{\vecm} a(\cdot, \vec y)}{a(\cdot, \vec y)} \right\|_{L^\infty(D)} + \left| \frac{\partial_{\bsy}^{\bsm} \eta(\vec y)}{\eta(\vec y)} \right| \right] \| \partial_{\bsy}^{\bsnu-\bsm} u_h(\cdot, \vec y) \|_{V_h}
\end{multline*}
under certain conditions. Notably, the derivative $\partial_{\bsy}^{\bsnu} u_h(\cdot, \vec y)$ is an element of $V_h$. This can be seen by similar arguments as conducted in \cite{cds10} and is a well-known observation when approximating time-dependent equations by DG methods, where the derivative concerns a time unknown instead of a random variable, see \cite{CockburnS98} and \cite[Sect.\ 3.1]{PietroE12}.}

To derive \corr{this} regularity bound for the IPDG methods, we first observe that $\partial_{\bsy}^{\bsnu} \mathcal L(v_h) = 0$ \corr{(see \eqref{EQ:rhs})}, and that $a(\vec x, \vec y)$ is continuous with respect to $\vec x$, since $a_0$ and all $\psi_j$ have been assumed to be Lipschitz continuous. Thus,
\begin{align}
 &\dgBil(\vec y; u_h(\cdot, \vec y), v_h)\notag\\
 & = \sum_{\elem \in \setElem} \int_\elem a(\vec x, \vec y) \nabla u_h(\vec x, \vec y) \cdot \nabla v_h(\vec x) \dx\label{eq:star} \\
 & + \sum_{\elem \in \setElem} \int_{\partial \elem} \bigg[ \theta a(\vec x, \vec y) u_h(\vec x, \vec y) \normal_\elem(\vec x) \cdot \avg{\nabla v_h(\vec x)}\notag \\
 & - \frac{1}{2} a(\vec x, \vec y) \nabla u_h(\vec x, \vec y) \cdot \jump{v_h(\vec x)} + \frac{\eta(\vec y)}{h_\face} u_h(\vec x, \vec y) \normal_\elem(\vec x) \cdot \jump{v_h(\corr{\bsx})} \bigg] \ds,\notag
\end{align}
and we can deduce by $B_h(\bsy;u_h(\cdot,\bsy),v_h)=\mathcal L(v_h)$ and $\partial_{\bsy}^{\bsnu}\mathcal L(v_h)=0$ that
\begin{align*}
 0 =~ & \partial_{\bsy}^{\bsnu} \dgBil(\vec y; u_h(\cdot, \vec y), v_h) \\
 \overset{\eqref{eq:star}}{=}& \sum_{\elem \in \setElem} \int_\elem \partial_{\bsy}^{\bsnu} [ a(\vec x, \vec y) \nabla u_h(\vec x, \vec y) ] \cdot \nabla v_h(\vec x) \dx \\
 & + \sum_{\elem \in \setElem} \int_{\partial \elem} \bigg[ \theta \partial_{\bsy}^{\bsnu} [ a(\vec x, \vec y) u_h(\vec x, \vec y) ] \normal_\elem(\vec x) \cdot \avg{\nabla v_h(\vec x)} \\
 & \qquad - \frac{1}{2} \partial_{\bsy}^{\bsnu} [ a(\vec x, \vec y) \nabla u_h(\vec x, \vec y) ] \cdot \jump{v_h(\vec x)} \\
 & \qquad + h^{-1}_\face \partial_{\bsy}^{\bsnu} [ \eta(\vec y) u_h(\vec x, \vec y) ] \normal_\elem(\vec x) \cdot \jump{v_h(\corr{\bsx})} \bigg] \ds \\
 =~ & \sum_{\elem \in \setElem} \int_\elem \sum_{\vecm \le \bsnu} \begin{pmatrix} \bsnu \\ \vecm \end{pmatrix} \partial_{\bsy}^{\vecm} a(\vec x, \vec y) \nabla \partial_{\bsy}^{\bsnu-\vecm} u_h(\vec x, \vec y) \cdot \nabla v_h(\vec x) \dx \\
 & + \sum_{\elem \in \setElem} \int_{\partial \elem} \bigg[ \theta \sum_{\vecm \le \bsnu} \begin{pmatrix} \bsnu \\ \vecm \end{pmatrix} \partial_{\bsy}^{\vecm} a(\vec x, \vec y) \partial_{\bsy}^{\bsnu-\vecm} u_h(\vec x, \vec y) \normal_\elem(\vec x) \cdot \avg{\nabla v_h(\vec x)} \\
 & \qquad - \frac{1}{2} \sum_{\vecm \le \bsnu} \begin{pmatrix} \bsnu \\ \vecm \end{pmatrix} \partial_{\bsy}^{\vecm} a(\vec x, \vec y) \nabla \partial_{\bsy}^{\bsnu-\vecm} u_h(\vec x, \vec y) \cdot \jump{v_h(\vec x)} \\
 & \qquad + h^{-1}_\face \sum_{\vecm \le \bsnu} \begin{pmatrix} \bsnu \\ \vecm \end{pmatrix} \partial_{\bsy}^{\bsm} \eta(\vec y) \partial_{\bsy}^{\bsnu-\bsm} u_h(\vec x, \vec y) \normal_\elem(\vec x) \cdot \jump{v_h(\corr{\bsx})} \bigg] \ds \\
 =~ & \sum_{\elem \in \setElem} \int_\elem \sum_{\vecm \le \bsnu} \begin{pmatrix} \bsnu \\ \vecm \end{pmatrix} \partial_{\bsy}^{\vecm} a(\vec x, \vec y) \nabla \partial_{\bsy}^{\bsnu-\vecm} u_h(\vec x, \vec y) \cdot \nabla v_h(\vec x) \dx \\
 & + \sum_{\face\in \setFace} \int_{\face} \bigg[ \theta \sum_{\vecm \le \bsnu} \begin{pmatrix} \bsnu \\ \vecm \end{pmatrix} \partial_{\bsy}^{\vecm} a(\vec x, \vec y) \jump{ \partial_{\bsy}^{\bsnu-\vecm} u_h(\vec x, \vec y) } \cdot \avg{\nabla v_h(\vec x)} \\
 & \qquad - \sum_{\vecm \le \bsnu} \begin{pmatrix} \bsnu \\ \vecm \end{pmatrix} \partial_{\bsy}^{\vecm} a(\vec x, \vec y) \avg{ \nabla \partial_{\bsy}^{\bsnu-\vecm} u_h(\vec x, \vec y) } \cdot \jump{v_h(\vec x)} \\
 & \qquad + h^{-1}_\face \sum_{\vecm \le \bsnu} \begin{pmatrix} \bsnu \\ \vecm \end{pmatrix} \partial_{\bsy}^{\bsm} \eta(\vec y) \jump{ \partial_{\bsy}^{\bsnu-\bsm} u_h(\vec x, \vec y) } \cdot \jump{v_h(\corr{\bsx})} \bigg] \ds,
\end{align*}
where the third equality is a consequence of the Leibniz product rule and the fourth equality follows from the definitions of $\avg{\cdot}$ and $\jump{\cdot}$ analogously to~\eqref{eq:refertome}.

Next, we set $v_h = \partial_{\bsy}^{\bsnu} u_h(\cdot, \vec y)$ and do some algebraic manipulations to obtain
\begin{align*}
 &\| \partial_{\bsy}^{\bsnu} u_h(\cdot, \vec y) \|^2_{V_h}\\
 &=  - \sum_{\elem \in \setElem} \int_\elem \sum_{\mathbf 0\neq \vecm \le \bsnu} \!\begin{pmatrix} \bsnu \\ \vecm \end{pmatrix}\! \partial_{\bsy}^{\vecm} a(\vec x, \vec y) \nabla \partial_{\bsy}^{\bsnu-\vecm} u_h(\vec x, \vec y) \cdot \nabla \partial_{\bsy}^{\bsnu} u_h(\vec x, \vec y) \dx \\
 & \qquad - \sum_{\face\in \setFace} \int_{\face} \bigg[ \theta \sum_{\vecm \le \bsnu} \begin{pmatrix} \bsnu \\ \vecm \end{pmatrix} \partial_{\bsy}^{\vecm} a(\vec x, \vec y) \jump{ \partial_{\bsy}^{\bsnu-\vecm} u_h(\vec x, \vec y) } \cdot \avg{\nabla \partial_{\bsy}^{\bsnu} u_h(\vec x, \vec y)} \\
 & \qquad - \sum_{\vecm \le \bsnu} \begin{pmatrix} \bsnu \\ \vecm \end{pmatrix} \partial_{\bsy}^{\vecm} a(\vec x, \vec y) \avg{ \nabla \partial_{\bsy}^{\bsnu-\vecm} u_h(\vec x, \vec y) } \cdot \jump{\partial_{\bsy}^{\bsnu} u_h(\vec x, \vec y)} \\
 & \qquad + h^{-1}_\face \sum_{\mathbf 0\neq \vecm \le \bsnu} \begin{pmatrix} \bsnu \\ \vecm \end{pmatrix} \partial_{\bsy}^{\bsm} \eta(\vec y) \jump{ \partial_{\bsy}^{\bsnu-\bsm} u_h(\vec x, \vec y) } \cdot \jump{\partial_{\bsy}^{\bsnu} u_h(\vec x, \vec y)} \bigg] \ds \\
 &=  - \sum_{\elem \in \setElem} \int_\elem \sum_{\mathbf 0\neq \vecm \le \bsnu} \begin{pmatrix} \bsnu \\ \vecm \end{pmatrix} \partial_{\bsy}^{\vecm} a(\vec x, \vec y) \nabla \partial_{\bsy}^{\bsnu-\vecm} u_h(\vec x, \vec y) \cdot \nabla \partial_{\bsy}^{\bsnu} u_h(\vec x, \vec y) \dx \\
 & \qquad - \sum_{\face\in \setFace} \int_{\face} \bigg[ \theta \sum_{\mathbf 0\neq \vecm \le \bsnu} \begin{pmatrix} \bsnu \\ \vecm \end{pmatrix} \partial_{\bsy}^{\vecm} a(\vec x, \vec y) \jump{ \partial_{\bsy}^{\bsnu-\vecm} u_h(\vec x, \vec y) } \cdot \avg{\nabla \partial_{\bsy}^{\bsnu} u_h(\vec x, \vec y)} \\
 &  \qquad - \sum_{\mathbf 0\neq \vecm \le \bsnu} \begin{pmatrix} \bsnu \\ \vecm \end{pmatrix} \partial_{\bsy}^{\vecm} a(\vec x, \vec y) \avg{ \nabla \partial_{\bsy}^{\bsnu-\vecm} u_h(\vec x, \vec y) } \cdot \jump{\partial_{\bsy}^{\bsnu} u_h(\vec x, \vec y)} \\
 & \qquad + h^{-1}_\face \sum_{\mathbf 0\neq \vecm \le \bsnu} \begin{pmatrix} \bsnu \\ \vecm \end{pmatrix} \partial_{\bsy}^{\bsm} \eta(\vec y) \jump{ \partial_{\bsy}^{\bsnu-\bsm} u_h(\vec x, \vec y) } \cdot \jump{\partial_{\bsy}^{\bsnu} u_h(\vec x, \vec y)} \bigg] \ds \\
 & \qquad - \underbrace{ \sum_{\face\in \setFace} \int_{\face} (\theta - 1) a(\vec x, \vec y) \jump{ \partial_{\bsy}^{\bsnu} u_h(\vec x, \vec y) } \cdot \avg{\nabla \partial_{\bsy}^{\bsnu} u_h(\vec x, \vec y)} \ds }_{ =: \Psi_h (\partial_{\bsy}^{\bsnu} u_h)}
\end{align*}
For the second equality we separated $\boldsymbol m=\mathbf 0$ for face integrals from the rest.

Interestingly, if $\theta=1$, the highest order terms ($\boldsymbol m=\mathbf 0$) with respect to the faces cancel and  $\Psi_h (\partial_{\bsy}^{\bsnu} u_h) = 0$ for all $\eta(\vec y) > 0$. In any case, $\Psi_h$ can be bounded by first using H\"older's inequality, second absorbing $a$ into the constants (twice), third using Lemma~\ref{lemma:trace}, fourth incorporating $a_{\min}$ into the volume norms, and finally using Young's inequality. To wit:
\begin{align*}%
 | \Psi_h (\partial_{\bsy}^{\bsnu} u_h) | & \le \sum_{\face\in \setFace} \sqrt{\frac{a_{\min}(\vec y) h_\face}{\corr 2 a_{\max}(\vec y) C^2_{\textup{tr}} N_\partial}} \| \sqrt{a(\cdot, \vec y)} \avg{\nabla \partial_{\bsy}^{\bsnu} u_h(\cdot, \vec y)} \|_{L^2(\face)} \notag\\ 
 & \qquad \times |\theta - 1| \sqrt{\frac{\corr 2 a_{\max}(\vec y) C^2_{\textup{tr}} N_\partial}{a_{\min}(\vec y) h_\face}} \| \jump{\sqrt{ a(\cdot, \vec y) } \partial_{\bsy}^{\bsnu} u_h(\cdot, \vec y) } \|_{L^2(F)} \notag\\
 & \le \sum_{\face\in \setFace} \sqrt{\frac{a_{\min}(\vec y) h_\face}{\corr 2 C^2_{\textup{tr}} N_\partial}} \| \avg{\nabla \partial_{\bsy}^{\bsnu} u_h(\cdot, \vec y)} \|_{L^2(\face)} \notag\\
 & \qquad \times \sqrt{\frac{\corr 2 a^2_{\max}(\vec y) C^2_{\textup{tr}} N_\partial (\theta - 1)^2}{a_{\min}(\vec y) h_\face}} \| \jump{\partial_{\bsy}^{\bsnu} u_h(\cdot, \vec y) } \|_{L^2(F)} \notag\\
 & \le \left[ \sum_{\face\in \setFace} \frac{a_{\min}(\vec y)}{\corr 2 N_\partial} \left(\sum_{\elem \in \setElem\colon \partial \elem\supset\face} \| \nabla \partial_{\bsy}^{\bsnu} u_h(\cdot, \vec y) \|_{L^2(\elem)}\right)^2 \right]^{1/2} \notag\\
 & \qquad \times \left[ \sum_{\face \in \setFace} \frac{\corr 2 a^2_{\max}(\vec y) C^2_{\textup{tr}} N_\partial (\theta - 1)^2}{a_{\min}(\vec y) h_\face} \| \jump{\partial_{\bsy}^{\bsnu} u_h(\cdot, \vec y) } \|^2_{L^2(F)} \right]^{1/2} \notag\\
 & \le \left[ \sum_{\elem\in \setElem} \| \sqrt{a(\cdot, \vec y)} \nabla \partial_{\bsy}^{\bsnu} u_h(\cdot, \vec y) \|^2_{L^2(\elem)} \right]^{1/2} \notag\\
 & \qquad \times \sqrt{\frac{\corr 2 a^2_{\max}(\vec y) C^2_{\textup{tr}} N_\partial (\theta - 1)^2}{a_{\min}(\vec y)}} \left[ \sum_{\face\in \setFace} \frac{1}{h_\face} \| \jump{\partial_{\bsy}^{\bsnu} u_h(\cdot, \vec y) } \|^2_{L^2(F)} \right]^{1/2} \\
 & \le \delta \sum_{\elem\in \setElem} \| \sqrt{a(\cdot, \vec y)} \nabla \partial_{\bsy}^{\bsnu} u_h(\cdot, \vec y) \|^2_{L^2(\elem)} \notag\\
 & \qquad + \frac{a^2_{\max}(\vec y) C^2_{\textup{tr}} N_\partial (\theta - 1)^2}{\corr 2 \delta a_{\min}(\vec y)} \sum_{\face\in \setFace} \frac{1}{h_\face} \| \jump{\partial_{\bsy}^{\bsnu} u_h(\cdot, \vec y) } \|^2_{L^2(F)} \notag
\end{align*}
and $\delta \in (0,1)$ can be chosen arbitrarily. \corr{Notably, the fourth inequality uses that
\begin{multline*}
 \sum_{\face\in \setFace} \frac{a_{\min}(\vec y)}{2 N_\partial} \left(\sum_{\elem \in \setElem\colon \partial \elem\supset\face} \| \nabla \partial_{\bsy}^{\bsnu} u_h(\cdot, \vec y) \|_{L^2(\elem)}\right)^2 \\
 \le \frac{1}{2 N_\partial} \sum_{\face\in \setFace} \left(\sum_{\elem \in \setElem\colon \partial \elem\supset\face} \| \sqrt{a(\cdot,\vec y)} \nabla \partial_{\bsy}^{\bsnu} u_h(\cdot, \vec y) \|_{L^2(\elem)}\right)^2,
\end{multline*}
and the fact that each element has at most $N_\partial$ faces. Hence, each element appears at most $N_\partial$ times on the right-hand side. Moreover, we have that
\begin{multline*}
 \left(\sum_{\elem \in \setElem\colon \partial \elem\supset\face} \| \sqrt{a(\cdot,\vec y)} \nabla \partial_{\bsy}^{\bsnu} u_h(\cdot, \vec y) \|_{L^2(\elem)}\right)^2 \\ \le 2 \sum_{\elem \in \setElem\colon \partial \elem\supset\face} \| \sqrt{a(\cdot,\vec y)} \nabla \partial_{\bsy}^{\bsnu} u_h(\cdot, \vec y) \|^2_{L^2(\elem)}
\end{multline*}
as each face has at most two neighbors.
} %
Thus, if \eqref{EQ:penalty_condition} holds, we can choose $\delta = \tfrac{2}{\tau+1}$, and $\Psi_h (\partial_{\bsy}^{\bsnu} u_h) < \| \partial_{\bsy}^{\bsnu} u_h(\cdot, \vec y) \|^2_{V_h}$ can be incorporated in the norm on the left-hand side producing a constant that is independent of $\vec y$:
\begin{align*}
 \| \partial_{\bsy}^{\bsnu} & u_h(\cdot, \vec y) \|^2_{V_h} \\
 \overset{\circlearound{A}}{\lesssim} & - \sum_{\elem \in \setElem} \int_\elem \sum_{\mathbf 0\neq \vecm \le \bsnu} \begin{pmatrix} \bsnu \\ \vecm \end{pmatrix} \partial_{\bsy}^{\vecm} a(\vec x, \vec y) \nabla \partial_{\bsy}^{\bsnu-\vecm} u_h(\vec x, \vec y) \cdot \nabla \partial_{\bsy}^{\bsnu} u_h(\vec x, \vec y) \dx \\
 & - \sum_{\face\in \setFace} \int_{\face} \bigg[ \theta \sum_{\mathbf 0\neq \vecm \le \bsnu} \begin{pmatrix} \bsnu \\ \vecm \end{pmatrix} \partial_{\bsy}^{\vecm} a(\vec x, \vec y) \jump{ \partial_{\bsy}^{\bsnu-\vecm} u_h(\vec x, \vec y) } \cdot \avg{\nabla \partial_{\bsy}^{\bsnu} u_h(\vec x, \vec y)} \\
 & \qquad - \sum_{\mathbf 0\neq \vecm \le \bsnu} \begin{pmatrix} \bsnu \\ \vecm \end{pmatrix} \partial_{\bsy}^{\vecm} a(\vec x, \vec y) \avg{ \nabla \partial_{\bsy}^{\bsnu-\vecm} u_h(\vec x, \vec y) } \cdot \jump{\partial_{\bsy}^{\bsnu} u_h(\vec x, \vec y)} \\
 & \qquad + h^{-1}_\face \sum_{\mathbf 0\neq\vecm \le \bsnu} \begin{pmatrix} \bsnu \\ \vecm \end{pmatrix} \partial_{\bsy}^{\bsm} \eta(\vec y) \jump{ \partial_{\bsy}^{\bsnu-\bsm} u_h(\vec x, \vec y) } \cdot \jump{\partial_{\bsy}^{\bsnu} u_h(\vec x, \vec y)} \bigg] \ds \\
 = & - \!\sum_{\mathbf 0\neq \vecm \le \bsnu} \!\!\begin{pmatrix} \bsnu \\ \vecm \end{pmatrix}\!\! \Bigg[ \!\sum_{\elem \in \setElem}\! \int_\elem\!  \frac{\partial_{\bsy}^{\vecm} a(\vec x, \vec y)}{a(\vec x, \vec y)} a(\vec x, \vec y) \nabla \partial_{\bsy}^{\bsnu-\vecm} u_h(\vec x, \vec y) \!\cdot\! \nabla \partial_{\bsy}^{\bsnu} u_h(\vec x, \vec y) \dx \\
 & + \sum_{\face\in \setFace} \int_{\face} \frac{\partial_{\bsy}^{\vecm} a(\vec x, \vec y)}{a(\vec x, \vec y)} \bigg[ \theta \jump{ \partial_{\bsy}^{\bsnu-\vecm} u_h(\vec x, \vec y) } \cdot \avg{a(\vec x, \vec y) \nabla \partial_{\bsy}^{\bsnu} u_h(\vec x, \vec y)} \\
 & \qquad - \avg{a(\vec x, \vec y) \nabla \partial_{\bsy}^{\bsnu-\vecm} u_h(\vec x, \vec y) } \cdot \jump{\partial_{\bsy}^{\bsnu} u_h(\vec x, \vec y)}  \bigg]\,{\rm d}\sigma \\
 &  + \sum_{F\in\mathcal F}\int_F\frac{\partial_{\bsy}^{\bsm} \eta(\vec y)}{\eta(\vec y)}  \frac{\eta(\vec y)}{h_\face} \jump{ \partial_{\bsy}^{\bsnu-\bsm} u_h(\vec x, \vec y) } \cdot \jump{\partial_{\bsy}^{\bsnu} u_h(\vec x, \vec y)} \ds \Bigg] \quad = (\bigstar),
\end{align*}
where `$\lesssim$' is to be interpreted as smaller than or equal to up to a positive, multiplicative constant, which does not depend on $h$, $\bsnu$, or $\vec y$. Importantly, for the non-symmetric IP method ($\theta = 1$), the symbol `$\lesssim$' at \circlearound{A} can be replaced by `$=$' independently of $\tau$. Otherwise $\tau\searrow 1$ yields an unstable method.

Our next task consists in estimating $| (\bigstar) |$. H\"older's inequality yields
\begin{subequations}
\begin{align}
 | (\bigstar) | \le & \sum_{\mathbf 0\neq \vecm \le \bsnu} \begin{pmatrix} \bsnu \\ \vecm \end{pmatrix} \left\| \frac{\partial_{\bsy}^{\vecm} a(\cdot, \vec y)}{a(\cdot, \vec y)} \right\|_{L^\infty(D)} \notag \\
 & \times \Bigg[ \sum_{\elem \in \setElem} \| \sqrt{a(\cdot, \vec y)} \nabla \partial_{\bsy}^{\bsnu-\vecm} u_h(\cdot, \vec y) \|_{L^2(\elem)} \| \sqrt{a(\cdot, \vec y)} \nabla \partial_{\bsy}^{\bsnu} u_h(\cdot, \vec y) \|_{L^2(D)} \label{EQ:elem_term}\\
 &\quad + \sum_{\face\in \setFace} \int_{\face} \left| \theta \jump{ \partial_{\bsy}^{\bsnu-\vecm} u_h(\vec x, \vec y) } \cdot \avg{a(\vec x, \vec y) \nabla \partial_{\bsy}^{\bsnu} u_h(\vec x, \vec y)} \right| \ds \label{EQ:face_term1}\\
 &\quad + \sum_{\face\in \setFace} \int_{\face} \left| \avg{a(\vec x, \vec y) \nabla \partial_{\bsy}^{\bsnu-\vecm} u_h(\vec x, \vec y) } \cdot \jump{\partial_{\bsy}^{\bsnu} u_h(\vec x, \vec y)} \right| \ds \Bigg]\label{EQ:face_term2} \\
 & + \sum_{\mathbf 0\neq \vecm \le \bsnu} \begin{pmatrix} \bsnu \\ \vecm \end{pmatrix} \left| \frac{\partial_{\bsy}^{\bsm} \eta(\vec y)}{\eta(\vec y)} \right| \notag \\
 & \quad \times \sum_{\face\in \setFace} \int_{\face} \frac{\eta(\vec y)}{h_\face} \jump{ \partial_{\bsy}^{\bsnu-\bsm} u_h(\vec x, \vec y) } \cdot \jump{\partial_{\bsy}^{\bsnu} u_h(\vec x, \vec y)} \ds. \label{EQ:jump_der}
\end{align}
\end{subequations}
The former three lines can be bounded separately using $|\theta| \le 1$: 
\begin{align*}
 |\eqref{EQ:elem_term}|  & \le \left[ \sum_{\elem \in \setElem} \| \sqrt{a(\cdot, \vec y)} \nabla \partial_{\bsy}^{\bsnu} u_h(\cdot, \vec y) \|^2_{L^2(D)}\right]^{\tfrac 12} \\
 & \qquad \times \left[\sum_{\elem \in \setElem} \| \sqrt{a(\cdot, \vec y)} \nabla \partial_{\bsy}^{\bsnu-\vecm} u_h(\cdot, \vec y) \|^2_{L^2(\elem)}\right]^{\tfrac 12}, \\
 |\eqref{EQ:face_term1}| & \le \left[ \sum_{\face\in \setFace} \frac{\eta(\vec y)}{h_\face} \| \jump{\partial_{\bsy}^{\bsnu-\bsm} u_h(\cdot, \vec y) } \|^2_{L^2(F)} \right]^{\tfrac 12} \\
 & \qquad \times \left[ \sum_{\face\in \setFace} \frac{h_\face}{\eta(\vec y)} \| \avg{ {a(\cdot, \vec y)} \nabla \partial_{\bsy}^{\bsnu} u_h(\cdot, \vec y) } \|^2_{L^2(\face)} \right]^{\tfrac 12}, \\
 |\eqref{EQ:face_term2}| & \le \left[ \sum_{\face\in \setFace} \frac{\eta(\vec y)}{h_\face} \| \jump{\partial_{\bsy}^{\bsnu} u_h(\cdot, \vec y) } \|^2_{L^2(F)} \right]^{\tfrac 12} \\
 & \qquad \times \left[ \sum_{\face\in \setFace} \frac{h_\face}{\eta(\vec y)} \| \avg{ {a(\cdot, \vec y)} \nabla \partial_{\bsy}^{\bsnu-\bsm} u_h(\cdot, \vec y) } \|^2_{L^2(\face)} \right]^{\tfrac 12}
\end{align*}
and the last line can be estimated via
\begin{equation*}
 | \eqref{EQ:jump_der} | \!\le\! \left[ \sum_{\face\in \setFace}\! \frac{\eta(\vec y)}{h_\face} \| \jump{\partial_{\bsy}^{\bsnu-\bsm} u_h(\cdot, \vec y) } \|^2_{L^2(F)} \!\right]^{\tfrac 12} \! \left[\! \sum_{\face\in \setFace} \frac{\eta(\vec y)}{h_\face} \| \jump{\partial_{\bsy}^{\bsnu} u_h(\cdot, \vec y) } \|^2_{L^2(F)} \!\right]^{\tfrac 12}\!\!.
\end{equation*}
As a consequence, we obtain by the Cauchy--Schwarz inequality that
\begin{multline*}
 | (\bigstar) | \le \sum_{\mathbf 0\neq \vecm \le \bsnu} \begin{pmatrix} \bsnu \\ \vecm \end{pmatrix} \left[ \left\| \frac{\partial_{\bsy}^{\vecm} a(\cdot, \vec y)}{a(\cdot, \vec y)} \right\|_{L^\infty(D)} + \left| \frac{\partial_{\bsy}^{\bsm} \eta(\vec y)}{\eta(\vec y)} \right| \right] \\ \times\| \partial_{\bsy}^{\bsnu} u_h(\cdot, \vec y) \|_{V^\ast_h} \| \partial_{\bsy}^{\bsnu-\bsm} u_h(\cdot, \vec y) \|_{V^\ast_h}.
\end{multline*}

Thus, considering Lemma \ref{LEM:norm_equiv} and division by $\| \partial_{\bsy}^{\bsnu} u_h(\cdot, \vec y) \|_{V_h}$ yield the following.

\begin{theorem}\label{TH:stability_derivative}
 If \eqref{EQ:penalty_condition} and \eqref{EQ:penalty_equiv} hold, there are constants $C'_\textup{DG}, C_\textup{DG} > 0$ (independent of $\vec y$ and $h$) such that
 \begin{multline}\label{EQ:dg_induc_step}
  \| \partial_{\bsy}^{\bsnu} u_h(\cdot, \vec y) \|_{V_h} \le \underbrace{ C^2_\textup{equiv} C'_\textup{DG} }_{ = C_\textup{DG} } \sum_{\mathbf 0\neq \vecm \le \bsnu} \begin{pmatrix} \bsnu \\ \vecm \end{pmatrix} \\
  \times\left[ \left\| \frac{\partial_{\bsy}^{\vecm} a(\cdot, \vec y)}{a(\cdot, \vec y)} \right\|_{L^\infty(D)} + \left| \frac{\partial_{\bsy}^{\bsm} \eta(\vec y)}{\eta(\vec y)} \right| \right] \| \partial_{\bsy}^{\bsnu-\bsm} u_h(\cdot, \vec y) \|_{V_h}.
 \end{multline}
 For NIPDG ($\theta = 1$), one may choose $C'_\textup{DG} = 1$.
\end{theorem}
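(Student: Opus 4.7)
The plan is to differentiate the discrete variational identity $\dgBil(\vec y; u_h(\cdot, \vec y), v_h) = \mathcal L(v_h)$ with respect to $\vec y$ using the multivariate Leibniz product rule, and then exploit that $\partial_{\bsy}^{\bsnu}\mathcal L(v_h)=0$ because the right-hand side is $\bsy$-independent. This produces a sum over sub-multi-indices $\vecm\le\bsnu$ of element and face integrals, each containing a factor $\partial_{\bsy}^{\vecm} a$ (or $\partial_{\bsy}^{\bsm}\eta$ in the penalty term) multiplied into a lower-order derivative of $u_h$. Rewriting $\partial_{\bsy}^{\vecm}a = (\partial_{\bsy}^{\vecm}a / a)\,a$ (and analogously for $\eta$) exhibits each summand as the logarithmic-derivative factor times the DG bilinear form structure, which is the shape needed in the target bound.

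Next, I would test with $v_h = \partial_{\bsy}^{\bsnu} u_h(\cdot,\vec y)$ and isolate the term $\vecm = \bszero$, which equals $\dgBil(\vec y;\partial_{\bsy}^{\bsnu}u_h,\partial_{\bsy}^{\bsnu}u_h)$. Moving this contribution to the left gives $\|\partial_{\bsy}^{\bsnu} u_h\|_{V_h}^{2}$ plus a residual boundary term $\Psi_h(\partial_{\bsy}^{\bsnu}u_h)$ arising from the non-symmetry of the DG form when $\theta\neq 1$. For $\theta = 1$ this residual vanishes identically, which is what lets us take $C'_\textup{DG}=1$ in the NIPDG case; for $\theta\in\{-1,0\}$ it must be estimated and absorbed.

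The main obstacle is controlling $\Psi_h$. I plan to split each face factor using the weights $\sqrt{a_{\min}(\bsy) h_F / (a_{\max}(\bsy) C_\textup{tr}^2 N_\partial)}$ and its reciprocal, apply Lemma~\ref{lemma:trace} to the average-gradient piece, and use H\"older and Young inequalities to bound $|\Psi_h|$ by $\delta \sum_T \|\sqrt{a}\,\nabla\partial_{\bsy}^{\bsnu}u_h\|_{L^2(T)}^2$ plus a penalty-weighted jump term, for any $\delta\in(0,1)$. Under the stability hypothesis \eqref{EQ:penalty_condition}, the choice $\delta=2/(\tau+1)$ makes the right-hand side strictly dominated by $\|\partial_{\bsy}^{\bsnu}u_h\|_{V_h}^{2}$, so $\Psi_h$ can be absorbed into the left-hand side with a $\bsy$- and $h$-independent constant that defines $C'_\textup{DG}$.

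Finally, the remaining $\vecm\neq\bszero$ contributions are to be bounded via H\"older: element integrals pair two element gradient norms against $\|\partial_{\bsy}^{\vecm}a/a\|_{L^\infty(D)}$, the two face-integral pieces pair a jump weighted by $\sqrt{\eta/h_F}$ against an average-of-flux weighted by $\sqrt{h_F/\eta}$---matching exactly the definition of $\|\cdot\|_{V_h^\ast}$---and the $\eta$-penalty term produces $|\partial_{\bsy}^{\bsm}\eta/\eta|$ times a product of penalty-weighted jumps. A final Cauchy--Schwarz across these pieces yields $\|\partial_{\bsy}^{\bsnu}u_h\|_{V_h^\ast}\,\|\partial_{\bsy}^{\bsnu-\vecm}u_h\|_{V_h^\ast}$ in each summand. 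Invoking Lemma~\ref{LEM:norm_equiv}, whose hypothesis is precisely \eqref{EQ:penalty_equiv}, to upgrade $\|\cdot\|_{V_h^\ast}\le C_\textup{equiv}\|\cdot\|_{V_h}$ with a $\bsy$-independent constant, and then dividing through by $\|\partial_{\bsy}^{\bsnu}u_h\|_{V_h}$, produces the claimed inequality with $C_\textup{DG} = C_\textup{equiv}^{2}\,C'_\textup{DG}$.
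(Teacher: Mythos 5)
Your proposal is correct and follows essentially the same route as the paper: differentiate $\dgBil(\vec y;u_h,v_h)=\mathcal L(v_h)$ via Leibniz, test with $v_h=\partial_{\bsy}^{\bsnu}u_h$, absorb the non-symmetric residual $\Psi_h$ using the trace lemma, Young's inequality with $\delta=2/(\tau+1)$ under \eqref{EQ:penalty_condition} (with $\Psi_h\equiv 0$ for $\theta=1$), and bound the $\vecm\neq\bszero$ terms by H\"older/Cauchy--Schwarz in the $V_h^\ast$ norm before invoking Lemma~\ref{LEM:norm_equiv} and dividing by $\|\partial_{\bsy}^{\bsnu}u_h\|_{V_h}$. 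No gaps.
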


\subsection{A brief note on \corr{error estimates}}\label{sec:brief}
The error estimate does not significantly deviate from standard DG error estimates. Thus, we keep it short and refer to the already mentioned references \cite{PietroE12,KnabnerA21} for the details. We start introducing the broken $H^2$ space
\begin{equation*}
 H^2(\setElem) = \{ v \in L^2(D) \colon v|_\elem \in H^2(\elem) \}
\end{equation*}
and observe the following.
\begin{corollary}
 Under the assumptions of Lemma \ref{LEM:norm_equiv} there is a constant $M$ (independent of $\vec y$ and $h$) such that for all $w_h \in H^2(\setElem)$ and all $v_h \in V_h$, we have
 \begin{equation*}
  B_h(w_h, v_h) \le M \| w_h \|_{V_h^\ast} \| v_h \|_{V_h}.
 \end{equation*}
\end{corollary}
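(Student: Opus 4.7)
\emph{Plan.} The proof is essentially a routine estimate term-by-term for the bilinear form given in~\eqref{eq:refertome}, using Cauchy--Schwarz and the fact that the starred norm $\|\cdot\|_{V_h^\ast}$ has been designed precisely so that the ``dual'' face terms (those involving $\avg{a\nabla\cdot}$) can be absorbed on the side of the $H^2(\setElem)$-function. First I would split $\dgBil(w_h,v_h)$ into its four contributions: the bulk term $\sum_T\int_T a\nabla w_h\cdot\nabla v_h\,\dx$, the consistency face term $-\sum_F\int_F\avg{a\nabla w_h}\cdot\jump{v_h}\,\ds$, the adjoint-consistency face term $\theta\sum_F\int_F\avg{a\nabla v_h}\cdot\jump{w_h}\,\ds$, and the penalty term $\sum_F\int_F\tfrac{\eta(\vec y)}{h_\face}\jump{w_h}\cdot\jump{v_h}\,\ds$.

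The bulk term is handled by Cauchy--Schwarz in $L^2(T)$ after splitting $a=\sqrt a\cdot\sqrt a$, yielding a bound by $\|w_h\|_{V_h}\|v_h\|_{V_h}$. The consistency face term is paired so that $\avg{a\nabla w_h}$ is weighted by $\sqrt{h_\face/\eta(\vec y)}$ and $\jump{v_h}$ by $\sqrt{\eta(\vec y)/h_\face}$; this is directly bounded by $\|w_h\|_{V_h^\ast}\|v_h\|_{V_h}$ by definition of the starred norm. The penalty term is immediately bounded by $\|w_h\|_{V_h}\|v_h\|_{V_h}$.

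The remaining piece --- the adjoint-consistency face term, which involves $\avg{a\nabla v_h}$ against $\jump{w_h}$ --- is the only one that needs real work, and it is where the condition \eqref{EQ:penalty_equiv} enters. Here I would again apply Cauchy--Schwarz with weights $\sqrt{h_\face/\eta(\vec y)}$ and $\sqrt{\eta(\vec y)/h_\face}$; this produces the factor $\bigl[\sum_F\tfrac{h_\face}{\eta(\vec y)}\|\avg{a\nabla v_h}\|_{L^2(\face)}^2\bigr]^{1/2}$, which is exactly the extra term in $\|v_h\|_{V_h^\ast}$. Because $v_h\in V_h$, Lemma~\ref{LEM:norm_equiv} bounds this by $C_\textup{equiv}\|v_h\|_{V_h}$, giving the desired estimate with a multiplicative constant independent of $\vec y$ and $h$. (Note that we cannot apply the same reduction to $w_h$: since $w_h$ is only in the broken $H^2$ space and not in $V_h$, the discrete trace inequality of Lemma~\ref{lemma:trace} is unavailable, and this is precisely the reason $\|\cdot\|_{V_h^\ast}$ was introduced in the first place.)

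Collecting the four bounds and using $\|w_h\|_{V_h}\le\|w_h\|_{V_h^\ast}$, I get $M=2+C_\textup{equiv}+|\theta|\,C_\textup{equiv}\le 2+2C_\textup{equiv}$, which is independent of $\vec y$ and $h$ under assumptions \eqref{EQ:penalty_condition}--\eqref{EQ:penalty_equiv}. The only subtle point is the asymmetry in the roles of $w_h$ and $v_h$ (one lies in $H^2(\setElem)$, the other in $V_h$), and the asymmetric definition of $\|\cdot\|_{V_h^\ast}$ versus $\|\cdot\|_{V_h}$ is tailored to match this asymmetry --- so no new obstacles arise beyond carefully pairing the right weights in each Cauchy--Schwarz step.
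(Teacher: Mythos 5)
Your proposal is correct and is essentially the paper's own argument written out in detail: the paper proves the corollary in one line by Cauchy--Schwarz (with the weights $\sqrt{h_\face/\eta}$ and $\sqrt{\eta/h_\face}$ on the face terms) followed by Lemma~\ref{LEM:norm_equiv} to convert $\| v_h \|_{V_h^\ast}$ back to $\| v_h \|_{V_h}$, exactly as you do for the adjoint-consistency term. Your remark that only the assumption \eqref{EQ:penalty_equiv} of Lemma~\ref{LEM:norm_equiv} is needed (not \eqref{EQ:penalty_condition}) is the accurate reading; the precise value of $M$ is immaterial.
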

This follows directly from Cauchy--Schwarz inequality and Lemma \ref{LEM:norm_equiv}. Thus, Strang's lemma \cite[Rem.\ 4.9]{KnabnerA21} implies that for any $w_h \in V_h$
\begin{equation*}
 \| u - u_h \|_{V^\ast_h} \le \left( 1 + \frac{MC_\textup{equiv}}{\alpha} \right) \| u - w_h \|_{V^\ast_h}.
\end{equation*}

We choose $w_h$ to be the local $L^2$ projection $\Pi u$ of $u$, and will need to estimate the respective terms of $\| u - \Pi u \|_{V^\ast_h}$. Exploiting the standard scaling arguments of DG analysis, we arrive at the following.
\begin{theorem}
 Let $u(\cdot, \vec y) \in H^{k+1}(D)$ and the assumptions of Lemmas \ref{LEM:dg_stability} and \ref{LEM:norm_equiv} hold, then we have
 \begin{multline*}
  \frac{1}{C^{V_h}_\textup{Poin}(\bsy)} \| u(\cdot, \vec y) - u_h(\cdot, \vec y) \|_{L^2(D)} \le \| u(\cdot, \vec y) - u_h(\cdot, \vec y) \|_{V^\ast_h} \\
  \lesssim \max\left\{ \sqrt{\eta(\vec y)}, {a_{\max}(\vec y)}, \sqrt{\tfrac{1}{\eta(\vec y)}}\right\} h^k |u(\cdot,\vec y)|_{H^{k+1}(D)}.
 \end{multline*}
\end{theorem}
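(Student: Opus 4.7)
The plan is to derive the lower inequality from a broken Poincar\'e argument and to derive the upper inequality via Strang's lemma followed by local $L^2$-projection estimates in the $V_h^\ast$-norm.

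For the first inequality, I would combine Lemma~\ref{LEM:norm_equiv} (which gives $\|\cdot\|_{V_h}\le\|\cdot\|_{V_h^\ast}$) with a broken version of \eqref{EQ:discrete_poincare} extended to $V_h+H^1_0(D)$; since $u\in H^1_0(D)$ has vanishing jumps on all interior and boundary faces, only the jumps of $u_h$ contribute to $\|u-u_h\|_{\mathrm{dG}}$, and the usual broken Poincar\'e argument yields $\|u-u_h\|_{L^2(D)}\le C_{\mathrm{Poin}}^{V_h}(\vec y)\,\|u-u_h\|_{V_h}\le C_{\mathrm{Poin}}^{V_h}(\vec y)\,\|u-u_h\|_{V_h^\ast}$.

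For the upper inequality, I would invoke the Strang-type bound stated just before the theorem, $\|u-u_h\|_{V_h^\ast}\le(1+MC_{\mathrm{equiv}}/\alpha)\|u-w_h\|_{V_h^\ast}$, with the choice $w_h=\Pi u$, the elementwise $L^2$-projection of $u$ onto $V_h$. The squared $V_h^\ast$-norm of $u-\Pi u$ splits into a broken weighted gradient term $\sum_T\|\sqrt{a(\cdot,\vec y)}\nabla(u-\Pi u)\|_{L^2(T)}^2$, a jump term $\sum_F (\eta(\vec y)/h_\face)\|\jump{u-\Pi u}\|_{L^2(F)}^2$, and an averaged flux term $\sum_F (h_\face/\eta(\vec y))\|\avg{a(\cdot,\vec y)\nabla(u-\Pi u)}\|_{L^2(F)}^2$. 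Pairing the classical $L^2$-projection estimates $\|\nabla(u-\Pi u)\|_{L^2(T)}\lesssim h^k|u|_{H^{k+1}(T)}$ with the multiplicative trace inequality $\|v\|_{L^2(\partial T)}^2\lesssim h^{-1}\|v\|_{L^2(T)}^2+h\|\nabla v\|_{L^2(T)}^2$ applied to $v=u-\Pi u$ and to $v=\nabla(u-\Pi u)$ yields the face estimates $\|u-\Pi u\|_{L^2(\partial T)}\lesssim h^{k+1/2}|u|_{H^{k+1}(T)}$ and $\|\nabla(u-\Pi u)\|_{L^2(\partial T)}\lesssim h^{k-1/2}|u|_{H^{k+1}(T)}$. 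Bounding $a(\cdot,\vec y)\le a_{\max}(\vec y)$ pointwise, the three terms are then controlled respectively by $a_{\max}(\vec y)\,h^{2k}|u|_{H^{k+1}(D)}^2$, $\eta(\vec y)\,h^{2k}|u|_{H^{k+1}(D)}^2$, and $a_{\max}(\vec y)^2/\eta(\vec y)\cdot h^{2k}|u|_{H^{k+1}(D)}^2$.

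Taking square roots and collecting, the bound becomes a constant multiple of $\bigl(\sqrt{a_{\max}(\vec y)}+\sqrt{\eta(\vec y)}+a_{\max}(\vec y)/\sqrt{\eta(\vec y)}\bigr)\,h^k|u|_{H^{k+1}(D)}$, and each of the three prefactors is dominated by $\max\{\sqrt{\eta(\vec y)},a_{\max}(\vec y),1/\sqrt{\eta(\vec y)}\}$ (using for instance $\sqrt{a_{\max}}\le\max\{1,a_{\max}\}$ and $a_{\max}/\sqrt{\eta}\le a_{\max}\max\{1,1/\sqrt{\eta}\}$, with the trivial factor $1$ absorbed into the maximum). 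The main obstacle is the averaged flux term, which requires a trace estimate for $\nabla(u-\Pi u)$: on the $u$-side this is obtained from $\nabla u\in H^k(T)$ via the multiplicative trace inequality, on the polynomial $\Pi u$-side one appeals to Lemma~\ref{lemma:trace} for $\nabla\Pi u$, and one must also track the absorption of $a(\cdot,\vec y)$ into $a_{\max}(\vec y)$. All remaining ingredients amount to routine scaling arguments on shape- and contact-regular meshes.
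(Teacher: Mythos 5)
Your route is essentially the paper's: the first inequality is the broken Poincar\'e bound (extended from $V_h$ to $V_h+H_0^1(D)$, which is standard since the cited discrete Poincar\'e inequality holds on broken $H^1$ spaces), and the second comes from the Strang-type estimate stated before the theorem with $w_h=\Pi u$, followed by elementwise $L^2$-projection and trace estimates for the three constituents of $\|u-\Pi u\|_{V_h^\ast}$ --- exactly what the paper compresses into ``standard scaling arguments of DG analysis.''

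One step does not go through as literally written: you absorb the flux prefactor $a_{\max}(\vec y)/\sqrt{\eta(\vec y)}$ into the maximum via $a_{\max}/\sqrt{\eta}\le a_{\max}\max\{1,1/\sqrt{\eta}\}$, but a product of two arguments of the maximum is not dominated by the maximum; for instance $\eta(\vec y)<1$ with $a_{\max}(\vec y)$ large would break this (your treatment of $\sqrt{a_{\max}}\le\max\{1,a_{\max}\}$ is fine, since $1\le\max\{\sqrt{\eta},1/\sqrt{\eta}\}$ always). The repair is already contained in your hypotheses: the theorem assumes the setting of Lemma~\ref{LEM:norm_equiv}, i.e.\ \eqref{EQ:penalty_equiv}, so $\eta(\vec y)\ge\tilde\tau\,a_{\max}(\vec y)^2/a_{\min}(\vec y)\ge\tilde\tau\,a_{\max}(\vec y)$, whence $a_{\max}(\vec y)/\sqrt{\eta(\vec y)}\le\sqrt{\eta(\vec y)}/\tilde\tau$ with $\tilde\tau$ independent of $\vec y$ and $h$, and the offending term is absorbed into $\sqrt{\eta(\vec y)}$ up to a harmless constant. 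With that single correction, your argument coincides with the paper's proof.
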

Notably, the convergence order for SIPG can be increased by one using a duality argument as in \cite[Sect.\ 4.2.4]{PietroE12} if we have elliptic regularity. Importantly, the FE error in the QMC setting will be integrated with respect to $\vec y$. That is why, we additionally assume that $|u(\cdot,\vec y)|_{H^{k+1}(D)}$ is integrable with respect to $\vec y$. By standard elliptic regularity theory, this holds for example when $D$ is a bounded, convex domain, the diffusion coefficient $a$ is Lipschitz continuous, and the source term $f\in L^2(D)$. For $k=1$, the integrability of the seminorm is shown in~\cite{teckentrup}.

\section{Parametric regularity analysis}\label{sec:parametric}
\subsection{Uniform and affine setting}
In this setting, the regularity analysis is completely analogous to the conforming FE setting. Specifically, the choice of $\eta(\bsy)$ is not as critical as in the lognormal case to be discussed later on: $\eta(\bsy)$ can be chosen independently of $\bsy$ and large enough to satisfy Lemmas~\ref{LEM:dg_stability} and~\ref{LEM:norm_equiv}.
\begin{lemma}[Regularity bound in the uniform and affine setting]\label{lemma:affineregularity}
Let assumptions {\rm (U1)}--{\rm (U5)} hold with $\eta$ sufficiently large (independently of $\bsy$). Let $\bsb=(b_j)_{j\geq 1}$ with $b_j:=\frac{C_\textup{DG} \|\psi_j\|_{L^\infty(D)}}{\alpha a_{\min}}$. Then for all $\bsnu\in\mathscr F$ and $\bsy\in U$, it holds that
$$
\|\partial_{\bsy}^{\bsnu}u_h(\cdot,\bsy)\|_{V_h}\leq |\bsnu|!\bsb^{\bsnu}\frac{C_{\rm Poin}^{V_h}}{\alpha}\|f\|_{L^2(D)}.
$$
\end{lemma}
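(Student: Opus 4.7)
\medskip\noindent\emph{Proof outline.} The plan is induction on $|\bsnu|$, using Lemma~\ref{LEM:dg_stability} as the base case and the recursion~\eqref{EQ:dg_induc_step} of Theorem~\ref{TH:stability_derivative} for the inductive step. Two structural simplifications make the affine case very clean: since $a(\bsx,\bsy)=a_0(\bsx)+\sum_{j\geq 1}y_j\psi_j(\bsx)$ is affine in $\bsy$, we have $\partial_{\bsy}^{\bse_j}a=\psi_j$ and $\partial_{\bsy}^{\bsm}a\equiv 0$ for all $|\bsm|\geq 2$; and since $\eta(\bsy)\equiv\eta$ is chosen independently of $\bsy$ (as permitted by hypothesis and suggested in the remark following Lemma~\ref{LEM:norm_equiv}), $\partial_{\bsy}^{\bsm}\eta=0$ for every $\bsm\neq\bszero$.

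For $|\bsnu|=\bszero$, the stability bound~\eqref{EQ:dg_stability} of Lemma~\ref{LEM:dg_stability} gives the base case immediately. For the inductive step, I would substitute the two simplifications above into~\eqref{EQ:dg_induc_step}: the sum over $\mathbf 0\neq\bsm\leq\bsnu$ collapses to a single-neighbor sum over $\bsm=\bse_j$ with $j\in\supp(\bsnu)$, and the entire $\eta$-term drops out. Applying (U2) to bound
$$\bigg\|\frac{\psi_j}{a(\cdot,\bsy)}\bigg\|_{L^\infty(D)}\leq \frac{\|\psi_j\|_{L^\infty(D)}}{a_{\min}}$$
reduces the recursion to
$$\|\partial_{\bsy}^{\bsnu}u_h(\cdot,\bsy)\|_{V_h}\leq \frac{C_\textup{DG}}{a_{\min}}\sum_{j\in\supp(\bsnu)}\nu_j\,\|\psi_j\|_{L^\infty(D)}\,\|\partial_{\bsy}^{\bsnu-\bse_j}u_h(\cdot,\bsy)\|_{V_h}.$$

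I would then insert the inductive hypothesis for each $\|\partial_{\bsy}^{\bsnu-\bse_j}u_h\|_{V_h}$, recognize $C_\textup{DG}\|\psi_j\|_{L^\infty(D)}/a_{\min}=\alpha\,b_j$, and use the elementary identity $\sum_{j\in\supp(\bsnu)}\nu_j(|\bsnu|-1)!=|\bsnu|!$ together with $b_j\,\bsb^{\bsnu-\bse_j}=\bsb^{\bsnu}$ to close the induction. I do not anticipate any real obstacle, since Theorem~\ref{TH:stability_derivative} has already done the DG-specific analytic work and the affine setting reduces the combinatorics to exactly the first-neighbor recursion that underlies the conforming FE bound~\eqref{EQ:reg_bound_fe_aff}. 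The only bookkeeping point is verifying that $\alpha$ and $C_\textup{Poin}^{V_h}$ in the resulting constant are genuinely $\bsy$-independent under (U1)--(U2) when $\eta$ is taken large enough uniformly in $\bsy$; this follows from Lemma~\ref{LEM:dg_stability} and~\eqref{EQ:discrete_poincare}.
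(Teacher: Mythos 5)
Your proposal is correct and follows essentially the same route as the paper's own proof: induction on $|\bsnu|$ with Lemma~\ref{LEM:dg_stability} as the base case, Theorem~\ref{TH:stability_derivative} with the affine structure of $a$ and the $\bsy$-independence of $\eta$ collapsing the recursion to first-neighbor terms, and the identity $\sum_{j\in\supp(\bsnu)}\nu_j(|\bsnu|-1)!=|\bsnu|!$ to close the induction. The only bookkeeping point (shared implicitly by the paper) is that identifying $C_\textup{DG}\|\psi_j\|_{L^\infty(D)}/a_{\min}=\alpha b_j$ cancels the $1/\alpha$ from the inductive hypothesis, so reaching the stated bound with $C_{\rm Poin}^{V_h}/\alpha$ uses $\alpha\le 1$, which holds for the coercivity constant of Lemma~\ref{LEM:dg_stability}.
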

\begin{proof}
This is an immediate consequence of~\cite[Lem.~9.1]{kuonuyenssurvey} (in fact, it already appears in~\cite{cds10}), but since the proof is short, we present it for completeness. The proof is carried out by induction with respect to the order of the multi-indices $\bsnu\in\mathscr F$. In the affine setting, we make use of the fact that
$$
\partial_{\bsy}^{\bsnu}a(\bsx,\bsy)=\begin{cases}
a(\bsx,\bsy)&\text{if}~\bsnu=\mathbf 0,\\
\psi_j(\bsx)&\text{if}~\bsnu=\boldsymbol e_j,~j\geq 1,\\
0&\text{otherwise},
\end{cases}
$$
where $\boldsymbol e_j$ denotes the $j^{\rm th}$ Euclidean standard unit vector.

The base of the induction is resolved by observing that the claim is equivalent to the {\em a priori} bound when $\bsnu=\mathbf 0$, see Lemma \ref{LEM:dg_stability}.

Let $\bsnu\in\mathscr F$ and suppose that the claim has already been proved for all multi-indices with order less than $|\bsnu|$. Then it is a consequence of Theorem~\ref{TH:stability_derivative} \corr{and the fact that $\partial^{\boldsymbol m}\eta(\boldsymbol y)=0$ for $|\boldsymbol m|\geq 1$} that
\begin{align*}
 \| \partial_{\bsy}^{\bsnu} u_h(\cdot, \vec y) \|_{V_h} &\leq C_\textup{DG} \sum_{\mathbf 0\neq \vecm \le \bsnu} \begin{pmatrix} \bsnu \\ \vecm \end{pmatrix} \bigg\| \frac{\partial_{\bsy}^{\vecm} a(\cdot, \vec y)}{a(\cdot,\vec y)} \bigg\|_{L^\infty(D)} \| \partial_{\bsy}^{\bsnu-\vecm} u_h(\cdot, \vec y) \|_{V_h}\\
 &\leq\sum_{j\in{\rm supp}(\bsnu)} \nu_jb_j |\bsnu-\boldsymbol e_j|!\bsb^{\bsnu-\boldsymbol e_j}\frac{C_{\rm Poin}^{V_h}}{\alpha}\|f\|_{L^2(D)}\\
  &=\bsb^{\bsnu}(|\bsnu|-1)!\bigg(\sum_{j\in{\rm supp}(\bsnu)} \nu_j\bigg)\frac{C_{\rm Poin}^{V_h}}{\alpha}\|f\|_{L^2(D)} \\
    &=\bsb^{\bsnu}|\bsnu|!\frac{C_{\rm Poin}^{V_h}}{\alpha}\|f\|_{L^2(D)},
\end{align*}
as desired.
\end{proof}

\subsection{Derivation of QMC error in the uniform and affine setting}\label{sec:unifqmc}

In what follows, we define the dimensionally-truncated solution by setting
$$
u_{s,h}(\cdot,\bsy):=u_h(\cdot,(y_1,\ldots,y_s,0,0,\ldots)),\quad \bsy\in [-\tfrac12,\tfrac12]^s.
$$

\begin{theorem}\label{thm:qmcweight}
Let assumptions {\rm (U1)}--{\rm (U5)} hold and suppose that $n=2^m$, $m\in\mathbb N$. There exists a generating vector constructed by the CBC algorithm such that
\begin{align*}
&\sqrt{\mathbb E_{\boldsymbol\Delta}\bigg\|\int_{\corr{[-1/2,1/2]^s}} u_{s,h}(\cdot,\bsy)\,{\rm d}\bsy-Q_{\rm ran}(u_{s,h})\bigg\|_{L^2(D)}^2}\\
&=\begin{cases}
\mathcal O(n^{-1/p+1/2})&\text{if}~p\in (2/3,1),\\
\mathcal O(n^{-1+\varepsilon})~\text{for arbitrary}~\varepsilon\in(0,1/2)&\text{if}~p\in (0,2/3],
\end{cases}
\end{align*}
where the implied coefficient is independent of $s$ in both cases, when the weights $(\gamma_{\setu})_{\setu\subseteq\{1:s\}}$ in Lemma~\ref{lemma:affineqmc} are chosen to be
$$
\gamma_{\setu}=\bigg(|\setu|!\prod_{j\in\setu}\frac{b_j}{\sqrt{\varrho(\lambda)}}\bigg)^{2/(1+\lambda)}\quad\text{for}~\setu\subseteq\{1:s\}
$$
and
$$
\lambda=\begin{cases}
\frac{p}{2-p}&\text{if}~p\in (2/3,1),\\
\frac{1}{2-2\varepsilon}~\text{for arbitrary}~\varepsilon\in(0,1/2)&\text{if}~p\in (0,2/3].
\end{cases}
$$
\end{theorem}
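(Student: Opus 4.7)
The plan is to combine Lemma~\ref{lemma:affineqmc} with the parametric regularity bound of Lemma~\ref{lemma:affineregularity} and then optimize over the weights $\boldsymbol\gamma$. Since $u_{s,h}(\cdot,\bsy)$ takes values in $L^2(D)$ rather than $\mathbb R$, I would first promote Lemma~\ref{lemma:affineqmc} to a Bochner-valued statement: for each $\phi\in L^2(D)$ with $\|\phi\|_{L^2(D)}\leq 1$, apply Lemma~\ref{lemma:affineqmc} to the scalar integrand $G_\phi(\bsy):=\langle u_{s,h}(\cdot,\bsy),\phi\rangle_{L^2(D)}$ and take the supremum over such $\phi$, using Cauchy--Schwarz to control the partial derivatives $\partial_{\bsy}^{\bsnu}G_\phi$ by $\|\partial_{\bsy}^{\bsnu}u_{s,h}(\cdot,\bsy)\|_{L^2(D)}$. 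This yields
$$
\sqrt{\mathbb E_{\boldsymbol\Delta}\|I_s(u_{s,h})-Q_{\rm ran}(u_{s,h})\|_{L^2(D)}^2}\leq \bigg(\frac{2}{n}\sum_{\emptyset\neq\setu\subseteq\{1:s\}}\gamma_{\setu}^\lambda\varrho(\lambda)^{|\setu|}\bigg)^{1/(2\lambda)}\|u_{s,h}\|_{s,\boldsymbol\gamma},
$$
where $\|u_{s,h}\|_{s,\boldsymbol\gamma}$ denotes the natural $L^2(D)$-valued analogue of the weighted Sobolev norm of Section~\ref{SEC:qmcc}.

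Second, I would estimate $\|u_{s,h}\|_{s,\boldsymbol\gamma}$ by pushing the $L^2(D)$-norm inside the inner integral over $\bsy_{-\setu}$ via the triangle inequality, then applying $\|v\|_{L^2(D)}\leq C_{\rm Poin}^{V_h}\|v\|_{V_h}$ together with Lemma~\ref{lemma:affineregularity} for the multi-index $\bsnu=\boldsymbol 1_\setu$. Since $|\bsnu|!=|\setu|!$ and $\boldsymbol b^{\bsnu}=\prod_{j\in\setu}b_j$, and since the integration domain $[-\tfrac12,\tfrac12]^{s-|\setu|}$ has unit measure, this produces
$$
\|u_{s,h}\|_{s,\boldsymbol\gamma}^2\leq C^2\sum_{\setu\subseteq\{1:s\}}\frac{1}{\gamma_{\setu}}(|\setu|!)^2\prod_{j\in\setu}b_j^2,
$$
with $C=(C_{\rm Poin}^{V_h})^2\|f\|_{L^2(D)}/\alpha$; note that $C_{\rm Poin}^{V_h}$ and $\alpha$ are $\bsy$-independent in the uniform and affine setting thanks to the choice of $\eta$ in the remark after Lemma~\ref{LEM:norm_equiv}.

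Third, I would choose the weights $\gamma_{\setu}$ to minimize the product of the CBC prefactor with this bound. A standard Lagrange-multiplier computation (cf.~\cite[Sect.~6.2]{kuonuyenssurvey}) applied to
$$
\bigg(\sum_\setu\gamma_{\setu}^\lambda\varrho(\lambda)^{|\setu|}\bigg)^{1/(2\lambda)}\bigg(\sum_\setu\frac{1}{\gamma_\setu}(|\setu|!)^2\prod_{j\in\setu}b_j^2\bigg)^{1/2}
$$
yields exactly the optimal weights stated in the theorem, and the minimized bound collapses to $n^{-1/(2\lambda)}$ times a factor of the form
$$
\bigg(\sum_{\setu\subseteq\{1:s\}}(|\setu|!)^{2\lambda/(1+\lambda)}\prod_{j\in\setu}\bigl(b_j^{2\lambda/(1+\lambda)}\varrho(\lambda)^{1/(1+\lambda)}\bigr)\bigg)^{(1+\lambda)/(2\lambda)}.
$$

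The main obstacle is showing that this last factor is bounded independently of $s$ for a suitable $\lambda\in(1/2,1]$. This is where assumption~(U3) enters via a Stechkin-type estimate (cf.~\cite[Lem.~6.3]{kuonuyenssurvey}): the series is summable uniformly in $s$ as soon as $\sum_{j\geq 1}b_j^{2\lambda/(1+\lambda)}<\infty$, equivalently $2\lambda/(1+\lambda)\geq p$. Minimizing $n^{-1/(2\lambda)}$ subject to this constraint together with $\lambda\in(1/2,1]$ gives $\lambda=p/(2-p)$ when $p\in(2/3,1)$, yielding the rate $n^{-1/p+1/2}$; for $p\in(0,2/3]$ the constraint is no longer binding and one takes $\lambda=1/(2-2\varepsilon)$ for arbitrary $\varepsilon\in(0,1/2)$, producing the rate $n^{-1+\varepsilon}$. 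In both regimes the implied constant is $s$-independent, which completes the proof.
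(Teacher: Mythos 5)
Your overall strategy is the paper's: combine the scalar CBC error bound of Lemma~\ref{lemma:affineqmc} with the DG regularity bound of Lemma~\ref{lemma:affineregularity} applied with $\bsnu=\boldsymbol 1_{\setu}$ (picking up the two factors of $C_{\rm Poin}^{V_h}$, one from $\|\cdot\|_{L^2(D)}\leq C_{\rm Poin}^{V_h}\|\cdot\|_{V_h}$ and one inside the regularity bound), then optimize the weights and impose $2\lambda/(1+\lambda)\geq p$ with $\lambda\in(1/2,1]$. The one step that does not hold as written is the duality lift of Lemma~\ref{lemma:affineqmc} to the $L^2(D)$-valued integrand. For each \emph{fixed} $\phi$ with $\|\phi\|_{L^2(D)}\leq 1$, Lemma~\ref{lemma:affineqmc} bounds $\mathbb E_{\boldsymbol\Delta}\big[|I_s(G_\phi)-Q_{\rm ran}(G_\phi)|^2\big]$; taking the supremum over $\phi$ of these bounds therefore controls $\sup_{\phi}\mathbb E_{\boldsymbol\Delta}\big[|I_s(G_\phi)-Q_{\rm ran}(G_\phi)|^2\big]$, whereas the quantity in the theorem is $\mathbb E_{\boldsymbol\Delta}\big[\sup_{\phi}|\langle I_s(u_{s,h})-Q_{\rm ran}(u_{s,h}),\phi\rangle_{L^2(D)}|^2\big]$, because the maximizing $\phi$ depends on the random shift $\boldsymbol\Delta$. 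Since $\mathbb E\sup\geq\sup\mathbb E$, the inequality goes the wrong way, and the Bochner-valued analogue of Lemma~\ref{lemma:affineqmc} that you assert does not follow from the argument you give.

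The repair is standard and is exactly what the paper does: apply the scalar Lemma~\ref{lemma:affineqmc} to the integrand $u_{s,h}(\bsx,\cdot)$ for almost every fixed $\bsx\in D$, integrate the resulting inequality over $D$, and interchange the $\bsx$-integration with $\mathbb E_{\boldsymbol\Delta}$ and the $\bsy$-integrals by Fubini's theorem (equivalently, expand $u_{s,h}$ in an orthonormal basis of $L^2(D)$ and use Parseval, so the supremum over test functions is replaced by a sum that commutes with the expectation). This yields precisely the estimate you wanted, with $\int_D\|u_{s,h}(\bsx,\cdot)\|_{s,\bsgamma}^2\,{\rm d}\bsx$ on the right-hand side. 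From that point on your proof coincides with the paper's: Jensen/Cauchy--Schwarz on the inner integral over $\bsy_{-\setu}$, the discrete Poincar\'e inequality, Lemma~\ref{lemma:affineregularity}, the stated weights, and the two-case choice of $\lambda$. One small caveat: uniform-in-$s$ boundedness of the resulting sum needs more than $\sum_{j\geq1}b_j^{2\lambda/(1+\lambda)}<\infty$; after grouping by $|\setu|=\ell$ the terms behave like $(\ell!)^{(\lambda-1)/(1+\lambda)}c^{\ell}$, which the paper controls with the ratio test. This is harmless here because both of your choices give $\lambda<1$ strictly, but at $\lambda=1$ one would additionally need a smallness condition on $c$.
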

\begin{proof} In the uniform and affine setting, $C_{\rm Poin}^{V_h}$ is independent of $\bsy$ and so we can proceed with the usual derivation of QMC weights. Let the assumptions of Lemma~\ref{lemma:affineqmc} be in effect. Furthermore, suppose that assumptions (U1)--(U3) and (U5) hold. Then for almost all $\bsx\in D$, we have from Theorem~\ref{lemma:affineqmc} that
\begin{align*}
&\mathbb E_{\boldsymbol\Delta}\bigg|\int_{\corr{[-1/2,1/2]^s}} u_{s,h}(\bsx,\bsy)\,{\rm d}\bsy-Q_{\rm ran}(u_{s,h}(\bsx,\cdot))\bigg|^2\\
&\leq \bigg(\frac{2}{n}\bigg)^{1/\lambda}\frac{1}{R}\bigg(\sum_{\emptyset\neq\setu\subseteq\{1:s\}}\gamma_{\setu}^\lambda \varrho(\lambda)^{|\setu|}\bigg)^{1/\lambda} \\
& \qquad \times \sum_{\setu\subseteq\{1:s\}}\frac{1}{\gamma_{\setu}}\int_{[-1/2,1/2]^{|\setu|}}\bigg(\int_{[-1/2,1/2]^{s-|\setu|}}\frac{\partial^{|\setu|}}{\partial \bsy_{\setu}}u_{s,h}(\bsx,\bsy)\,{\rm d}\bsy_{-\setu}\bigg)^2\,{\rm d}\bsy_{\setu}\\
&\leq \!\bigg(\frac{2}{n}\bigg)^{1/\lambda}\!\frac{1}{R}\bigg(\sum_{\emptyset\neq\setu\subseteq\{1:s\}}\!\gamma_{\setu}^\lambda \varrho(\lambda)^{|\setu|}\!\bigg)^{1/\lambda}\!\!\sum_{\setu\subseteq\{1:s\}}\!\frac{1}{\gamma_{\setu}}\int_{[-1/2,1/2]^s}\!\bigg|\frac{\partial^{|\setu|}}{\partial \bsy_{\setu}}u_{s,h}(\bsx,\bsy)\bigg|^2\,\!{\rm d}\bsy.
\end{align*}
Integrating over $\bsx\in D$ on both sides and using Fubini's theorem, we obtain
\begin{align*}
&\mathbb E_{\boldsymbol\Delta}\bigg\|\int_{\corr{[-1/2,1/2]^s}} u_{s,h}(\cdot,\bsy)\,{\rm d}\bsy-Q_{\rm ran}(u_{s,h})\bigg\|_{L^2(D)}^2\\
&\leq \bigg(\frac{2}{n}\bigg)^{1/\lambda}\frac{1}{R}\bigg(\sum_{\emptyset\neq\setu\subseteq\{1:s\}}\gamma_{\setu}^\lambda \varrho(\lambda)^{|\setu|}\bigg)^{1/\lambda}\\
&\qquad \times\sum_{\setu\subseteq\{1:s\}}\frac{1}{\gamma_{\setu}}\int_{[-1/2,1/2]^s}\bigg\|\frac{\partial^{|\setu|}}{\partial \bsy_{\setu}}u_{s,h}(\cdot,\bsy)\bigg\|_{L^2(D)}^2\,{\rm d}\bsy\\
&\leq \bigg(\frac{2}{n}\bigg)^{1/\lambda}\frac{(C_{\rm Poin}^{V_h})^2}{R}\bigg(\sum_{\emptyset\neq\setu\subseteq\{1:s\}}\gamma_{\setu}^\lambda \varrho(\lambda)^{|\setu|}\bigg)^{1/\lambda} \\
& \qquad \times \sum_{\setu\subseteq\{1:s\}}\frac{1}{\gamma_{\setu}}\int_{[-1/2,1/2]^s}\bigg\|\frac{\partial^{|\setu|}}{\partial \bsy_{\setu}}u_{s,h}(\cdot,\bsy)\bigg\|_{V_h}^2\,{\rm d}\bsy,
\end{align*}
where we used the discrete Poincar\'e inequality \eqref{EQ:discrete_poincare}. (Recall that in the uniform and affine setting, the Poincar\'e constant can be bounded independently of $\bsy$.) We can now apply Lemma~\ref{lemma:affineregularity} to obtain
\begin{align*}
&\mathbb E_{\boldsymbol\Delta}\bigg\|\int_{\corr{[-1/2,1/2]^s}} u_{s,h}(\cdot,\bsy)\,{\rm d}\bsy-Q_{\rm ran}(u_{s,h})\bigg\|_{L^2(D)}^2\\
&\leq \bigg(\frac{2}{n}\bigg)^{1/\lambda}\frac{(C_{\rm Poin}^{V_h})^4\|f\|_{L^2(D)}^2}{\alpha^2 R}\bigg(\sum_{\emptyset\neq\setu\subseteq\{1:s\}}\gamma_{\setu}^\lambda \varrho(\lambda)^{|\setu|}\bigg)^{1/\lambda}\sum_{\setu\subseteq\{1:s\}}\frac{1}{\gamma_{\setu}}(|\setu|!)^2\prod_{j\in\setu}b_j^2.
\end{align*}
It is easy to check that the upper bound is minimized by choosing (cf., e.g.,~\cite[Lem.~6.2]{kss12})
\begin{align}
\gamma_{\setu}=\bigg(|\setu|!\prod_{j\in\setu}\frac{b_j}{\sqrt{\varrho(\lambda)}}\bigg)^{2/(1+\lambda)}\quad\text{for}~\setu\subseteq\{1:s\}.\label{eq:qmcweightsaffine}
\end{align}
By plugging these weights into the error bound, we obtain
\begin{align*}
&\sqrt{\mathbb E_{\boldsymbol\Delta}\bigg\|\int_{\corr{[-1/2,1/2]^s}} u_{s,h}(\cdot,\bsy)\,{\rm d}\bsy-Q_{\rm ran}(u_{s,h})\bigg\|_{L^2(D)}^2}\\
&\leq \bigg(\frac{2}{n}\bigg)^{1/(2\lambda)}\frac{(C_{\rm Poin}^{V_h})^2\|f\|_{L^2(D)}}{\alpha\sqrt R}\bigg(\underset{=:C(s,\lambda)}{\underbrace{\sum_{\setu\subseteq\{1:s\}}\bigg(|\setu|!\prod_{j\in\setu}\frac{b_j}{\sqrt{\varrho(\lambda)}}\bigg)^{\frac{2\lambda}{1+\lambda}} \varrho(\lambda)^{|\setu|}}}\bigg)^{\frac{\lambda+1}{2\lambda}}.
\end{align*}
The term $C(s,\lambda)$ can be bounded independently of $s$:
\begin{align*}
C(s,\lambda)&=\sum_{\ell=0}^s (\ell!)^{2\lambda/(1+\lambda)}\varrho(\lambda)^{\frac{1}{1+\lambda}\ell}\sum_{\substack{\setu\subseteq\{1:s\}\\ |\setu|=\ell}}\prod_{j\in\setu}b_j^{2\lambda/(1+\lambda)}\\
&\leq \sum_{\ell=0}^\infty (\ell!)^{2\lambda/(1+\lambda)}\varrho(\lambda)^{\frac{1}{1+\lambda}\ell}\frac{1}{\ell!}\bigg(\sum_{j=1}^\infty b_j^{2\lambda/(1+\lambda)}\bigg)^{\ell},
\end{align*}
where the series can be shown to converge by the d'Alembert ratio test provided that
$$
\frac{2\lambda}{1+\lambda}\geq p\quad\text{and}\quad \frac{1}{2}<\lambda\leq 1.
$$
We can ensure that both conditions are satisfied by choosing
\begin{equation}
\lambda=\begin{cases}
\frac{p}{2-p}&\text{if}~p\in (2/3,1),\\
\frac{1}{2-2\varepsilon}~\text{for arbitrary}~\varepsilon\in(0,1/2)&\text{if}~p\in (0,2/3].
\end{cases}\label{eq:lambdaaffine}
\end{equation}
We conclude that by choosing the weights~\eqref{eq:qmcweightsaffine} and the parameter~\eqref{eq:lambdaaffine}, we obtain the QMC convergence rate
\begin{align*}
&\sqrt{\mathbb E_{\boldsymbol\Delta}\bigg\|\int_{\corr{[-1/2,1/2]^s}} u_{s,h}(\bsx,\bsy)\,{\rm d}\bsy-Q_{\rm ran}(u_{s,h}(\bsx,\cdot))\bigg\|_{L^2(D)}^2}\\
&=\begin{cases}
\mathcal O(n^{-1/p+1/2})&\text{if}~p\in (2/3,1),\\
\mathcal O(n^{-1+\varepsilon})~\text{for arbitrary}~\varepsilon\in(0,1/2)&\text{if}~p\in (0,2/3],
\end{cases}
\end{align*}
where the implied coefficient is independent of $s$ in both cases. This concludes the proof.
\end{proof}

\corr{{\em Remark.} Let $\mathcal G\!:V_h\to \mathbb R$ be a bounded linear functional. The same proof technique can be applied {\em mutatis mutandis} to show that there exists a generating vector constructed by the CBC algorithm using the weights specified in Theorem~\ref{thm:qmcweight} such that
$$
\sqrt{\mathbb E_{\boldsymbol\Delta}\bigg|\int_{[-1/2,1/2]^s}\mathcal G(u_{s,h}(\cdot,\bsy))\,{\rm d}\bsy-Q_{\rm ran}(\mathcal G(u_{s,h}))\bigg|^2}=\mathcal O(n^{\max\{-1/p+1/2,-1+\varepsilon\}}),
$$
where the implied coefficient is independent of the dimension $s$ with arbitrary $\varepsilon\in(0,1/2)$.
}

We note that the QMC convergence rate is at best essentially linear, and it is always better than $\mathcal O(n^{-1/2})$.

\subsection{Lognormal setting}
In the lognormal setting, we set
\begin{align}
\eta(\boldsymbol{y}):=\frac{(\max_{\boldsymbol{x}\in\overline{D}}a_0(\boldsymbol{x}))^2}{\min_{\boldsymbol{x}\in\overline{D}}a_0(\boldsymbol{x})}\exp\bigg(\sum_{j=1}^\infty 3\beta_j|y_j|\bigg),\label{eq:etaval}
\end{align}
\corr{where $\beta_j=\|\psi_j\|_{L^\infty}$ is defined as in Section~\ref{sec:notations}.} Then we have for all multi-indices $\bsm$ with $|\bsm|_{\infty}:=\sup_{j\geq 1}m_j\leq 1$ that
\begin{align*}
&\bigg\|\frac{\partial_\bsy^{\boldsymbol m}a(\cdot,\bsy)}{a(\cdot,\bsy)}\bigg\|_{L^\infty(D)}\leq \boldsymbol\beta^{\boldsymbol m}\quad\text{for all}~\bsy\in U_{\boldsymbol\beta},\\
&\bigg|\frac{\partial_{\boldsymbol y}^{\boldsymbol m}\eta(\boldsymbol y)}{\eta(\boldsymbol{y})}\bigg|\leq 3^{|\boldsymbol m|}\boldsymbol\beta^{\boldsymbol m}\quad\text{for a.e.}~\bsy\in U_{\boldsymbol\beta}
\end{align*}
(note that the second inequality holds almost everywhere since the derivatives of~\eqref{eq:etaval} are discontinuous on a set of measure zero)
and the recurrence relation~\eqref{EQ:dg_induc_step} takes the form
\begin{equation}\label{eq:recurrencerel}
 \| \partial_{\bsy}^{\bsnu} u_h(\cdot, \vec y) \|_{V_h} \leq C_\textup{DG} \sum_{\mathbf 0 \neq \vecm \le \bsnu} \begin{pmatrix} \bsnu \\ \vecm \end{pmatrix}(1+3^{|\bsm|})\boldsymbol\beta^{\bsm} \| \partial_{\bsy}^{\bsnu-\bsm} u_h(\cdot, \vec y) \|_{V_h},
\end{equation}
where we used Theorem \ref{TH:stability_derivative} with $C_\textup{DG} > 0$ which is independent of $\bsy$. In this case, the assumptions of Lemma~\ref{LEM:dg_stability} are satisfied since we use NIPG while the assumptions of Lemma~\ref{LEM:norm_equiv} are satisfied by construction, with $\tilde\tau=1$. In the ensuing analysis, we only consider the first order parametric regularity for simplicity.

\begin{lemma} Let assumptions {\rm (L1)}--{\rm (L5)} hold. It holds for all $\bsnu\in\mathscr F$ with $0\neq |\bsnu|_\infty\leq 1$ and a.e.~$\bsy\in U_{\boldsymbol \beta}$ that
\begin{align}
\|\partial^\bsnu_\bsy u_h(\cdot,\bsy)\|_{V_h}\leq C_{\rm DG}^{|\bsnu|}4^{|\bsnu|}\Lambda_{|\bsnu|}\boldsymbol\beta^{\bsnu}\|u_h(\cdot,\bsy)\|_{V_h},\label{eq:lognormalinductive}
\end{align}
where the sequence of \emph{ordered Bell numbers} $(\Lambda_k)_{k=0}^\infty$ is defined by the recursion
$$
\Lambda_0:=1\quad\text{and}\quad \Lambda_k:=\sum_{\ell=1}^k \binom{k}{\ell}\Lambda_{k-\ell},\quad k\geq 1.
$$
\end{lemma}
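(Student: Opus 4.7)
The proof proceeds by induction on $n := |\bsnu|$, and without loss of generality we assume $C_\textup{DG} \ge 1$ (if not, replace $C_\textup{DG}$ by $\max\{C_\textup{DG},1\}$, which only weakens the target bound). The hypothesis $|\bsnu|_\infty \le 1$ plays a key structural role: every $\bsm \le \bsnu$ is automatically binary, so $\binom{\bsnu}{\bsm}=1$, and the number of such $\bsm$ with $|\bsm|=k$ is exactly $\binom{n}{k}$. It is convenient to extend the claim trivially to $\bsnu=\mathbf 0$ (using $\Lambda_0 = 1$), so that the inductive hypothesis can be invoked on $\bsnu-\bsm$ even when $\bsm=\bsnu$. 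The base case $n=1$ is immediate: for $\bsnu=\boldsymbol e_j$ the only admissible $\bsm$ in \eqref{eq:recurrencerel} is $\bsm=\boldsymbol e_j$, giving $\|\partial_\bsy^{\boldsymbol e_j} u_h\|_{V_h} \le C_\textup{DG}(1+3)\beta_j\|u_h\|_{V_h} = 4C_\textup{DG}\Lambda_1 \beta_j\|u_h\|_{V_h}$.

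For the induction step, assume the stated inequality is known for all multi-indices of order strictly less than $n$. Apply \eqref{eq:recurrencerel} and plug in the inductive bound for $\|\partial_\bsy^{\bsnu-\bsm} u_h\|_{V_h}$; the factors $\boldsymbol\beta^\bsm$ and $\boldsymbol\beta^{\bsnu-\bsm}$ multiply to $\boldsymbol\beta^\bsnu$, and grouping the remaining terms by $k=|\bsm|$ produces
\begin{align*}
\|\partial_\bsy^\bsnu u_h(\cdot,\bsy)\|_{V_h} \le C_\textup{DG}\, \boldsymbol\beta^\bsnu \|u_h(\cdot,\bsy)\|_{V_h} \sum_{k=1}^{n} \binom{n}{k}(1+3^k)\, C_\textup{DG}^{n-k}\, 4^{n-k}\, \Lambda_{n-k}.
\end{align*}

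The closing step relies on the elementary inequality $1+3^k \le 4^k$, valid for every $k\ge 1$ (check: $1+3=4$, and the ratio $(1+3^k)/4^k$ is decreasing), combined with $C_\textup{DG}\ge 1$, which yields $C_\textup{DG}\,(1+3^k)\,C_\textup{DG}^{n-k}\,4^{n-k} \le C_\textup{DG}^{\,n}\,4^{\,n}$ term by term. Hence
\[
\|\partial_\bsy^\bsnu u_h(\cdot,\bsy)\|_{V_h} \le C_\textup{DG}^{\,n}\, 4^{\,n}\, \boldsymbol\beta^\bsnu\, \|u_h(\cdot,\bsy)\|_{V_h} \sum_{k=1}^{n} \binom{n}{k}\Lambda_{n-k} = C_\textup{DG}^{\,n}\, 4^{\,n}\, \Lambda_n\, \boldsymbol\beta^\bsnu\, \|u_h(\cdot,\bsy)\|_{V_h},
\]
where the final equality is precisely the recursion defining the ordered Bell numbers. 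No single step is genuinely difficult; the substantive observation is that $1+3^k \le 4^k$ supplies exactly the slack needed to absorb the constants from \eqref{eq:recurrencerel} into the target $4^{|\bsnu|}$, whereupon the combinatorics mesh with the $\Lambda_n$ recursion without further work.
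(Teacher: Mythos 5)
Your proof is correct and follows essentially the same route as the paper: induction on $|\bsnu|$ using the recurrence \eqref{eq:recurrencerel}, the estimate $1+3^k\le 4^k$ together with $C_\textup{DG}\ge 1$ to absorb constants, and the ordered Bell recursion to close the sum. The only cosmetic difference is that you exploit $|\bsnu|_\infty\le 1$ to count binary submultiindices directly, where the paper invokes the generalized Vandermonde identity $\sum_{|\bsm|=\ell}\binom{\bsnu}{\bsm}=\binom{|\bsnu|}{\ell}$ — the same combinatorial fact.
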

\begin{proof} We prove the claim by induction with respect to the modulus of $\bsnu$. The base step is resolved by observing that, for any $\bsnu=\boldsymbol e_k$, it follows from~\eqref{eq:recurrencerel} that
\begin{align*}
\|\partial^{\boldsymbol e_k}_\bsy u_h(\cdot,\bsy)\|_{V_h}&\leq C_{\rm DG}(1+3)\boldsymbol\beta^{\boldsymbol e_k}\|u_h(\cdot,\bsy)\|_{V_h}\\
&= C_{\rm DG}4^{|\boldsymbol e_k|}\Lambda_{|\boldsymbol e_k|}\corr{\boldsymbol\beta^{\boldsymbol e_k}}\|u_h(\cdot,\bsy)\|_{V_h},
\end{align*}
as desired.

Next, let $\bsnu\in\mathscr F$ be such that $0\neq |\bsnu|_\infty\leq 1$ and suppose that the claim holds for all multi-indices with order $< |\bsnu|$. Then~\eqref{eq:recurrencerel} implies that
\begin{align*}
&\|\partial^{\bsnu}_\bsy u_h(\cdot,\bsy)\|_{V_h}\\&\leq C_{\rm DG}\boldsymbol\beta^{\bsnu}\|u_h(\cdot,\bsy)\|_{V_h}\sum_{\mathbf 0\neq\bsm\leq \bsnu}\binom{\bsnu}{\bsm}(1+3^{|\bsm|}) C_{\rm DG}^{|\bsnu-\bsm|}4^{|\bsnu-\bsm|}\Lambda_{|\bsnu-\bsm|}\\
&\leq C_{\rm DG}^{|\bsnu|}4^{|\bsnu|}\boldsymbol\beta^{\bsnu}\|u_h(\cdot,\bsy)\|_{V_h}\sum_{\mathbf 0\neq \bsm\leq \bsnu}\binom{\bsnu}{\bsm}\Lambda_{|\bsnu-\bsm|},
\end{align*}
where we used $C_{\rm DG}\geq 1$ and the inequality $(1+3^{\ell})4^{n-\ell}\leq 4^n$ for all $1\leq \ell\leq n$. The claim follows by simplifying the remaining sum using the generalized Vandermonde identity:
\begin{align*}
\sum_{\mathbf 0\neq \bsm\leq \bsnu}\binom{\bsnu}{\bsm}\Lambda_{|\bsnu-\bsm|}&=\sum_{\ell=1}^{|\bsnu|}\Lambda_{|\bsnu|-\ell}\sum_{|\boldsymbol m|=\ell}\binom{\bsnu}{\bsm}=\sum_{\ell=1}^{|\bsnu|}\Lambda_{|\bsnu|-\ell}\binom{|\bsnu|}{\ell}=\Lambda_{|\bsnu|}.
\end{align*}
This concludes the proof.
\end{proof}

\begin{corollary} Let assumptions {\rm (L1)}--{\rm (L5)} hold. It holds for all $\bsnu\in\mathscr F$ with $0\neq |\bsnu|_\infty\leq 1$ and a.e.~$\bsy\in U_{\boldsymbol \beta}$ that
\begin{equation*}
\|\partial_{\bsy}^{\bsnu}u_h(\cdot,\bsy)\|_{V_h}\leq C_\textup{DG}^{|\bsnu|}4^{|\bsnu|}\frac{|\bsnu|!}{(\log 2)^{|\bsnu|}}\boldsymbol\beta^{\bsnu}\frac{\sigma}{\alpha a_{\min}(\bsy)^{1/2}}\|f\|_{L^2}.
\end{equation*}
\end{corollary}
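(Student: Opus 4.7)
The statement is an immediate consequence of the preceding lemma combined with two standard ingredients, so my plan is essentially to chain the appropriate estimates. First, I would invoke the previous lemma to get
$$
\|\partial^{\bsnu}_{\bsy}u_h(\cdot,\bsy)\|_{V_h}\leq C_{\rm DG}^{|\bsnu|}4^{|\bsnu|}\Lambda_{|\bsnu|}\boldsymbol\beta^{\bsnu}\|u_h(\cdot,\bsy)\|_{V_h},
$$
which already carries the correct dependence on $C_{\rm DG}$, $4^{|\bsnu|}$ and $\boldsymbol\beta^{\bsnu}$. It then remains to replace $\Lambda_{|\bsnu|}$ by $|\bsnu|!/(\log 2)^{|\bsnu|}$ and $\|u_h(\cdot,\bsy)\|_{V_h}$ by $\frac{\sigma}{\alpha\sqrt{a_{\min}(\bsy)}}\|f\|_{L^2}$.

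For the Fubini/ordered Bell numbers, I would recall the classical exponential generating function
$$
\sum_{k=0}^{\infty}\Lambda_k\frac{x^k}{k!}=\frac{1}{2-e^x},
$$
whose nearest singularity to the origin is the simple pole at $x=\log 2$. Since the coefficients are nonnegative, Pringsheim's theorem gives $\Lambda_k/k!\leq (\log 2)^{-k}$ for every $k\geq 0$, i.e.\ $\Lambda_{|\bsnu|}\leq |\bsnu|!/(\log 2)^{|\bsnu|}$. Substituting this into the inductive bound produces the factor $\frac{|\bsnu|!}{(\log 2)^{|\bsnu|}}$ required by the statement.

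Finally, to bound the baseline factor $\|u_h(\cdot,\bsy)\|_{V_h}$, I would apply the stability estimate of Lemma~\ref{LEM:dg_stability}. Our choice of $\eta(\bsy)$ in~\eqref{eq:etaval} satisfies $\eta(\bsy)\geq a_{\min}(\bsy)$ for a.e.\ $\bsy\in U_{\boldsymbol\beta}$ (by construction it dominates $(\min_{\bsx\in\overline D}a_0(\bsx))\exp(-\sum_j\beta_j|y_j|)\geq a_{\min}(\bsy)$), and the NIPDG choice $\theta=1$ ensures that the hypothesis~\eqref{EQ:penalty_condition} holds trivially. Hence identity~\circlearound{B} in Lemma~\ref{LEM:dg_stability} gives
$$
\|u_h(\cdot,\bsy)\|_{V_h}\leq \frac{\sigma}{\alpha\sqrt{a_{\min}(\bsy)}}\|f\|_{L^2(D)}.
$$
Multiplying through yields the claim. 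There is no real obstacle here; the only point that requires a moment of care is checking that~\eqref{eq:etaval} indeed meets the threshold $\eta(\bsy)\geq a_{\min}(\bsy)$ a.e., which is immediate from the second argument of the maximum in the definition of $\eta(\bsy)$.
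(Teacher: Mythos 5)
Your overall chain is exactly the paper's proof: take the inductive bound \eqref{eq:lognormalinductive}, replace $\Lambda_{|\bsnu|}$ by $|\bsnu|!/(\log 2)^{|\bsnu|}$, and replace $\|u_h(\cdot,\bsy)\|_{V_h}$ by the {\em a priori} bound \eqref{EQ:dg_stability}, using that the choice \eqref{eq:etaval} gives $\eta(\bsy)\ge a_{\min}(\bsy)$ (indeed the second argument of the maximum \emph{is} $a_{\min}(\bsy)$) and that $\theta=1$ makes \eqref{EQ:penalty_condition} vacuous. All of that matches Lemma~\ref{LEM:dg_stability} and the surrounding discussion in the paper.

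The one genuine flaw is your justification of $\Lambda_k\le k!/(\log 2)^k$. Pringsheim's theorem only tells you that the dominant singularity of $\sum_k \Lambda_k x^k/k! = 1/(2-e^x)$ lies on the positive real axis, i.e.\ that the radius of convergence is $\log 2$; this yields $\limsup_k(\Lambda_k/k!)^{1/k}=1/\log 2$, which is an asymptotic growth rate, not the pointwise bound $\Lambda_k/k!\le(\log 2)^{-k}$ for \emph{every} $k$. (Even the elementary coefficient estimate $\Lambda_k/k!\le (2-e^r)^{-1}r^{-k}$ for $r<\log 2$ falls short, since the prefactor blows up as $r\to\log 2$; establishing the clean bound from the generating function requires the full singularity analysis plus a check of small $k$.) The paper sidesteps this by citing the estimate from the literature; alternatively, you can prove it in two lines by induction on $k$ using the recursion defining the ordered Bell numbers: assuming $\Lambda_{k-\ell}\le (k-\ell)!/(\log 2)^{k-\ell}$,
\begin{equation*}
\Lambda_k=\sum_{\ell=1}^k\binom{k}{\ell}\Lambda_{k-\ell}
\le \frac{k!}{(\log 2)^k}\sum_{\ell=1}^k\frac{(\log 2)^\ell}{\ell!}
\le \frac{k!}{(\log 2)^k}\big(e^{\log 2}-1\big)=\frac{k!}{(\log 2)^k}.
\end{equation*}
With that substitution (or a citation), the rest of your argument stands.
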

\begin{proof} The claim follows by plugging the {\em a priori} bound~\eqref{EQ:dg_stability} into~\eqref{eq:lognormalinductive} and using the well-known estimate~\cite[Lem.~A.3]{bell}:
 \begin{equation*}
  \Lambda_{|\bsnu|}\leq \frac{|\bsnu|!}{(\log 2)^{|\bsnu|}},
 \end{equation*}
 which yields the assertion.
\end{proof}

\subsection{Derivation of QMC error in the lognormal setting}\label{sec:qmcweight}
In analogy to Section~\ref{sec:unifqmc}, the dimensionally-truncated solution is given by
$$
u_{s,h}(\cdot,\bsy):=u_h(\cdot,(y_1,\ldots,y_s,0,0,\ldots)),\quad \bsy\in \mathbb R^s.
$$
\begin{theorem}\label{thm:qmcweight2}
Let assumptions {\rm (L1)}--{\rm (L5)} hold and let $\eta(\bsy)$ be chosen as in~\eqref{eq:etaval}. There exists a generating vector constructed by the CBC algorithm such that
\begin{align*}
&\sqrt{\mathbb E_{\boldsymbol\Delta}\bigg\|\int_{\mathbb R^s}u_{s,h}(\cdot,\bsy)\prod_{j=1}^s\corr{\varphi}(y_j)\,{\rm d}\bsy-Q_{\rm ran}^{\varphi}(u_{s,h})\bigg\|_{L^2(D)}^2}\\
&=\begin{cases}
\mathcal O(n^{-1/p+1/2})&\text{if}~p\in (2/3,1),\\
\mathcal O(n^{-1+\varepsilon})~\text{for arbitrary}~\varepsilon\in(0,1/2)&\text{if}~p\in (0,2/3],
\end{cases}
\end{align*}
where the implied coefficient is independent of $s$ in both cases, when the weights $(\gamma_{\setu})_{\setu\subseteq\{1:s\}}$ in Lemma~\ref{lemma:affineqmc} are chosen to be
$$
\gamma_{\setu}=\bigg(|\setu|!\prod_{j\in\setu}\frac{\beta_j}{2(\log 2)\exp(\beta_j^2/2)\Phi(\beta_j)\sqrt{(\alpha_j-\beta_j)\corr{\varrho}_j(\lambda)}}\bigg)^{2/(1+\lambda)},
$$
where
$$
\alpha_j=\frac12\bigg(\beta_j+\sqrt{\beta_j^2+1-\frac{1}{2\lambda}}\bigg)
$$
and
$$
\lambda=\begin{cases}
\frac{p}{2-p}&\text{if}~p\in (2/3,1),\\
\frac{1}{2-2\varepsilon}~\text{for arbitrary}~\varepsilon\in(0,1/2)&\text{if}~p\in (0,2/3].
\end{cases}%
$$
\end{theorem}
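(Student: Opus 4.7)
The plan is to mirror the proof of Theorem~\ref{thm:qmcweight}, but now carefully tracking the $\bsy$-dependence that arises through $a_{\min}(\bsy)$ and $\eta(\bsy)$ in the lognormal setting. I would apply the lognormal QMC lemma pointwise in $\bsx \in D$ to the integrand $\bsy \mapsto u_{s,h}(\bsx,\bsy)$, square both sides, integrate over $D$, and use Fubini to move $\int_D$ inside. The crucial step is that inside the weighted Sobolev norm, Minkowski's integral inequality yields
$$
\bigg\|\int_{\mathbb{R}^{s-|\setu|}}\partial_\bsy^{\mathbf{1}_\setu}u_{s,h}(\cdot,\bsy)\prod_{j\notin\setu}\varphi(y_j)\,{\rm d}\bsy_{-\setu}\bigg\|_{L^2(D)}\leq \int_{\mathbb{R}^{s-|\setu|}}\|\partial_\bsy^{\mathbf{1}_\setu}u_{s,h}(\cdot,\bsy)\|_{L^2(D)}\prod_{j\notin\setu}\varphi(y_j)\,{\rm d}\bsy_{-\setu},
$$
reducing the problem to bounding the $L^2(D)$ norm of the first-order mixed parametric derivatives of $u_{s,h}$ pointwise in $\bsy$. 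Only the first-order regularity bound derived above is needed, consistent with the first-order mixed partials in the weighted Sobolev norm.

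Next, I would exploit that the choice~\eqref{eq:etaval} of $\eta(\bsy)$ enforces $\eta(\bsy) \geq a_{\min}(\bsy)$, so~\eqref{EQ:discrete_poincare} simplifies to $C_{\rm Poin}^{V_h}(\bsy) = \sigma/\sqrt{a_{\min}(\bsy)}$. Combined with the preceding corollary, this gives
$$
\|\partial_\bsy^{\mathbf{1}_\setu}u_{s,h}(\cdot,\bsy)\|_{L^2(D)} \leq \frac{\sigma^2\|f\|_{L^2(D)}}{\alpha\,a_{\min}(\bsy)}\cdot\frac{(4C_{\rm DG})^{|\setu|}|\setu|!}{(\log 2)^{|\setu|}}\prod_{j\in\setu}\beta_j,
$$
and by (L2) one has $1/a_{\min}(\bsy) \leq \exp\bigl(\sum_{j \geq 1}|y_j|\beta_j\bigr)/\min_{\bsx \in \overline{D}}a_0(\bsx)$. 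Substituting into the Minkowski estimate and squaring reduces the problem to elementary Gaussian integrals.

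The identities $\int_{\mathbb{R}}e^{|y|\beta_j}\varphi(y)\,{\rm d}y = 2e^{\beta_j^2/2}\Phi(\beta_j)$ (for $j \notin \setu$) and $\int_{\mathbb{R}}e^{2(\beta_j-\alpha_j)|y|}\,{\rm d}y = 1/(\alpha_j-\beta_j)$ (for $j \in \setu$, provided $\alpha_j > \beta_j$) combine to give the explicit bound. The infinite product $\prod_{j \geq 1}(2e^{\beta_j^2/2}\Phi(\beta_j))^2$ converges uniformly in $s$ because $\log(2e^{\beta_j^2/2}\Phi(\beta_j)) = O(\beta_j)$ as $\beta_j \to 0$ and $\sum_j \beta_j < \infty$ by (L1). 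Collecting everything, the $D$-integrated squared weighted Sobolev norm of $u_{s,h}$ is majorized up to a dimension-independent constant by $\sum_\setu \gamma_\setu^{-1}(|\setu|!)^2\prod_{j\in\setu}A_j$ with
$$
A_j := \frac{(4C_{\rm DG}/\log 2)^2\,\beta_j^2}{(2e^{\beta_j^2/2}\Phi(\beta_j))^2(\alpha_j-\beta_j)}.
$$

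Finally, minimizing over $\gamma_\setu$ via the standard argument (cf.~\cite[Lem.~6.2]{kss12}) gives the stated weights, with the multiplicative constant $4C_{\rm DG}/\log 2$ absorbed into the overall prefactor. The optimal $\alpha_j$ is then found by minimizing $A_j^{\lambda/(1+\lambda)}\varrho_j(\lambda)^{1/(1+\lambda)}$; differentiating yields the quadratic $\alpha_j^2 - \beta_j\alpha_j - \eta_\ast/2 = 0$, whose positive root is precisely the stated $\alpha_j = \tfrac{1}{2}(\beta_j + \sqrt{\beta_j^2 + 1 - 1/(2\lambda)})$. Dimension-independent convergence of the resulting bounding series follows by the same d'Alembert ratio test employed in Theorem~\ref{thm:qmcweight}, requiring $\lambda \in (1/2, 1]$ and $2\lambda/(1+\lambda) \geq p$: the latter uses $A_j \sim \beta_j^2$ and the boundedness of $\varrho_j(\lambda)$ at the optimal $\alpha_j$, together with (L3). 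The stated values of $\lambda$ satisfy both constraints and yield the claimed rates. I expect the main technical hurdle to be the coupled optimization of $\alpha_j$: the constraint $\alpha_j > \beta_j$ is necessary for integrability against $1/a_{\min}(\bsy)$, but too large $\alpha_j$ inflates $\varrho_j(\lambda) \sim \exp(\lambda\alpha_j^2/\eta_\ast)$, so the optimal balance must be exhibited rather than guessed.
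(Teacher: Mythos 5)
Your proposal is correct and follows essentially the same route as the paper's proof: pointwise application of the lognormal QMC lemma, Fubini over $D$, the discrete Poincar\'e inequality combined with the first-order regularity corollary and the bound $1/a_{\min}(\bsy)\lesssim\prod_{j\geq1}\exp(\beta_j|y_j|)$, the Gaussian/exponential integrals under the constraint $\alpha_j>\beta_j$, and the standard weight and $\alpha_j$ optimization (which the paper merely cites as ``analogous to the continuous lognormal setting''). The only cosmetic deviations are your use of Minkowski's integral inequality where the paper applies Cauchy--Schwarz/Jensen with respect to the Gaussian density (giving $(2\exp(\beta_j^2/2)\Phi(\beta_j))^2$ in place of the paper's $2\exp(2\beta_j^2)\Phi(2\beta_j)$, immaterial for the rate) and your explicit derivation of the quadratic determining the optimal $\alpha_j$.
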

\begin{proof} We note  that
$$
\frac{1}{a_{\min}(\bsy)^{1/2}}\leq \frac{1}{\min_{\bsx\in \overline{D}}a_0(\bsx)^{1/2}}\prod_{j\geq 1}\exp(\tfrac12 \beta_j|y_j|)\quad\text{for all}~\bsy\in U_{\boldsymbol\beta},
$$
which yields the upper bound
$$
\bigg\|\frac{\partial^{|\setu|}}{\partial \bsy_{\setu}}u_{s,h}(\cdot,\bsy)\bigg\|_{V_h}^2\lesssim {\widehat C_{\rm DG}}^{2|\setu|}\frac{(|\setu|!)^2}{(\log 2)^{2|\setu|}}\bigg(\prod_{j\in\setu}\beta_j^2\bigg)\prod_{j\geq 1}\exp(\beta_j|y_j|),
$$
where $\widehat C_{\rm DG}:=4C_{\rm DG}$.

Since our definition of $\eta(\bsy)$ ensures that $\eta(\bsy)\geq a_{\min}(\bsy)$, we obtain
\begin{align*}
&\int_D \|u_{s,h} (\bsx,\cdot)\|_{s,\boldsymbol\gamma}^2\,{\rm d}\bsx \\
&=\sum_{\setu\subseteq\{1:s\}}\frac{1}{\gamma_{\setu}}\int_{\mathbb R^{|\setu|}}\int_D \bigg(\int_{\mathbb R^{s-|\setu|}}\frac{\partial^{|\setu|}}{\partial \bsy_{\setu}}u(\bsx,\bsy)\prod_{j\not\in\setu}\corr{\varphi}(y_j)\,{\rm d}\bsy_{-\setu}\bigg)^2\prod_{j\in\setu}\varpi_j^2(y_j)\,{\rm d}\bsy_{\setu}\\
&\leq \sum_{\setu\subseteq\{1:s\}}\frac{1}{\gamma_{\setu}}\int_{\mathbb R^{s}}\bigg\|\frac{\partial^{|\setu|}}{\partial \bsy_{\setu}}u(\cdot,\bsy)\bigg\|_{L^2(D)}^2\prod_{j\not\in\setu}\corr{\varphi}(y_j)\prod_{j\in\setu}\varpi_j^2(y_j)\,{\rm d}\bsy\\
&\leq \sum_{\setu\subseteq\{1:s\}}\frac{1}{\gamma_{\setu}}\int_{\mathbb R^{s}}{C_{\rm Poin}^{V_h}(\bsy)}^2\bigg\|\frac{\partial^{|\setu|}}{\partial \bsy_{\setu}}u(\cdot,\bsy)\bigg\|_{V_h}^2\prod_{j\not\in\setu}\corr{\varphi}(y_j)\prod_{j\in\setu}\varpi_j^2(y_j)\,{\rm d}\bsy\\
&\lesssim \sum_{\setu\subseteq\{1:s\}}\frac{{\widehat C_{\rm DG}}^{2|\setu|}}{\gamma_{\setu}}(|\setu|!)^2\bigg(\prod_{j\in \setu}\frac{\beta_j}{\log 2}\bigg)^2\\
&\qquad\times \int_{\mathbb R^{s}}\prod_{j=1}^s\exp\big(2\beta_j|y_j|)\prod_{j\not\in\setu}\corr{\varphi}(y_j)\prod_{j\in\setu}\varpi_j^2(y_j)\,{\rm d}\bsy\corr{,}
\end{align*}
\corr{where we used $C_{\rm Poin}^{V_h}(\bsy)\leq \frac{\sigma}{\sqrt{a_{\min}(\bsy)}}$.} Thus
 \begin{align*}
 &\int_D \|u_{s,h}(\bsx,\cdot)\|_{s,\boldsymbol\gamma}^2\,{\rm d}\bsx\\
&\leq \sum_{\setu\subseteq\{1:s\}}\frac{{\widehat C_{\rm DG}}^{2|\setu|}}{\gamma_{\setu}}(|\setu|!)^2\bigg(\prod_{j\in \setu}\frac{\beta_j}{\log 2}\bigg)^2\\
&\qquad\times \bigg(\prod_{j\not\in\setu}\underset{\leq 2\exp(2\beta_j^2)\Phi(2\beta_j)}{\underbrace{\int_{\mathbb R}\exp(2 \beta_j|y_j|)\corr{\varphi}(y_j)\,{\rm d}y_j}}\bigg)\bigg(\prod_{j\in\setu}\int_{\mathbb R}\exp(2\beta_j|y_j|)\varpi_j^2(y_j)\,{\rm d}y_j\bigg)\\
&\leq \sum_{\setu\subseteq\{1:s\}}\frac{{\widehat C_{\rm DG}}^{2|\setu|}}{\gamma_{\setu}}(|\setu|!)^2\bigg(\prod_{j=1}^s2\exp(2\beta_j^2)\Phi(2\beta_j)\bigg)\\
&\qquad\times\bigg(\prod_{j\in\setu}\frac{\beta_j^2}{2(\log2)^2\exp(2\beta_j^2)\Phi(2\beta_j)}\int_{\mathbb R}\exp(2\beta_j|y_j|)\varpi_j^2(y_j)\,{\rm d}y_j\bigg).
\end{align*}
Recalling that $\varpi_j(y)=\exp(\corr{-}\alpha_j|y|)$, the remaining integral is bounded as long as we choose
\begin{align}
\alpha_j>\beta_j.\label{eq:alphacond}
\end{align}

In fact, if~\eqref{eq:alphacond} holds, then
$$
\int_{\mathbb R}\exp(2\beta_j|y_j|)\varpi_j^2(y_j)\,{\rm d}y_j=\frac{1}{\alpha_j-\beta_j}
$$and we obtain
 \begin{align*}
 \int_D \|u_{s,h}(\bsx,\cdot)\|_{s,\boldsymbol\gamma}^2\,{\rm d}\bsx &\leq \sum_{\setu\subseteq\{1:s\}}\frac{{\widehat C_{\rm DG}}^{2|\setu|}}{\gamma_{\setu}}(|\setu|!)^2\bigg(\prod_{j=1}^s2\exp(2\beta_j^2)\Phi(2\beta_j)\bigg)\\
&\qquad\times\bigg(\prod_{j\in\setu}\frac{\beta_j^2}{2(\log2)^2\exp(2\beta_j^2)\Phi(2\beta_j)(\alpha_j-\beta_j)}\bigg).
 \end{align*}
The remainder of the argument is completely analogous to the derivation presented for the continuous setting in~\cite{log}: \corr{the weights $\gamma_{\setu}$ enter the expression for the upper bound in the same manner as in the continuous setting, leading us to conclude that} the error criterion is minimized by setting
$$
\alpha_j=\frac12\bigg(\beta_j+\sqrt{\beta_j^2+1-\frac{1}{2\lambda}}\bigg)
$$
and choosing the weights
$$
\gamma_{\setu}=\bigg(|\setu|!\prod_{j\in\setu}\frac{\beta_j}{2(\log 2)\exp(\beta_j^2/2)\Phi(\beta_j)\sqrt{(\alpha_j-\beta_j)\corr{\varrho}_j(\lambda)}}\bigg)^{2/(1+\lambda)},
$$
with $\lambda$ chosen as in~\eqref{eq:lambdaaffine} and $p$ as in (L3).
\end{proof}

\corr{{\em Remark.} Let $\mathcal G\!:V_h\to \mathbb R$ be a bounded linear functional. Similarly to the affine and uniform setting, it can be shown that there exists a generating vector constructed by the CBC algorithm using the weights specified in Theorem~\ref{thm:qmcweight2} such that
$$
\sqrt{\mathbb E_{\boldsymbol\Delta}\bigg|\int_{\mathbb R^s}\mathcal G(u_{s,h}(\cdot,\bsy))\prod_{j=1}^s\corr{\varphi}(y_j)\,{\rm d}\bsy-Q_{\rm ran}(\mathcal G(u_{s,h}))\bigg|^2}=\mathcal O(n^{\max\{-1/p+1/2,-1+\varepsilon\}}),
$$
where the implied coefficient is independent of the dimension $s$ with arbitrary $\varepsilon\in(0,1/2)$.
}

\section{Numerical results}\label{SEC:numerics}
%
We consider \eqref{eq:uncertainpde} with $f(\vec x) = x_1$ in $D = (0,1)^2$ and investigate the errors in the means of the numerical approximations to the unknown $u$. For the affine case, we set $U = [-\tfrac{1}{2},\frac{1}{2}]^{\mathbb N}$ and truncate the series expansion for the input random coefficient into $s=100$ terms, i.e.,
\begin{equation*}
 a_\textup{affine}(\vec x,\vec y) = 5 + \sum_{j=1}^{100} \frac{y_j}{(k_j^2 + \ell_j^2)^{1.3}} \sin(k_j \pi x_1) \sin(\ell_j \pi x_2),
\end{equation*}
where $(k_j, \ell_j)_{j \ge 1}$ is an ordering of elements of $\mathbb Z_+ \times \mathbb Z_+$ such that the sequence $(\|\psi_j\|_{L^\infty(D)})_{j \ge 1}$ is not increasing. In the lognormal case, we define $U = \mathbb R^{\mathbb N}$ and consider the dimensionally-truncated coefficient with $s=100$ terms
\begin{equation*}
 a_\textup{lognormal}(\vec x, \vec y) = \exp\left( \sum_{j=1}^{100} \frac{y_j}{(k_j^2 + \ell_j^2)^{1.3}} \sin(k_j \pi x_1) \sin(\ell_j \pi x_2) \right)
\end{equation*}
with an analogously defined sequence $(k_j, \ell_j)_{j \ge 1}$. In this case, we have that $\|\psi_j\|_{L^\infty(D)}\sim j^{-1.3}$ and the expected convergence independently of the dimension is $\mathcal O(n^{-0.8+\varepsilon})$, $\varepsilon>0$.

We use in all experiments an off-the-shelf generating vector~\cite[lattice-39101-1024-1048576.3600]{lattice} using a total amount of 16384, 32768, 65536, 131072, 262144, and 524288 cubature points with $R=16$ random shifts. \corr{Although this generating vector has not been obtained using the CBC algorithm with the weights derived in Theorems~\ref{thm:qmcweight} and~\ref{thm:qmcweight2}, we found this off-the-shelf generating vector to perform well yielding the optimal rate of convergence. Thus, this confirms our analytical findings without excluding that there might be even better lattice rules.} In practice, the off-the-shelf lattice rule has comparable performance to tailored lattice rules. The finite element discretization uses discontinuous Galerkin methods (the implementation of which is based on the `Finite Element Simulation Toolbox for UNstructured Grids: FESTUNG' \cite{ReuterHRFAK20}) on a grid of mesh size $\tfrac{1}{16}$. Additionally, the DG methods are compared to a conforming finite element implementation on $30 \times 30$ elements.

Thus, the spatial grid of the discontinuous Galerkin method is significantly coarser than the conforming finite element grid. We do so to underline that it is generally considered `unfair' to compare discontinuous Galerkin and conforming finite elements on the same grid, since DG usually has many more degrees of freedom. There are several ways to compensate this imbalance (count degrees of freedom, number non-zero entries in the stiffness matrix, \dots). However, we will see that the accuracy of the DG and conforming element methods is of minor importance in the numerical experiments. Thus, we skip a detailed discussion about a fair comparison.

\begin{figure} \centering
\begin{tikzpicture}
  \begin{axis}[
   xmode   = log,
   ymode   = log,
   ylabel  = R.M.S. error,
   xlabel  = Total number of cubature points,
   height  = 0.45\linewidth,
   width   = 0.45\linewidth,
   domain  = 16*1000:16*33000,
   samples = 5
   ]
   \addplot[red, only marks, mark=otimes] table {matlab_output/affine_symparam-1_eta100.txt};
   \addplot[red, mark=none] {0.000252565907890 * x^(-1.099034898308349)};
   \addplot[blue, only marks, mark=otimes] table {matlab_output/affine_symparam1_eta10.txt};
   \addplot[blue, mark=none] {0.000243210149904 * x^(-1.098598996123883)};
   \addplot[green, only marks, mark=otimes] table {matlab_output/affine_symparam-1_eta10.txt};
   \addplot[green, mark=none] {0.000951130781053 * x^(-0.353534266830121)};
   \addplot[black, only marks, mark=otimes] table {matlab_output/affine_fem.txt};
   \addplot[black, dashed, mark=none] {0.000251076120179 * x^(-1.098656383537929)};
  \end{axis}
 \end{tikzpicture}
 \hfill
 \begin{tikzpicture}[spy using outlines={circle, magnification=5, size=0.3cm, connect spies}]
  \begin{axis}[
   xmode   = log,
   ymode   = log,
   ylabel  = R.M.S. error,
   xlabel  = Total number of cubature points,
   height  = 0.45\linewidth,
   width   = 0.45\linewidth,
   domain  = 16*1000:16*33000,
   samples = 5
   ]
   \addplot[red, only marks, mark=x] table {matlab_output/affine_symparam-1_eta100.txt};
   \addplot[red, mark=none] {0.000252565907890 * x^(-1.099034898308349)};
   \addplot[blue, only marks, mark=x] table {matlab_output/affine_symparam1_eta10.txt};
   \addplot[blue, mark=none] {0.000243210149904 * x^(-1.098598996123883)};
   \addplot[black, only marks, mark=+] table {matlab_output/affine_fem.txt};
   \addplot[black, dashed, mark=none] {0.000251076120179 * x^(-1.098656383537929)};
   \begin{scope}
    \spy[black,size=2cm] on (3.2,1.33) in node [fill=white] at (3.2,3.2);
   \end{scope}
  \end{axis}
 \end{tikzpicture}
 \caption{Root mean squared error for the affine case for first order polynomial DG methods. The NIPG method with $\eta = 10$ is depited blue, while the SIPG method with $\eta = 100$ is depicted red and the SIPG method with $\eta = 10$ is depicted green, and the conforming finite element solution is black. The green graph is omitted in the right picture.}\label{FIG:affine results}
\end{figure}
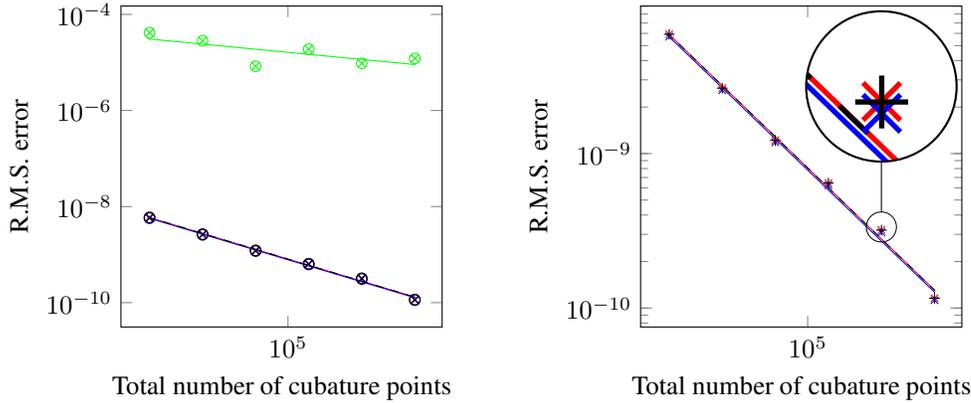
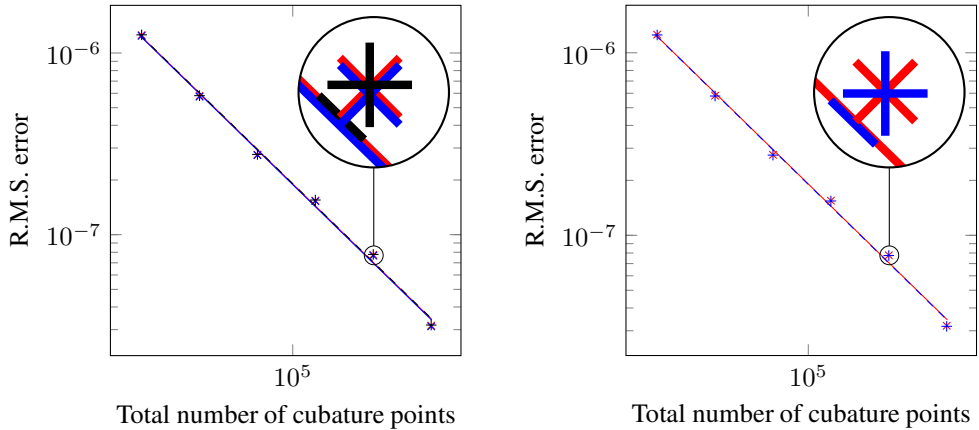
\begin{figure} \centering
 \begin{tikzpicture}[spy using outlines={circle, magnification=8, size=0.3cm, connect spies}]
  \begin{axis}[
   xmode   = log,
   ymode   = log,
   ylabel  = R.M.S. error,
   xlabel  = Total number of cubature points,
   height  = 0.48\linewidth,
   width   = 0.48\linewidth,
   domain  = 16*1000:16*33000,
   samples = 5
   ]
   \addplot[red, only marks, mark=x] table {matlab_output/lognormal_symparam-1_eta10.txt};
   \addplot[red, mark=none] {0.027367855497638 * x^(-1.031589222967223)};
   \addplot[blue, only marks, mark=x] table {matlab_output/lognormal_symparam1_eta10.txt};
   \addplot[blue, mark=none] {0.027287093535884 * x^(-1.032158554376227)};
  \addplot[black, only marks, mark=+] table {matlab_output/lognormal_fem.txt};
  \addplot[black, dashed, mark=none] {0.027201590399422 * x^(-1.030671255419436)};
  \begin{scope}
    \spy[black,size=2cm] on (3.5,1.33) in node [fill=white] at (3.5,3.5);
  \end{scope}
  \end{axis}
 \end{tikzpicture}
 \hfill
 \begin{tikzpicture}[spy using outlines={circle, magnification=8, size=0.3cm, connect spies}]
  \begin{axis}[
   xmode   = log,
   ymode   = log,
   ylabel  = R.M.S. error,
   xlabel  = Total number of cubature points,
   height  = 0.48\linewidth,
   width   = 0.48\linewidth,
   domain  = 16*1000:16*33000,
   samples = 5
   ]
  \addplot[red, only marks, mark=x] table {matlab_output/lognormal_p2_symparam-1_eta10.txt};
  \addplot[red, mark=none] {0.026802473113557 * x^(-1.029429354206398)};
  \addplot[blue, only marks, mark=+] table {matlab_output/lognormal_p2_symparam1_eta10.txt};
  \addplot[blue, dashed, mark=none] {0.026885750165130 * x^(-1.030076355729698)};
  \begin{scope}
    \spy[black,size=2cm] on (3.5,1.33) in node [fill=white] at (3.5,3.5);
  \end{scope}
  \end{axis}
 \end{tikzpicture}
 \caption{Root mean squared error for the lognormal case. The NIPG method with $\eta = 10$ is depicted in red, the SIPG method with $\eta = 10$ is depicted in blue, and the conforming finite element solution is black. The left picture illustrates DG and conforming finite elements for locally linear polynomial approximation, while the second picture shows only DG for a locally quadratic approximation.}\label{FIG:lognormal results}
\end{figure}

Figure \ref{FIG:affine results} shows the numerical results for the affine case and locally linear DG or conforming finite element approximation. We recognize that the green plot for the SIPG method with $\eta = 10$ does not show the expected convergence behavior (with respect to the number of cubature points). This nicely resembles our analysis, which highlights that SIPG needs a large enough $\eta$ to work stably, while the NIPG method is stable for all $\eta > 0$. We clearly see that SIPG (with $\eta = 100$, red), NIPG (with $\eta = 10$, blue), and conforming linear finite elements (black) all show a very similar convergence rate (between 1.098 and 1.1) in the number of cubature points indicating that all three methods work similarly well in the affine case.

In Figures~\ref{FIG:affine results} and~\ref{FIG:lognormal results} we recognize that the error plots of all stable methods are almost identical. Thus we conclude that the cubature error dominates the discretization error and hence the particular choice of the discretization appears to be almost irrelevant. 

Figure \ref{FIG:lognormal results} deals with the lognormal case. The left picture uses linear approximations of SIPG, NIPG, and the conforming finite element method, while the right picture shows the results for second order SIPG and NIPG. In the left picture, we see that, again, all three methods work fine, and similarly well. Their convergence rates are approx.\ 1.03 with only insignificant differences. Interestingly, we see similar convergence rates and absolute errors for the second order SIPG and NIPG approximations. Thus, we deduce that the influence of the accuracy of the conforming finite element or DG approximation of the PDE does not significantly influence the convergence behavior (not even the constants) of the QMC method in our cases.

\corr{Notably, the NIPG method is robust concerning $\eta$, and we do not see visual differences in the error plots if we choose $\eta(\vec y)$ as described in the remark in Section \ref{sec:stability}.}

\section{Conclusion}
We have designed QMC cubatures using DG discretizations of an elliptic PDE with a random coefficient. Our analytical and numerical findings are consistent with the results derived from the conforming approximations in the literature.

\corr{In this study, we assumed the data to allow for solutions $u \in H^2$ concerning the spatial variable. At the cost of some technical but standard extensions of the DG analysis, we can extend our results to the case that $u \in H^{3/2+\epsilon}$ in space. Moreover, using discrete gradients and lifting operators, we believe our results can be transferred to the case of $u$ having minimal spatial regularity. We dropped extensive discussions on spatial regularity, as detailed in the DG literature. For QMC methods, the advantages of DG are its more regular execution patterns and the additional local mass conservation. In that sense, it preserves more physics than continuous finite element methods. However, we are not aware of any indications that DG works under milder regularity assumptions than continuous finite elements.}

\section*{Acknowledgements}%

\corr{A.\ Rupp has been supported by the Academy of Finland's grant number 350101 \emph{Mathematical models and numerical methods for water management in soils}, grant number 354489 \emph{Uncertainty quantification for PDEs on hypergraphs}, grant number 359633 \emph{Localized orthogonal decomposition for high-order, hybrid finite elements}, Business Finland's project number 539/31/2023 \emph{3D-Cure: 3D printing for personalized medicine and customized drug delivery}, and the Finnish \emph{Flagship of advanced mathematics for sensing, imaging and modelling}, decision number 358944.}

\bibliography{references}
\bibliographystyle{siam}

\end{document}